\newtheorem{thm}{Theorem}
\newtheorem{lem}[thm]{Lemma}
\newtheorem{prop}[thm]{Proposition}
\newtheorem{defn}[thm]{Definition}
\newtheorem{cor}[thm]{Corollary}
\newtheorem{rem}[thm]{Remark}
\newtheorem{example}[thm]{Example}
\begin{document}

\author{Liana David and Claus Hertling}

\date{}

\title{Regular  $F$-manifolds: initial conditions and  Frobenius metrics}

\maketitle

\textbf{Abstract:} A regular $F$-manifold is an $F$-manifold (with
Euler field) $(M, \circ , e, E)$, such that the endomorphism
$\mathcal U(X):=E\circ X$ of $TM$  is regular at any $p\in M.$ We
prove that the germ  $((M,p),  \circ ,e, E)$ is uniquely
determined  (up to isomorphism) by the conjugacy class of
${\mathcal U}_{p}: T_{p} M \rightarrow T_{p}M$. We obtain that any
regular $F$-manifold admits a preferred system of local
coordinates and we find conditions, in these coordinates, for a
metric to be Frobenius. We study the Lie algebra of infinitesimal
symmetries of regular $F$-manifolds. We show that any regular
$F$-manifold is locally isomorphic to the parameter space of a
Malgrange universal connection. We prove an initial condition
theorem for Frobenius metrics on regular $F$-manifolds.\\

{\bf Key words}: (regular) $F$-manifolds, Frobenius metrics,
canonical coordinates, generalized Darboux-Egoroff
equations, infinitesimal symmetries, meromorphic connections.\\

{\bf 2010 MS Classification}: 32B10, 32G99, 53Z05, 53D45.

\section{Introduction}

Frobenius manifolds were defined in \cite{dubrovin}, by Boris
Dubrovin, as a geometrization of the so called
Witten-Dijkgraaf-Verlinde-Verlinde (WDVV)-equations and appear in
many  areas of mathematics (quantum cohomology, singularity
theory, integrable systems etc). Later on, the weaker notion of
$F$-manifold was introduced in the literature by Hertling and
Manin \cite{hert-man} and was intensively studied since then (see
e.g. \cite{liana,hert-book,lorenzoni,merkulov,strachan}). Rather
than the usual definition of Frobenius manifolds \cite{dubrovin},
we prefer the alternative one \cite{hert-book} where Frobenius
manifolds are viewed as an enrichment of $F$-manifolds.

\begin{defn}\label{definitii} i) An $F$-manifold is a manifold $M$ together with a
(fiber preserving) commutative, associative multiplication $\circ$
on $TM$, with unit field $e$, and an additional field $E$ (called
the Euler field), such that the following conditions hold:
\begin{equation}\label{integr}
L_{X\circ Y} (\circ ) = X\circ L_{Y}(\circ ) + Y\circ L_{X}(\circ
)
\end{equation}
and
$$
L_{E}(\circ )(X, Y)= X\circ Y,
$$
for any vector fields $X, Y\in {\mathcal T}_{M}$.\

ii) A Frobenius manifold is an $F$-manifold $(M ,\circ , e, E)$
together with a (non-degenerate) flat, multiplication invariant
metric $g$ (i.e. $g(X\circ Y, Z) = g(X, Y\circ Z)$, for any $X, Y,
Z\in TM$), such that $L_{E}(g) = D g$ (with $D\in \mathbb{C}$) and
$\nabla^{\mathrm{LC} }(e)=0$ (where $\nabla^{\mathrm{LC}}$ is the
Levi-Civita connection of $g$).\

iii) A Frobenius metric on an $F$-manifold $(M, \circ , e,E)$ is a
metric $g$ which makes $(M, \circ , e, E, g)$ a Frobenius
manifold.

\end{defn}

There are examples of $F$-manifolds which do not support locally
any Frobenius metric (see Remark \ref{non-weak}). In fact, it is
difficult to construct explicitly Frobenius manifolds (one strong
obstruction being the flatness of the metric). The semisimple case
is understood. More precisely, a  semisimple $F$-manifold admits,
by definition,  a coordinate system $(u^{i})$, called canonical,
in which the multiplication takes the simple form
$\partial_{i}\circ
\partial_{j} = \delta_{ij}\partial_{j}$. Any multiplication invariant metric
is diagonal in canonical coordinates, its flatness  is expressed
by the Darboux-Egoroff equations and  Frobenius metrics exist
locally on the open subset $M^{\mathrm{tame}} : = \{ (u^{i}),\
u^{i}\neq u^{j},\ i\neq j\}$ of tamed points (see e.g.
\cite{dubrovin}). More general classes  of Frobenius manifolds can
be obtained from the so called initial condition theorems,
developed by Hertling and Manin in \cite{hert-man-unfol}. It turns
out that the germ $((M,p), \circ , e, E, g)$ of certain Frobenius
manifolds  is determined (modulo isomorphism) by the linear data
induced on the tangent space $T_{p}M$ and conversely, starting
with an abstract linear data (called 'initial condition') one
obtains a unique (up to isomorphism) germ of such Frobenius
manifolds. In the semisimple case, this was already proved in
\cite{dubrovin}, Lecture 3. Generalizations, where the point was
replaced by an entire submanifold, were also developed in
\cite{hert-man-unfol}.

We shall be particularly interested in the relation between
$F$-manifolds and meromorphic connections. The parameter space $M$
of a meromorphic connection $\nabla$ on a vector bundle over
$M\times D$ (where $D\subset \mathbb{C}$ is a small disc around
the origin), with poles of Poincar\'{e} rank one along $M \times
\{ 0\}$, in Birkhoff normal form $( \frac{B_{0}(x)}{\tau } +
B_{\infty})\frac{d\tau}{\tau } + \frac{{\mathcal C}_{i}(x)
dx^{i}}{\tau}$, inherits (under additional conditions), an
$F$-manifold structure. On the other hand, if at a point $x_{0}\in
M$ the matrix $B_{0}(x_{0})$ is regular (see the comments after
Definition \ref{new-def}), then, for a small neighborhood $U$ of
$x_{0}$, $\nabla\vert_{U \times D}$ is uniquely determined (up to
pull-backs $(f\times \mathrm{Id})^{*}$ and isomorphisms) by its
restriction $\nabla^{\mathrm{0}}:= \nabla\vert_{\{ x_{0}\} \times
D}$. (For this reason, $\nabla^{0}$ can be considered as the
'initial condition' for $\nabla$). This follows from the existence
of a universal integrable deformation $\nabla^{\mathrm{can}}$ of a
meromorphic connection (in our case $\nabla^{0}$), on a vector
bundle over $D$, in Birkhoff normal form, with a pole of
Poincar\'{e} rank one in $\{ 0\}$, with regular residue. Such a
universal deformation was constructed by Magrange in
\cite{mal1,mal2}. The parameter space of more general meromorphic
connections (not necessarily in the Birkhoff normal form), the so
called (TE)-structures, is also an $F$-manifold (see
\cite{dutch}).

In this paper we are concerned with a large class of
$F$-manifolds, namely the regular ones, and their relation with
Frobenius metrics and meromorphic connections.

\begin{defn}\label{new-def} An $F$-manifold $(M, \circ , e, E)$ is called  regular if
the endomorphism $\mathcal U : TM \rightarrow TM$,  $\mathcal U (X):=E\circ X$, is regular at any $p\in M.$
\end{defn}

(An endomorphism $A: V \rightarrow V$ of a complex vector space
$V$ is regular if one of the following equivalent conditions
holds: 1) any two distinct Jordan blocks from its Jordan normal
form have distinct eigenvalues; 2) the characteristic and minimal
polynomials of $A$ coincide; 3) the vector space of endomorphisms
of $V$ commuting with $A$ has dimension $n=\mathrm{dim}(V)$ and
basis $\{\mathrm{Id},A, \cdots , A^{n-1}\}$;  4)  there is a
cyclic vector for $A$,  i.e. a vector $v\in V$ such that $\{ v,
A(v), \cdots , A^{n-1}(v)\}$ is a basis of $V$).\

Our main result from this paper is as follows. Its second part can
be understood as an initial condition theorem for regular
$F$-manifolds.

\begin{thm}\label{main-0} i)  Any germ
$((M,p) ,\circ , e, E)$ of regular $F$-manifolds is  isomorphic to
a product ${\mathcal P}:=\Pi_{\alpha
=1}^{n}((\mathbb{C}^{m_{\alpha}},0), \circ_{\alpha} ,e_{\alpha},
E_{\alpha})$ of germs of (regular) $F$-manifolds. Here
$m_{\alpha}$ are the dimensions of the Jordan blocks of the
endomorphism ${\mathcal U}_{p}(X) = X \circ  E_{p}$ of $T_{p}M$.
For each such block, let $a_{\alpha}$ be the corresponding
eigenvalue of ${\mathcal U}_{p}$. In the canonical frame field $\{
\partial_{i}:= \frac{\partial}{\partial t^{i}},\ 0\leq i\leq
m_{\alpha}-1\}$, determined by coordinates $(t^{0}, \cdots ,
t^{m_{\alpha}-1})\in \mathbb{C}^{m_{\alpha}}$, the multiplication
$\circ_{\alpha}$ is given by
\begin{equation}\label{prima-form-1}
\partial_{i}\circ_{\alpha}{\partial}_{j}=
\begin{cases}
{\partial_{i+j}}, \quad i+j \leq m_{\alpha}-1\\
0, \quad i+j\geq m_{\alpha},\\
\end{cases}
\end{equation}
and the unit field and Euler field by
\begin{equation}\label{prima-form-2}
e_{\alpha}= \partial_{0},\quad E_{\alpha} =
(t^{0}+a_{\alpha})\partial_{0} + ( t^{1}+1)
\partial_{1}+ t^{2}\partial_{2}+\cdots + t^{m_{\alpha}-1} \partial_{m_{\alpha}-1} .
\end{equation}
The product $\mathcal P$ is canonically associated to $((M,p)
,\circ , e, E)$ (up to  ordering of its factors) and the
isomorphism between $((M, p),\circ , e , E)$  and $\mathcal P$ is
unique (when such an ordering is fixed).

ii) In particular, there is a unique (up to unique isomorphism)
germ of regular $F$-manifolds $((M,p), \circ , e, E)$, with given
conjugacy class for the endomorphism ${\mathcal U}_{p}(X) := X
\circ E_{p}$ of $T_{p}M$.\end{thm}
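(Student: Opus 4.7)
My plan is to derive (ii) as a direct consequence of (i): the product $\mathcal P$ is built from the pairs $(m_\alpha, a_\alpha)$, which encode exactly the Jordan data of $\mathcal U_p$, so two germs with conjugate $\mathcal U_p$ produce isomorphic $\mathcal P$. The substantive content is therefore (i), which I would prove in two stages: an algebraic splitting stage, then a normal-form stage for a single Jordan block.

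For the splitting stage, regularity of $\mathcal U_p$ means that its minimal polynomial equals its characteristic polynomial, $\prod_\alpha (t-a_\alpha)^{m_\alpha}$ with distinct $a_\alpha$. By the Chinese remainder theorem one produces polynomials $p_\alpha(t)$ so that the elements $\pi_\alpha := p_\alpha(\mathcal U_p)(e_p) \in T_pM$ form a complete set of orthogonal idempotents of $(T_pM, \circ_p)$, giving a decomposition of the tangent algebra as a direct product of local algebras $A_\alpha$ of dimensions $m_\alpha$. I would then invoke the standard splitting theorem for $F$-manifolds (essentially Hertling's Theorem 2.11 in \cite{hert-book}): any algebra splitting at a point extends uniquely to a local decomposition of the germ as a product of $F$-manifold germs with Euler field. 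This reduces (i) to the case where $\mathcal U_p$ is a single Jordan block of size $m$ with eigenvalue $a$.

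For the normal form I would first extract from the $F$-manifold axioms the identities $L_e(\circ)=0$, $[e,E]=e$, $L_e(\mathcal U)=\mathrm{Id}$, and $L_E(\mathcal U)=\mathcal U$ (the last two by computing $L_\cdot(E\circ X)$ from the first two together with the Euler equation). Taking traces, $\lambda := \frac{1}{m}\mathrm{tr}(\mathcal U)$ satisfies $e(\lambda)=1$ and $E(\lambda)=\lambda$, so $t^0 := \lambda - a$ is a local coordinate with $E(t^0)=t^0+a$. Set $\xi := E - \lambda e$. Because $e_p$ is a cyclic vector for the regular endomorphism $\mathcal U_p$, the system $\{e,\xi,\xi^{\circ 2},\ldots,\xi^{\circ m-1}\}$ is a local frame near $p$, and $L_e(\circ)=0$ together with $[e,\xi]=0$ gives $[e,\xi^{\circ k}]=0$ for all $k$. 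I would then characterise $t^1,\ldots,t^{m-1}$ as the unique functions vanishing at $p$, annihilated by $e$, satisfying $E(t^1)=t^1+1$ and $E(t^k)=t^k$ for $k\geq 2$, with differentials at $p$ forming the basis dual to $\{\xi^{\circ k}_p\}_{k=1}^{m-1}$ on $\ker(dt^0)_p$. Setting $\partial_k := \partial/\partial t^k$, the formulas \eqref{prima-form-1} and \eqref{prima-form-2} are then checked by expanding in the frame $\{e,\xi^{\circ k}\}$ and using $\partial_1^{\circ k}=\partial_k$.

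The main obstacle I anticipate is the construction of $t^1,\ldots,t^{m-1}$ and the verification of \eqref{prima-form-1}: this requires controlling the brackets $[\xi^{\circ i},\xi^{\circ j}]$ via the $F$-manifold identity $L_{X\circ Y}(\circ)=X\circ L_Y(\circ)+Y\circ L_X(\circ)$, proving integrability of the frame $\{e,\xi,\ldots,\xi^{\circ m-1}\}$, and matching the resulting flow coordinates to the prescribed Euler behaviour. Uniqueness of the isomorphism (for a fixed ordering of factors) follows because $\lambda$ and $t^0$ are intrinsically defined, each remaining $t^k$ is uniquely determined by its Lie-derivative constraints and its normalisation at $p$, and at the origin of each model factor the only algebra automorphism preserving $e_0$ and $E_0$ is the identity; canonicity of the decomposition up to ordering then follows from the canonicity of the idempotents $\pi_\alpha$.
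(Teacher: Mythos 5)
Your splitting stage is essentially the paper's: Hertling's decomposition theorem reduces everything to the case where $\mathcal U_p$ has a single eigenvalue, and your trace identities $e(\lambda)=1$, $E(\lambda)=\lambda$ are correct (they are the cases $i=0,1$ of the paper's relation $X_i(a)=a^i$). But there are two genuine gaps in the normal-form stage. The first is silent: you need $\xi=E-\lambda e$ to be $\circ$-nilpotent at every point near $p$, not just at $p$, since \eqref{prima-form-1} forces $\partial_1^{\circ m}=0$ and hence $\xi^{\circ m}=0$ identically. Equivalently, you need $\lambda$ to be the \emph{unique} eigenvalue of $\mathcal U_q$ for all $q$ near $p$. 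Hertling's decomposition only controls the eigenvalues at the base point; a priori the single eigenvalue of the Jordan block could split into $m$ distinct eigenvalues at nearby points, and ruling this out is exactly the content of the paper's Proposition \ref{nilp}. That proposition is not soft: it introduces the functions $f_k$ measuring the failure of the characteristic polynomial to be an $m$-th power, computes their derivatives along the canonical frame $X_i=E^{\circ i}$ using the bracket relations $[X_i,X_j]=(j-i)X_{i+j-1}$, and concludes $f_k\equiv 0$ by the uniqueness part of the Cauchy--Kovalevskaia theorem. Without this step your frame $\{e,\xi,\dots,\xi^{\circ(m-1)}\}$ still exists, but the multiplication table does not close and the normal form fails.

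The second gap is the one you flag yourself as the ``main obstacle'': the construction of $t^1,\dots,t^{m-1}$ and the verification of \eqref{prima-form-1}. As stated, your characterisation of the $t^k$ by the constraints $e(t^k)=0$, $E(t^1)=t^1+1$, $E(t^k)=t^k$ and a normalisation of $d_pt^k$ is circular: existence and uniqueness of such functions is equivalent to the normal form you are trying to prove. Note also that the frame $\{e,\xi,\dots,\xi^{\circ(m-1)}\}$ (equivalently $\{X_i\}$) does \emph{not} commute, so there are no flow coordinates adapted to it to ``match to the prescribed Euler behaviour''. The paper resolves this differently: rather than building coordinates on one germ, it compares two globally nilpotent regular germs directly by observing (Corollary \ref{lie-nilp} and Remark \ref{notatie}) that all structure functions of the coframe dual to $\{X_i\}$, as well as $da$, are universal polynomials in the eigenfunction $a$ alone; the classification of $\{e\}$-structures (Sternberg) then produces the isomorphism as an integral manifold of an involutive distribution on $\{(x,y): a(x)=b(y)\}\subset M\times N$ (Propositions \ref{conclusion} and \ref{changed}). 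Your plan would need either to reproduce that argument or to carry out a genuinely independent coordinate construction, and at present it does neither. Finally, for uniqueness of the isomorphism, knowing that the induced map on $T_pM$ is the identity does not by itself force a germ automorphism to be the identity; the paper's Lemma \ref{unicitate-coord} instead uses $\psi_*(E^{\circ i})=E^{\circ i}$ together with regularity to get $\psi_*X=X$ for all $X$, hence that $\psi$ commutes with all flows and fixes $p$.
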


(The conjugacy class of an endomorphism is determined by its
Jordan normal form; two endomorphisms, defined on not necessarily
the same vector space, belong to the same conjugacy class if they
can be reduced to the same Jordan normal form).\\

{\bf Structure of the paper.}  The paper is structured as follows.
In Section \ref{prel} we recall, following
\cite{dubrovin,hert,hert-book,mal1,mal2,sabbah}, the basic
definitions and results we need on Frobenius and $F$-manifolds,
Saito bundles and meromorphic connections.

Section \ref{global-sect} represents a first step in the proof of
Theorem \ref{main-0}. Here we prove that a regular $F$-manifold
$(M, \circ , e, E)$ for which the endomorphism  
${\mathcal U}_{p_{0}}= \mathcal C_{E_{p_{0}}}\in \mathrm{End} (T_{p_{0}}M)$
has exactly one eigenvalue (for $p_{0}\in M$ fixed), is globally nilpotent around $p_{0}$
(see Definition \ref{regular-def} and Proposition
\ref{nilp}).

In Section \ref{proof-section} we  prove  Theorem \ref{main-0}.
One can check 'by hand' that each factor
$(\mathbb{C}^{m_{\alpha}},\circ_{\alpha}, e_{\alpha}, E_{\alpha})$
in Theorem \ref{main-0} i) is an $F$-manifold, for which the
multiplication at $0\in \mathbb{C}^{m_{\alpha}}$ by the Euler
field $E_{\alpha}$ is a Jordan block in the canonical frame
$\{\partial_{i}\}$ of $\mathbb{C}^{m_{\alpha}}$, with  eigenvalue
$a_{\alpha}.$ This shows the existence of the germ in Theorem
\ref{main-0} ii). 
In Propositions \ref{conclusion} and \ref{changed} from this section we prove 
the uniqueness    of the germ in Theorem
\ref{main-0} ii). The argument is based on  Hertling's decomposition of
$F$-manifolds \cite{hert-book}, the local classification of $\{ e\}$-structures \cite{sternberg}
and the material from Section
\ref{global-sect}. The uniqueness of
the germ in Theorem \ref{main-0} ii) implies that any germ
$((M,p), \circ , e, E)$ of regular $F$-manifolds is isomorphic to
a product $\mathcal P$, as required in Theorem \ref{main-0} i).
The uniqueness of the isomorphisms in Theorem \ref{main-0} i) and
ii) is a consequence of the fact that any automorphism of a germ
of regular $F$-manifolds is the identity map (see Lemma
\ref{unicitate-coord}).

The next sections  are devoted to applications of Theorem
\ref{main-0}. The local coordinate system on any regular
$F$-manifold $(M, \circ , e, E)$, provided by Theorem \ref{main-0}
i), is similar to the canonical coordinate system on semisimple
$F$-manifolds. In Section \ref{frobenius-sect} we study Frobenius
metrics on $(M, \circ , e, E)$, in these coordinates. We find
conditions for the coidentity $e^{\flat}$ to be closed, the unit
$e$ to be flat and, respectively,  the Euler field $E$ to preserve
the metric (see Proposition \ref{flat-gen}). The picture is
similar to the semisimple case. To express the flatness, we use
Dubrovin's description of Frobenius manifolds (without Euler
fields) with a maximal abelian group of algebraic symmetries
\cite{dubrovin}. We find an alternative formulation for this
description (see Proposition \ref{corolar-general}) and we apply
it in order to obtain the conditions for a multiplication
invariant metric on a regular globally nilpotent $F$-manifold to
be Frobenius (see Theorem \ref{flat-gen}). The conditions are more
involved than in the semisimple case, owing to the generalized
Darboux-Egoroff equations (see Example \ref{2}).

In Section \ref{lie-sim} we define the Lie algebra of
infinitesimal symmetries of a regular $F$-manifold and we compute
it using the coordinate system provided by Theorem \ref{main-0} i)
(see Definition \label{def-inf-dez} and Proposition
\ref{thm-inf-dez}).

In Section \ref{merom-sect} we study the relation between regular
$F$-manifolds and meromorphic connections. As stated above, the
parameter space of certain meromorphic connections are
$F$-manifolds, but the converse is not true (not every
$F$-manifold can be locally obtained in this way, see Remark
\ref{non-weak} b)). We prove that the converse is, however, true,
under the regularity assumption (see Corollary \ref{mer-rel}).
Namely, we determine the $F$-manifold structure of the parameter
spaces $M^{\mathrm{can}}$ of the Malgrange universal deformations
$\nabla^{\mathrm{can}}$, mentioned above, and we show that any
regular $F$-manifold is locally isomorphic to such a parameter
space (see Proposition \ref{model-uni}).

In Section \ref{initial} we prove an initial condition theorem for
Frobenius metrics on regular $F$-manifolds (see Theorem
\ref{question}). This follows from our Theorem \ref{main-0},
combined with Theorem 4.5 of \cite{hert-man-unfol}. While the
arguments from \cite{hert-man-unfol} work in high generality, they
are also quite technical. For completeness of our exposition, we
provide in the appendix (Section \ref{appendix}) an alternative,
simple and self-contained proof for the existence of the extension
of the metric in Theorem \ref{question}, based on our treatment of
regular $F$-manifolds.\\

{\bf Acknowledgements.} L.D. is supported by a Humboldt Research
Fellowship. She thanks University of Mannheim for hospitality and
the Alexander von Humboldt-Stiftung for financial support. Partial
financial support from a CNCS-grant PN-II-ID-PCE-2011-3-0362 is
also acknowledged. C.H. acknowledges support by the DFG-grant He
2287/4-1.

\section{Preliminary material}\label{prel}

This section is intended to fix notation. We work in the
holomorphic category: the manifolds are complex and  the vector
bundles, sections, connections etc are holomorphic.  We denote by
${\mathcal T}_{M}$ the sheaf of holomorphic vector fields on a
complex manifold $M$, by ${\mathcal O}_{M}$ the sheaf of
holomorphic functions on $M$ and by $\Omega^{1}(M, V)$ the sheaf
of holomorphic $1$-forms with values in a vector bundle $V$. In
our conventions, the connection form of a connection $\nabla$ on a
vector bundle $V\rightarrow M$, in a local basis of sections $\{
s_{1}, \cdots , s_{n}\}$ of $V$, is the matrix valued $1$-form
$\Omega = (\Omega_{ij})$, defined by
$\nabla_{X}(s_{i})=\sum_{j=1}^{n}(\Omega_{ji})_{X} s_{j}.$ The
representation of an endomorphism $A\in \mathrm{End}(V)$ (where
$V$ is a vector space), in a basis $\{ v_{1}, \cdots , v_{n}\}$ of
$V$, is the matrix $A= (A_{ij})$, where $A(v_{i}) = \sum_{j=1}^{n}
A_{ji}v_{j}.$

\subsection{$F$-manifolds}

\subsubsection{Hertling's decomposition of $F$-manifolds}

The following  theorem due to Hertling (see \cite{hert-book}, page
16) plays an essential role in the proof of our main result. We
shall use it for regular $F$-manifolds, but we remark that the
regularity condition is not required for its statement in full
generality.

\begin{thm}\label{hertling-thm}
Let $((M, p), \circ ,e, E)$ be a germ of   $F$-manifolds and
$a_{1}, \cdots , a_{n}$ the distinct eigenvalues of the
endomorphism $\mathcal U(X):= X \circ E$ of $TM$ at $p$. Then
$((M, p), \circ ,e, E)$ decomposes into a product
$\Pi_{\alpha=1}^{n}((M_{\alpha},p_{\alpha}),\circ_{\alpha},
e_{\alpha},E_{\alpha})$ of germs of $F$-manifolds. For any $1\leq
\alpha\leq n$, the endomorphism $\mathcal U_{\alpha}(X):=
X\circ_{\alpha} {E_{\alpha}}$ of $TM_{\alpha}$  has precisely one
eigenvalue at $p_{\alpha}$, namely $a_{\alpha}.$

\end{thm}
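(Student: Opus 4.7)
The plan is to promote the pointwise algebraic decomposition of $(T_pM,\circ_p)$ into local factors to a decomposition of the germ of $F$-manifolds, using the $F$-manifold axiom to ensure integrability of the resulting distributions. I would proceed in three steps: algebraic decomposition at $p$, holomorphic extension of idempotents to a neighborhood, and Frobenius integration.

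First, I observe that $(T_pM,\circ_p,e_p)$ is a finite-dimensional commutative associative unital $\mathbb{C}$-algebra, hence a direct product of local Artinian factors. Grouping these factors by the unique eigenvalue of $\mathcal{U}_p$ on each factor yields orthogonal idempotents $\varepsilon_\alpha(p)\in T_pM$ satisfying $\varepsilon_\alpha(p)\circ_p\varepsilon_\beta(p)=\delta_{\alpha\beta}\varepsilon_\alpha(p)$ and $\sum_\alpha\varepsilon_\alpha(p)=e_p$, such that on each $V_\alpha:=\varepsilon_\alpha(p)\circ_p T_pM$ the endomorphism $\mathcal{U}_p$ has $a_\alpha$ as its sole eigenvalue.

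Second, I extend these idempotents to a neighborhood $U$ of $p$ via spectral projectors. Let $\gamma_\alpha$ be a small loop in $\mathbb{C}$ enclosing $a_\alpha$ and no other eigenvalue of $\mathcal{U}_q$ for $q$ close to $p$, and set
\[
\pi_\alpha(q):=\frac{1}{2\pi i}\oint_{\gamma_\alpha}\bigl(z\,\mathrm{Id}_{T_qM}-\mathcal{U}_q\bigr)^{-1}dz\in\mathrm{End}(T_qM).
\]
This is a holomorphic family of projectors, and by the holomorphic functional calculus each $\pi_\alpha(q)$ is a polynomial in $\mathcal{U}_q=C_{E_q}$ with holomorphic coefficients. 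Since $X\mapsto C_X$ is an injective algebra homomorphism $T_qM\hookrightarrow\mathrm{End}(T_qM)$ (injectivity follows from $C_X(e)=X$), I recover unique holomorphic vector fields $\varepsilon_\alpha$ on $U$ with $C_{\varepsilon_\alpha}=\pi_\alpha$, and the pointwise idempotent relations automatically extend to $U$.

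Third, and this is where I expect the main obstacle, I would show that the complementary distributions $D_\alpha:=\mathrm{im}(C_{\varepsilon_\alpha})\subset TU$ pairwise commute and are involutive, so that Frobenius' theorem produces local coordinates presenting $U$ as $\prod_\alpha M_\alpha$ with $D_\alpha=TM_\alpha$. The $F$-manifold identity $L_{X\circ Y}(\circ)=X\circ L_Y(\circ)+Y\circ L_X(\circ)$ is crucial here: applied with $X,Y$ chosen from $\{\varepsilon_\alpha,\varepsilon_\beta\}$ and then from general sections of the $D_\gamma$, and combined with the idempotent relations $\varepsilon_\alpha\circ\varepsilon_\beta=\delta_{\alpha\beta}\varepsilon_\alpha$ to annihilate cross terms, it should yield $[\varepsilon_\alpha,\varepsilon_\beta]=0$ and more broadly $[D_\alpha,D_\beta]=0$ for $\alpha\neq\beta$ together with $[D_\alpha,D_\alpha]\subseteq D_\alpha$. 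The difficulty is that the $F$-manifold axiom is a tensorial identity about Lie derivatives of $\circ$, and extracting clean Lie-bracket information requires carefully projecting each term onto the correct summand $D_\gamma$. Once the product structure is in place, on each factor I set $e_\alpha:=\varepsilon_\alpha|_{M_\alpha}$, $E_\alpha:=(\varepsilon_\alpha\circ E)|_{M_\alpha}$ and $\circ_\alpha:=\circ|_{TM_\alpha}$; the $F$-manifold axioms for $(M_\alpha,\circ_\alpha,e_\alpha,E_\alpha)$ follow by restriction from those on $M$, and the single-eigenvalue property of $\mathcal{U}_\alpha$ at $p_\alpha$ is built into the construction of $\varepsilon_\alpha$.
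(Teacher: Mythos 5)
A point of reference first: the paper does not prove Theorem \ref{hertling-thm} at all --- it is quoted from \cite{hert-book}, page 16 --- so your attempt is in effect a reconstruction of that reference's argument, and it does follow the same strategy: decompose $T_pM$ into local algebras grouped by the eigenvalues of ${\mathcal U}_p$, extend the idempotents holomorphically, and integrate. Your Steps 1 and 2 are sound; in particular, the observation that the spectral projector $\pi_\alpha(q)$ is pointwise a polynomial in ${\mathcal U}_q$, hence lies in the image of the injective unital algebra homomorphism $X\mapsto {\mathcal C}_X$, correctly produces holomorphic vector fields $\varepsilon_\alpha$ with $\varepsilon_\alpha\circ\varepsilon_\beta=\delta_{\alpha\beta}\varepsilon_\alpha$ and $\sum_\alpha\varepsilon_\alpha=e$.

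The genuine gap is Step 3, which you flag yourself but do not carry out, and which is the entire substance of the theorem: everything before it is linear algebra plus holomorphy of the functional calculus. Two concrete problems. (a) The relation you aim for, $[\mathcal{D}_\alpha,\mathcal{D}_\beta]=0$ for $\alpha\neq\beta$, is neither provable nor what you need: $\mathcal{D}_\beta$ is an ${\mathcal O}_M$-module, so $[X,fY]=f[X,Y]+X(f)Y$ already forces nonzero brackets between sections of different summands, even on a genuine product manifold. The correct targets are that each $\mathcal{D}_\alpha$ is involutive and that $[\mathcal{D}_\alpha,\mathcal{D}_\beta]\subseteq\mathcal{D}_\alpha\oplus\mathcal{D}_\beta$ for $\alpha\neq\beta$; together these make every partial sum $\bigoplus_{\gamma\in S}\mathcal{D}_\gamma$ involutive, which is what a local product chart actually requires --- applying the Frobenius theorem to each $\mathcal{D}_\alpha$ separately is not enough. (b) Deriving these bracket relations from $L_{X\circ Y}(\circ)=X\circ L_Y(\circ)+Y\circ L_X(\circ)$ is precisely the nontrivial computation (take $X\in\mathcal{D}_\alpha$, $Y\in\mathcal{D}_\beta$, use $X\circ Y=0$ to get $0=X\circ L_Y(\circ)+Y\circ L_X(\circ)$, then evaluate on arguments involving the $\varepsilon_\gamma$ and project); writing ``it should yield'' does not discharge it. Finally, even granting the product structure, you still owe an argument that $E_\alpha:=\varepsilon_\alpha\circ E$ is projectable to the factor $M_\alpha$ (i.e.\ depends only on the $M_\alpha$-coordinates) and satisfies $L_{E_\alpha}(\circ_\alpha)=\circ_\alpha$ there; this uses $L_E(\circ)=\circ$ and the decomposition and is not automatic ``by restriction''.
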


\subsubsection{Basic facts on globally nilpotent $F$-manifolds}

\begin{defn}\label{regular-def}
An $F$-manifold $(M, \circ , e ,E)$ is called globally nilpotent
if, for any $p\in M$ and $X_{p}\in T_{p}M$,  the endomorphism
${\mathcal C}_{X_{p}} : T_{p}M \rightarrow T_{p}M$, ${\mathcal
C}_{X_{p}}(Y_{p}) := X_{p}\circ Y_{p}$, has exactly one
eigenvalue. Equivalently, if
$$
{\mathcal C}_{X_{p}}= \mu (X_{p}) \mathrm{Id} + N_{X_{p}},
$$
with $\mu (X_{p})\in \mathbb{C}$ and $N_{X_{p}}\in
\mathrm{End}(T_{p}M)$ nilpotent.
\end{defn}

For any globally nilpotent $F$-manifold $(M, \circ , e, E)$ and
$X\in {\mathcal T}_{M}$, the function $\mu (X): M \rightarrow
\mathbb{C}$, $\mu (X)(p):= \mu (X_{p})$ is holomorphic. Indeed,
the characteristic polynomial $P$ of ${\mathcal C}_{X}$ is given
by $P(z, p) = (z-\mu (X_{p}) )^{n}$, for any $z\in \mathbb{C}$ and
$p\in M$ (where $n = \mathrm{dim}(M)$). Therefore, $\mu (X) =
-\frac{1}{n!} P^{(n-1)} (0,\cdot)\in {\mathcal O}_{M}$, as $P$ is
holomorphic (the superscript $(n-1)$ denotes the $(n-1)$
derivatives with respect to $z$). In particular, the eigenfunction
$a := \mu (E)$ of $\mathcal U = {\mathcal C}_{E}$ is holomorphic
(not necessarily constant).

Recall Definition \ref{new-def} of regular $F$-manifolds, from the introduction.

\begin{lem}\label{ajut} Let $(M, \circ ,e ,E)$ be a regular $F$-manifold of
dimension $n$.\

i) For any $k\geq 0$, let $X_{k}:= E\circ \cdots \circ E$
($k$-times), with $X_{0}:=e.$ The vector fields $\{ X_{0}, X_{1},
\cdots , X_{n-1}\}$ are linear independent (at any point) and
\begin{equation}\label{hertling-f}
[X_{i}, X_{j}] = (j-i) X_{i+j-1}, \quad i, j\geq 0.
\end{equation}

ii) Suppose that the endomorphism ${\mathcal U}_{p} (X_{p}) =
X_{p}\circ E_{p}$ of $T_{p}M$ has exactly one eigenvalue, for any
$p\in M.$ Then $(M, \circ , e, E)$ is globally nilpotent.

\end{lem}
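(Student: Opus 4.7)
For part (i), I would first pin down the linear independence at each $p$ by using a characterization of regularity. Regularity of $\mathcal U_p$ is equivalent (criterion (3) in the definition) to saying that the commutant of $\mathcal U_p$ in $\mathrm{End}(T_pM)$ equals $\mathbb C[\mathcal U_p]$ and has dimension $n$. Each left-multiplication $L_X:Y\mapsto X\circ Y$ commutes with $L_{E_p}=\mathcal U_p$ by commutativity and associativity of $\circ$, and the map $X\mapsto L_X$ is injective since $L_X(e_p)=X$. By a dimension count, this map must be a linear isomorphism $T_pM\xrightarrow{\sim}\mathbb C[\mathcal U_p]$, and pulling back the basis $\{I,\mathcal U_p,\dots ,\mathcal U_p^{n-1}\}$ yields the basis $\{e_p,E_p,\dots ,E_p^{\circ(n-1)}\}=\{X_0(p),\dots ,X_{n-1}(p)\}$ of $T_pM$.

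For the Lie bracket formula I would first harvest the needed structural identities from the $F$-manifold axioms. Substituting $X=Y=e$ in the integrability axiom (\ref{integr}) gives $L_e(\circ)=0$, and substituting $X=Y=e$ in the Euler axiom yields $L_E(e)=-e$. Then, by induction on $k$, one checks
\[
L_E(X_k)=(k-1)X_k,\qquad L_e(X_k)=kX_{k-1},\qquad L_{X_k}(\circ)(X,Y)=kX_{k-1}\circ X\circ Y,
\]
where the third identity uses (\ref{integr}) applied to the decomposition $X_k=E\circ X_{k-1}$. With these in hand, write $X_j=E\circ X_{j-1}$ and expand via the Leibniz rule for $L_{X_i}$ on the product $E\circ X_{j-1}$. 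The three resulting terms are $L_{X_i}(E)\circ X_{j-1}=-(i-1)X_{i+j-1}$, $E\circ L_{X_i}(X_{j-1})=E\circ[X_i,X_{j-1}]$, and $L_{X_i}(\circ)(E,X_{j-1})=iX_{i-1}\circ X_j=iX_{i+j-1}$. The outer terms combine to $X_{i+j-1}$, producing the recursion $[X_i,X_j]=X_{i+j-1}+E\circ[X_i,X_{j-1}]$. Induction on $j$, with base case $[X_i,e]=-L_e(X_i)=-iX_{i-1}$, then gives $[X_i,X_j]=(j-i)X_{i+j-1}$.

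For part (ii), I would combine (i) with the one-eigenvalue hypothesis on $\mathcal U_p$. Since $\{X_0(p),\dots ,X_{n-1}(p)\}$ is a basis of $T_pM$, any $X_p\in T_pM$ can be written $X_p=\sum_{k=0}^{n-1}c_k E_p^{\circ k}=P(E_p)$ for a polynomial $P$ of degree at most $n-1$. By associativity, $L_{E_p^{\circ k}}=L_{E_p}^{k}=\mathcal U_p^{k}$, hence $\mathcal C_{X_p}=L_{X_p}=P(\mathcal U_p)$. If $a(p)$ is the unique eigenvalue of $\mathcal U_p$, then every eigenvalue of $P(\mathcal U_p)$ equals $P(a(p))$, so $\mathcal C_{X_p}$ has a single eigenvalue at every $p$, and $(M,\circ,e,E)$ is globally nilpotent.

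The main obstacle is the somewhat tedious bookkeeping required to extract the Lie-bracket recursion from (\ref{integr}) and the Euler axiom; once the auxiliary identities for $L_E(X_k)$, $L_e(X_k)$, and $L_{X_k}(\circ)$ are in place, both parts of the lemma follow in a few lines, with part (ii) reducing to the observation that left-multiplication turns $T_pM$ into the polynomial algebra $\mathbb C[\mathcal U_p]$.
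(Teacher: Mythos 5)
Your proof is correct and follows essentially the same route as the paper: linear independence from the regularity of $\mathcal U_p$ (your commutant argument spells out what the paper asserts in one line), the bracket formula by the standard inductive computation from the $F$-manifold axioms (which the paper simply cites from Hertling--Manin), and part (ii) by writing $\mathcal C_{X_p}$ as a polynomial in $\mathcal U_p$ — your spectral-mapping phrasing is equivalent to the paper's explicit binomial expansion of $(a\,\mathrm{Id}+N)^k$ into a scalar plus a nilpotent part. All the auxiliary identities you derive ($L_e(\circ)=0$, $[E,e]=-e$, $L_E(X_k)=(k-1)X_k$, $L_e(X_k)=kX_{k-1}$, $L_{X_k}(\circ)(X,Y)=kX_{k-1}\circ X\circ Y$) check out, and the recursion $[X_i,X_j]=X_{i+j-1}+E\circ[X_i,X_{j-1}]$ closes the induction correctly.
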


\begin{proof}
The linear independence of $\{ X_{0}, \cdots , X_{n-1}\}$ follows
from regularity. Formula (\ref{hertling-f}) was proved in
\cite{hert-man} (and holds on any $F$-manifold, not necessarily
regular). Claim i) follows. We now prove claim ii). By hypothesis,
${\mathcal U} = a \mathrm{Id} + N$ on $TM$, where $a \in {\mathcal
O}_{M}$ and $N: TM \rightarrow TM$ is nilpotent (at any point).
Let $X:= f_{0} X_{0} +\cdots + f_{n-1} X_{n-1}\in {\mathcal
T}_{M}$, where $f_{i}\in {\mathcal O}_{M}$. We obtain
$$
{\mathcal C}_{X}= (\sum_{k=0}^{n-1} f_{k} a^{k} ) \mathrm{Id} +
\sum_{k=1}^{n-1} f_{k} \sum_{p=1}^{k}C_{k}^{p} a^{k-p} N^{p}.
$$
The second term in the right hand side of the above relation is a
nilpotent endomorphism. Our claim follows.\end{proof}

\begin{defn}\label{can-frame} The vector fields $\{ X_{0}, \cdots , X_{n-1}\}$
from Lemma \ref{ajut} i)  form the canonical frame of the regular
$F$-manifold $(M, \circ , e, E).$
\end{defn}

\subsubsection{Frobenius metrics on constant
$F$-manifolds}\label{constant-dub}

The $F$-manifolds and Frobenius manifolds we are interested in
come, by definition, with an Euler field. However, often in the
literature the quasi-homogeneity condition imposed by the Euler
field is considered as an additional obstruction and is not
required in the definition of these structures. This is true for
example in Dubrovin's description of  Frobenius manifolds which
underly constant $F$-manifolds \cite{dubrovin}. Since we need this
description in Subsection \ref{section-flatness}, we recall it
here. We begin with the definition of constant $F$-manifolds.

\begin{defn} An $F$-manifold $(N, \circ , e)$ (without Euler field) is
called constant if it admits a coordinate system (called
canonical) in which the multiplication is constant.
\end{defn}

Such an $F$-manifold has an $n$-dimensional abelian group of
algebraic symmetries, where $n =\mathrm{dim}(N)$ (see
\cite{dubrovin}, page 69). In a canonical coordinate system
$(t^{0}, \cdots , t^{n-1})$, $\partial_{i} \circ
\partial_{j} = c_{ij}^{k}\partial_{k}$, where
$\partial_{i}:= \frac{\partial}{\partial t^{i}}$ and
$c_{ij}^{k}\in \mathbb{C}.$ (Our notation is different from that
used in \cite{dubrovin}: in this reference, $(t^{i})$ denote the
flat coordinates, rather than the canonical ones). We assume that
there is a constant (in canonical coordinates), multiplication
invariant, (non-degenerate) metric on $TN$ and we fix such a
metric $\epsilon$. Using $\epsilon$, we identify $TN$ with
$T^{*}N.$ The multiplication $\circ$ on $TN$ induces a
multiplication, also denoted by $\circ$, on $T^{*}N$. It is given
by: $dt^{i} \circ dt^{j} = c^{ij}_{k} dt^{k}$, where $c^{ij}_{k} =
\epsilon^{is} c_{sk}^{j}.$ A $1$-form $\psi \in \Omega^{1} (N)$ is
called invertible if, for any $p\in N$, the covector $\psi_{p} \in
T^{*}_{p}N$ is invertible with respect to $\circ .$

Let $\gamma\in \mathrm{End}(TN)$ be an $\epsilon$ symmetric
endomorphism  which satisfies the generalized Darboux-Egoroff
equations:
\begin{equation}\label{gen-darboux}
[ {\mathcal C}_{i}, L_{\partial_{j}}(\gamma ) ] - [ {\mathcal
C}_{j}, L_{\partial_{i}}(\gamma ) ] + [ [{\mathcal C}_{i}, \gamma
], [ {\mathcal C}_{j}, \gamma ]] =0,\ 0\leq  i, j\leq n-1,
\end{equation}
where ${\mathcal C}_{i}:= {\mathcal C}_{\partial_{i}}$. Such an
endomorphism  is called, by analogy with the semisimple case, a
rotation coefficient operator. The following description of
Frobenius metrics on constant $F$-manifolds is due to Dubrovin
(see Theorem 3.1 of \cite{dubrovin}). Below $\psi [ {\mathcal
C}_{i}, \gamma ]\in \Omega^{1}(N)$ is the composition (as maps) of
$[ {\mathcal C}_{i}, \gamma ]\in \mathrm{End}(TN)$ with the
$1$-form $\psi : TN \rightarrow \mathbb{C}$).

\begin{thm}\cite{dubrovin}\label{dub-thm}
Let $\gamma$ be a rotation coefficient operator on $(N, \circ ,
e)$ and $\psi\in \Omega^{1} (N)$  invertible, satisfying
\begin{equation}\label{necesitate}
L_{\partial_{i}} (\psi ) = \psi  [ {\mathcal C}_{i}, \gamma ],\
0\leq i\leq n-1.
\end{equation}
Then the metric
\begin{equation}\label{metric-can-def}
g (X, Y):= (\psi\circ \psi ) (X\circ Y),\ X, Y\in TN
\end{equation}
is Frobenius on $(N, \circ , e)$. Conversely, any Frobenius metric
on $(N, \circ , e)$ is of this form, for a certain rotation
coefficient operator $\gamma $ and $1$-form $\psi .$
\end{thm}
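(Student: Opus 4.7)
The plan is to verify each of the defining properties of a Frobenius metric in turn. Multiplication invariance,
\[
g(X\circ Y,Z) = (\psi\circ\psi)(X\circ Y\circ Z) = g(X,Y\circ Z),
\]
is immediate from associativity and commutativity of $\circ$. Non-degeneracy follows because invertibility of $\psi$ forces $\chi := \psi\circ\psi$ to be invertible in $\Omega^{1}(N)$: if $g(X,\cdot)=0$ then setting $Y=e$ gives $\chi(X)=0$, so $X=0$. For $\nabla^{\mathrm{LC}}e=0$, I would first specialize (\ref{necesitate}) to $i=0$, which yields $L_{e}\psi = \psi[\mathrm{Id},\gamma] = 0$, so $\psi$ and hence $\chi$ are $e$-invariant; the Koszul formula with one argument equal to $e$ then reduces $\nabla^{\mathrm{LC}}e=0$ to closedness of the coidentity $\chi = g(e,\cdot)$, which I would verify by a short calculation using (\ref{necesitate}) together with the $\epsilon$-skewness of $[{\mathcal C}_i,\gamma]$ (the commutator of two $\epsilon$-symmetric operators).

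Flatness is the main technical step. In canonical coordinates I would write $g_{ij} = c_{ij}^{k}\chi_{k}$ and compute the Christoffel symbols, substituting first derivatives of $\chi$ via (\ref{necesitate}) so that the connection form is expressed in terms of $\psi$ and $[{\mathcal C}_i,\gamma]$. Expanding the Riemann tensor and applying (\ref{necesitate}) once more on the second derivatives that appear, my expectation is that the curvature collapses, up to contraction with $\chi$, to the tensor
\[
[{\mathcal C}_i, L_{\partial_j}\gamma] - [{\mathcal C}_j, L_{\partial_i}\gamma] + [[{\mathcal C}_i,\gamma],[{\mathcal C}_j,\gamma]],
\]
which vanishes by the generalized Darboux--Egoroff equations (\ref{gen-darboux}).

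For the converse, define $\eta(X) := g(e,X)$; multiplication invariance yields $g(X,Y) = \eta(X\circ Y)$, and non-degeneracy of $g$ makes $\eta$ invertible in $T^{*}N$. The plan is then to construct locally an invertible $\psi\in\Omega^{1}(N)$ with $\psi\circ\psi = \eta$ by extracting a square root in the sheaf of invertible covectors (the squaring map being a local diffeomorphism away from zero divisors), and to define $\gamma$ from the relation (\ref{necesitate}), pinning down the $\epsilon$-symmetric representative by imposing $\gamma^{*}=\gamma$ with respect to $\epsilon$. The condition $\nabla^{\mathrm{LC}}e=0$ guarantees the consistency of the system (\ref{necesitate}) as $i$ varies, and flatness of $g$ then translates into (\ref{gen-darboux}) by reversing the forward curvature calculation.

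The main obstacle is the flatness computation itself: one must organize the expansion of the Riemann tensor so that precisely the Darboux--Egoroff combination appears, which demands careful bookkeeping between the constant structure constants $c_{ij}^{k}$, the Lie derivatives $L_{\partial_j}\gamma$, and the substitutions provided by (\ref{necesitate}). A secondary subtlety lies in the converse direction, where one must locally extract the square root $\psi$ of $\eta$ and recover $\gamma$ as a single well-defined $\epsilon$-symmetric endomorphism solving the overdetermined system (\ref{necesitate}); this is where both the flatness of $g$ and the $\nabla^{\mathrm{LC}}e=0$ condition are needed simultaneously.
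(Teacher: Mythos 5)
First, a point of reference: the paper does not prove this statement at all --- it is quoted from Dubrovin (Theorem 3.1 of \cite{dubrovin}), and the paper's own contribution in this direction is the reformulation in Proposition \ref{corolar-general} via Lemma \ref{general}. Your proposal must therefore be judged on its own merits, and as written it is an outline whose central step is not carried out. The entire mathematical content of the forward direction is the claim that, under (\ref{necesitate}), the curvature of $g$ collapses to a contraction of the generalized Darboux--Egoroff tensor; you state this as ``my expectation'' without performing the computation. Since multiplication invariance and non-degeneracy are routine and $\nabla^{\mathrm{LC}}e=0$ is a short calculation, accepting the flatness step on faith amounts to accepting the theorem on faith. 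There is also a local error in your non-degeneracy argument: from $g(X,\cdot)=0$ you set $Y=e$, obtain $\chi(X)=0$ and conclude $X=0$; but a single nonzero covector has a large kernel, so $\chi(X)=0$ does not force $X=0$. The correct argument writes $\chi=\epsilon(T,\cdot)$ with $T=\epsilon^{-1}(\chi)$ invertible with respect to $\circ$, so that $g(X,Y)=\epsilon(T\circ X,Y)$ and $g(X,\cdot)=0$ forces $T\circ X=0$, hence $X=0$.

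Second, the converse hides more than you acknowledge. Extracting the square root $\psi$ of $\eta=g(e,\cdot)$ is indeed harmless (the differential of the squaring map at an invertible $\psi$ is $\delta\mapsto 2\psi\circ\delta$, which is invertible). But producing an $\epsilon$-symmetric $\gamma$ solving the overdetermined system (\ref{necesitate}) is not a matter of ``pinning down a representative'': Lemma \ref{general} of the paper shows that such a $\gamma$ exists if and only if $\epsilon(\psi,\psi)$ is constant and the explicit operator (\ref{gamma-def}) is $\epsilon$-symmetric, and that the solution is then unique only up to adding a term ${\mathcal C}_{X}$. In particular, in the converse direction you must \emph{derive} from the Frobenius axioms that $\epsilon(\psi,\psi)$ is constant (this follows from the $\epsilon$-skewness of $[{\mathcal C}_{i},\tilde{\gamma}]$, as in Step 3 of the proof of Lemma \ref{general}, but it has to be proved, not assumed), and you must exhibit the symmetric solution explicitly rather than postulate it. Your sketch of the converse would not survive being written out without essentially reproducing that lemma, and the translation of flatness into (\ref{gen-darboux}) again rests on the forward curvature computation you have not done.
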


\subsection{$F$-manifolds, Frobenius manifolds and Saito bundles}\label{frob-saito}

In this paragraph we recall the relation between Frobenius or
$F$-manifolds and Saito bundles (see e.g. \cite{sabbah}, Chapter
VII).

\begin{defn}\label{saito-def} i)  A Saito bundle is a vector bundle $(\pi : V \rightarrow M,\nabla , \Phi, R_{0},
R_{\infty})$  with a connection $\nabla$,  a  $1$-form $\Phi\in
\Omega^{1} (M, \mathrm{End}(V))$ and two endomorphisms
$R_{0},R_{\infty}\in \mathrm{End}(V)$, such that the following
conditions are satisfied:
$$
R^{\nabla}=0,\quad \Phi\wedge \Phi=0,\quad [R_{0}, \Phi ]=0
$$
and
\begin{equation}\label{saito-rel}
d^{\nabla}\Phi =0,\quad \nabla R_{0} + \Phi = [ \Phi ,
R_{\infty}],\quad \nabla R_{\infty}=0.
\end{equation}
Above $R^{\nabla}$ is the curvature of $\nabla$ and the
$\mathrm{End}(V)$-valued forms $[R, \Phi ]$ (with $R:= R_{0}$ or
$R_{\infty}$), $d^{\nabla} \Phi$ and $\Phi\wedge \Phi$ are defined
by: for any $X, Y\in {\mathcal T}_{M}$,
\begin{align*}
& [R, \Phi ]_{X}:= [ R, \Phi_{X}]\\
& (d^{\nabla}\Phi)_{X,Y}:= \nabla_{X} (\Phi_{Y}) - \nabla_{Y} (\Phi_{X}) - \Phi_{[X,Y]}\\
& (\Phi\wedge \Phi )_{X,Y}:=\Phi_{X} \Phi_{Y} - \Phi_{Y} \Phi_{X}.
\end{align*}

ii) A Saito bundle with metric is a Saito bundle $(V,\nabla ,
\Phi, R_{0},R_{\infty})$ with a (non-degenerate) metric $g\in
S^{2} (V^{*})$, such that the following conditions are satisfied:
$$
\nabla g=0,\quad R_{\infty} + R_{\infty}^{*}=0,\quad R_{0} =
R_{0}^{*},\quad \Phi_{X}= \Phi_{X}^{*},\ \forall X\in TM,
$$
where the superscript "$*$"denotes the $g$-adjoint.
\end{defn}

Let  $(\pi :V\rightarrow M,\nabla , \Phi , R_{0}, R_{\infty})$ be
a Saito bundle. Suppose there is a section $s$ of $V$, such that
\begin{equation}\label{iso-prim}
I: TM\rightarrow V,\quad I(X):=\Phi_{X} (s),\ X\in TM
\end{equation}
is a bundle isomorphism. Define an (associative, commutative, with
unit field $e_{M} := I^{-1}(s)$)  multiplication  $\circ_{M}$ on
$TM$ and a vector field $E_{M}\in {\mathcal T}_{M}$ by the
conditions $\Phi_{X\circ_{M} Y} (s)= \Phi_{X}\Phi_{Y}(s)$ and
$E_{M} = - I^{-1}R_{0}(s) $. (We remark that our conventions
differ from those used in \cite{hert,sabbah}; in these references,
the identification between $TM$ and $V$ is done via $-I$; the
induced multiplication on $TM$ is then $-\circ_{M}$, but the
induced fields $E_{M}$ are the same). The following holds (see
e.g. Lemmas 4.1 and 4.3 of \cite{hert}):

\begin{prop}\label{F-saito}
Both $\circ_{M}$ and $E_{M}$ are independent of the section $s$
and $(M, \circ_{M}, e_{M}, E_{M})$ is an $F$-manifold. The
endomorphism ${\mathcal U}_{M}(X) = X\circ E_{M}$ of $TM$
coincides with $- I^{-1} R_{0} I$. In particular, $({\mathcal
U}_{M})_{p}$ and $-(R_{0})_{p}$ belong to the same conjugacy
class, for any $p\in M.$
\end{prop}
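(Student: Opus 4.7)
The plan is to carry everything over to $TM$ via the bundle isomorphism $I$ and reduce the claim to algebraic manipulations with the Saito relations. Set $C_{X} := I^{-1}\Phi_{X} I \in \mathrm{End}(TM)$; since $\Phi_{X}(I(Y)) = \Phi_{X}\Phi_{Y}(s) = I(X \circ_{M} Y)$ directly from the definition, we have $C_{X}(Y) = X \circ_{M} Y$. The relation $\Phi \wedge \Phi = 0$ says $\Phi_{X}\Phi_{Y} = \Phi_{Y}\Phi_{X}$, hence $C_{X}C_{Y} = C_{Y}C_{X}$, giving commutativity of $\circ_{M}$. For associativity one first upgrades the defining identity $\Phi_{X \circ_{M} Y}(s) = \Phi_{X}\Phi_{Y}(s)$ to an operator identity $\Phi_{X \circ_{M} Y} = \Phi_{X}\Phi_{Y}$ on all of $V$: applied to an arbitrary $v = \Phi_{W}(s)$, the difference reduces, by commuting $\Phi$'s, to the original identity evaluated at $s$. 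Then $I((X\circ Y)\circ Z) = \Phi_{X\circ Y}\Phi_{Z}(s) = \Phi_{X}\Phi_{Y}\Phi_{Z}(s) = I(X \circ (Y\circ Z))$. The unit axiom is immediate from $\Phi_{e_{M}}(s) = I(e_{M}) = s$, and using $[R_{0},\Phi]=0$,
\[
I(\mathcal{U}_{M} X) = \Phi_{X}\Phi_{E_{M}}(s) = -\Phi_{X} R_{0}(s) = -R_{0}\Phi_{X}(s) = -R_{0} I(X),
\]
so $\mathcal{U}_{M} = -I^{-1} R_{0} I$, and at every $p \in M$ the isomorphism $I_{p}$ conjugates $(\mathcal{U}_{M})_{p}$ with $-(R_{0})_{p}$.

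For independence from the choice of $s$, any other section $s'$ making $I'$ an isomorphism satisfies $s' = \Phi_{X_{0}}(s) = I(X_{0})$ for a vector field $X_{0}$ invertible under $\circ_{M}$; moreover $I' = \Phi_{X_{0}} \circ I = I \circ C_{X_{0}}$. The new multiplication operator is $C'_{X} = (I')^{-1}\Phi_{X} I' = C_{X_{0}}^{-1} C_{X} C_{X_{0}} = C_{X}$ by commutativity, so $\circ' = \circ_{M}$; and
\[
E'_{M} = -(I')^{-1} R_{0}(s') = -(I')^{-1}\Phi_{X_{0}} R_{0}(s) = -I^{-1} R_{0}(s) = E_{M},
\]
using $[R_{0},\Phi]=0$ and $(I')^{-1}\Phi_{X_{0}} = I^{-1}$.

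For the remaining $F$-manifold axioms (\ref{integr}) and $L_{E_{M}}(\circ)(X,Y) = X \circ_{M} Y$, the connection $\nabla$ enters. Pull it back to a flat connection on $TM$ by $\tilde\nabla_{X}Y := I^{-1}\nabla_{X}I(Y)$. Under this pullback the equation $d^{\nabla}\Phi = 0$ becomes
\[
\tilde\nabla_{X} C_{Y} - \tilde\nabla_{Y} C_{X} = C_{[X,Y]}
\]
in $\mathrm{End}(TM)$, while $\nabla R_{0} + \Phi = [\Phi, R_{\infty}]$ becomes $-\tilde\nabla_{X} C_{E_{M}} + C_{X} = [C_{X},\, I^{-1} R_{\infty} I]$. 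Working in a local $\tilde\nabla$-flat frame and rewriting Lie derivatives of the multiplication through $\tilde\nabla$ (with torsion terms absorbed by the commutativity $[C_{X},C_{Y}] = 0$), the first translated identity is exactly the Hertling-Manin integrability condition (\ref{integr}); the second, combined with $\nabla R_{\infty} = 0$, gives the Euler axiom $L_{E_{M}}(\circ) = \circ$.

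The main obstacle is the third step: turning the Saito-bundle structural equations into the Hertling-Manin and Euler identities is formal but delicate, since $\tilde\nabla$ need not be torsion-free and the Lie derivatives of a non-constant multiplication mix derivatives of the $C_{X}$'s with their commutators. All other ingredients — commutativity, associativity, unit, the formula $\mathcal{U}_{M} = -I^{-1} R_{0} I$ and the pointwise conjugacy of $(\mathcal{U}_{M})_{p}$ and $-(R_{0})_{p}$, and the independence of $\circ_{M}$ and $E_{M}$ from $s$ — are direct algebraic consequences of $\Phi \wedge \Phi = 0$, $[R_{0},\Phi] = 0$, and the bijectivity of $I$.
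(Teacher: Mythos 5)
The paper does not actually prove Proposition \ref{F-saito}: it cites Lemmas 4.1 and 4.3 of \cite{hert}, so there is no internal argument to compare yours against line by line. Your algebraic steps are correct and complete: commutativity from $\Phi\wedge\Phi=0$, the upgrade of $\Phi_{X\circ_M Y}(s)=\Phi_X\Phi_Y(s)$ to an operator identity by evaluating on $v=\Phi_W(s)$ and commuting, the unit axiom, the identity $\mathcal U_M=-I^{-1}R_0I$ from $[R_0,\Phi]=0$ together with the pointwise conjugacy, and the independence of $\circ_M$ and $E_M$ from the choice of $s$ via $I'=I\circ C_{X_0}$. All of that is a faithful reconstruction of what the cited lemmas do for those parts.

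The genuine gap is exactly the one you flag yourself: the verification of the Hertling--Manin integrability condition (\ref{integr}) and of $L_{E_M}(\circ)=X\circ Y$ is only a translation of the Saito relations into statements about $C_X$ and $\tilde\nabla$, not a proof. The difficulty is real and not merely notational. A direct computation shows that the torsion of $\tilde\nabla_XY:=I^{-1}\nabla_X I(Y)$ equals $I^{-1}\bigl(\Phi_Y\nabla_Xs-\Phi_X\nabla_Ys\bigr)$ after using $d^\nabla\Phi=0$, so $\tilde\nabla$ is torsion-free only when $s$ is $\nabla$-flat --- which is not assumed in this proposition (flatness of $s$ enters only later, for the metric statement). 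Hence "working in a local $\tilde\nabla$-flat frame" and "absorbing torsion terms by $[C_X,C_Y]=0$" does not obviously produce (\ref{integr}); the commutators $[C_X,C_Y]$ vanish identically and cannot absorb first-order terms in $\nabla s$. A workable repair is available inside your own framework: since $\nabla$ is flat, one can locally choose a $\nabla$-flat section $s'$ for which $I'$ is still an isomorphism (an open condition), and your independence-of-$s$ result lets you compute $\circ_M$ and $E_M$ with $s'$ instead; then $\tilde\nabla'$ is flat and torsion-free, $d^{\tilde\nabla'}C=0$ becomes total symmetry of $\tilde\nabla'(\circ_M)$, and one can invoke (or prove) the standard lemma that a flat torsion-free connection with totally symmetric $\nabla(\circ)$ forces the Hertling--Manin condition, with the Euler axiom following from the translated relation $-\tilde\nabla'_X(C_{E_M})+C_X=[C_X,(I')^{-1}R_\infty I']$. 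But that lemma is itself the substantive content of Hertling's Lemmas 4.1 and 4.3, and as written your proposal neither proves it nor reduces to a form where it applies.
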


Suppose now that $g$ is a metric on $V$, which makes $(V,\nabla ,
\Phi ,R_{0}, R_{\infty},g)$ a Saito bundle with metric. Suppose
that the section $s$ from the isomorphism (\ref{iso-prim})  is
$\nabla$-parallel and $R_{\infty}(s) = qs$, for $q\in \mathbb{C}$.
Such a section is called primitive homogeneous. Then $g_{M} (X,Y)
:=g( I(X), I(Y))$ is a Frobenius metric on $(M,\circ_{M}, e_{M},
E_{M})$. The Levi-Civita $\nabla^{\mathrm{LC}}$ of $g_{M}$ is
given by $\nabla^{\mathrm{LC}} = I^{-1} \circ \nabla \circ I$ and
\begin{equation}\label{der-Euler}
\nabla^{\mathrm{LC}}E_{M} = I^{-1} R_{\infty}  I +
(1-q)\mathrm{Id}.
\end{equation}
(see \cite{sabbah}, page 239). Conversely, any Frobenius manifold
arises in this way (see \cite{sabbah}, page 240).

\subsection{$F$-manifolds and flat meromorphic
connections}\label{f-merom}

Let $\nabla$ be a flat meromorphic connection on a vector bundle
$E $ over $M\times D$ (where $D\subset\mathbb{C}$ is a small disc
around the origin), with poles of Poincar\'{e} rank one along $M
\times \{ 0\}$, in Birkhoff normal form. By definition, this means
that $E= (M \times D) \times \mathbb{C}^{n}\rightarrow M \times D$
is the trivial bundle and the connection form of $\nabla$ in the
standard trivialization of $E$ is given by
\begin{equation}\label{forma-conex}
\Omega = (\frac{B_{0}(x)}{\tau } + B_{\infty}) \frac{d\tau}{\tau }
+ \frac{{\mathcal C}_{i}(x) dx^{i}}{\tau},
\end{equation}
where $(x^{i})$ are coordinates on $M$, $\tau$ is the coordinate
on $D$, ${\mathcal C}_{i} : M \rightarrow M_{n}(\mathbb{C})$,
$B_{0}: M\rightarrow M_{n}(\mathbb{C})$ and $B_{\infty}\in
M_{n}(\mathbb{C})$. We consider $B_{0}, B_{\infty}$ as
endomorphisms (the latter, constant) of the trivial bundle $V=
M\times \mathbb{C}^{n} \rightarrow M$ and ${\mathcal C}= {\mathcal
C}_{i}dx^{i} $ as an $\mathrm{End}(V)$-valued $1$-form on $M$. In
its simplest form, the relation between $F$-manifolds and
meromorphic connections is the following (for the more general
relation between $F$-manifolds and (TE)-structures, see
\cite{dutch}, Theorem 3.1).

\begin{prop}\label{F-mer} Let $\nabla$ be a flat meromorphic connection on
the trivial bundle $E = (M\times D)\times \mathbb{C}^{n}
\rightarrow M \times D$, in Birkhoff normal form
(\ref{forma-conex}).\

i) The trivial bundle $V = M \times \mathbb{C}^{n}\rightarrow M$,
together with $(D, {\mathcal C}, B_{0},- B_{\infty})$, where $D$
is the canonical flat connection of $V$ and ${\mathcal C}$,
$B_{0}$, $B_{\infty}$ are as above, is a Saito bundle.\

ii) In particular, if there is a section $s$ of $V$, such that
(\ref{iso-prim}) is an isomorphism, then $M$ inherits an
$F$-manifold structure.
\end{prop}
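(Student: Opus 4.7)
The plan is to prove (i) by a direct computation, namely by expanding the flatness identity $d\Omega + \Omega\wedge\Omega = 0$ of the connection form (\ref{forma-conex}) and separating coefficients; part (ii) then follows immediately from Proposition \ref{F-saito}.

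For (i), two of the six Saito bundle axioms hold trivially: $R^D = 0$ because $D$ is the canonical flat connection on the trivial bundle $V$, and $DR_\infty = 0$ because $R_\infty := -B_\infty$ is constant. For the remaining four, write $\Omega = P\,d\tau + Q_i\,dx^i$ with $P := B_0/\tau^2 + B_\infty/\tau$ and $Q_i := \mathcal{C}_i/\tau$, and expand $d\Omega + \Omega\wedge\Omega$ into its $d\tau\wedge dx^i$ and $dx^i\wedge dx^j$ components. The vanishing of the $d\tau\wedge dx^i$ component, separated into its $1/\tau^3$ and $1/\tau^2$ contributions, yields respectively $[B_0,\mathcal{C}_i] = 0$ and $\partial_i B_0 + \mathcal{C}_i = [B_\infty,\mathcal{C}_i]$; in intrinsic notation these are $[R_0,\Phi] = 0$ and $DR_0 + \Phi = [\Phi, R_\infty]$ (the sign on the right being exactly what forces the convention $R_\infty = -B_\infty$). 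The vanishing of the antisymmetric part of the $dx^i\wedge dx^j$ component, again separated by powers of $\tau$, yields $\partial_i\mathcal{C}_j - \partial_j\mathcal{C}_i = 0$ (i.e.\ $d^D\Phi = 0$) and $[\mathcal{C}_i,\mathcal{C}_j] = 0$ (i.e.\ $\Phi\wedge\Phi = 0$). This exhausts the axioms of Definition \ref{saito-def} i), so $(V, D, \mathcal{C}, B_0, -B_\infty)$ is a Saito bundle.

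Part (ii) is then immediate from Proposition \ref{F-saito}: given a section $s$ of $V$ for which $I(X) := \Phi_X(s) = \mathcal{C}_X(s)$ is a bundle isomorphism $TM \to V$, Proposition \ref{F-saito} directly endows $M$ with an $F$-manifold structure, whose unit field is $e_M = I^{-1}(s)$, Euler field $E_M = -I^{-1}B_0(s)$ and multiplication characterized by $\mathcal{C}_{X\circ_M Y}(s) = \mathcal{C}_X\mathcal{C}_Y(s)$. The whole proof is essentially a bookkeeping exercise; the only mild pitfall is matching signs consistently so that the flatness identity reproduces the Saito axiom in the precise form of Definition \ref{saito-def} i), as already noted.
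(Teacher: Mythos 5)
Your proof is correct and follows exactly the route the paper takes: the paper's own proof of part (i) is the one-line remark that it ``follows from the flatness condition $d\Omega+\Omega\wedge\Omega=0$,'' and your expansion into the $d\tau\wedge dx^i$ and $dx^i\wedge dx^j$ components, separated by powers of $\tau$, is precisely the computation being left to the reader (the extracted identities $[B_{0},\mathcal{C}_{i}]=0$, $\partial_{i}B_{0}+\mathcal{C}_{i}=[B_{\infty},\mathcal{C}_{i}]$, $\partial_{i}\mathcal{C}_{j}=\partial_{j}\mathcal{C}_{i}$ and $[\mathcal{C}_{i},\mathcal{C}_{j}]=0$ all check out, as does the sign bookkeeping for $R_{\infty}=-B_{\infty}$). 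Part (ii) is handled identically in both, as an immediate consequence of Proposition \ref{F-saito}.
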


\begin{proof} Claim i) follows from the flatness condition
$d\Omega + \Omega \wedge \Omega =0$. Claim ii) follows from
Proposition \ref{F-saito}.\end{proof}

\subsection{Malgrange universal deformation}\label{malgrange-sect}

Let $\nabla^{0}$ be a connection on the trivial bundle $V^{0}=
D\times \mathbb{C}^{n} \rightarrow D$, with connection form
\begin{equation}\label{omega-0}
\Omega^{0}=(\frac{B_{0}^{o}}{\tau } + B_{\infty})
\frac{d\tau}{\tau },
\end{equation}
where $B^{o}_{0}, B_{\infty}\in M_{n}(\mathbb{C})$. To keep the
text self-contained, we recall the definition of an integrable
deformation of $\nabla^{0}$.

\begin{defn}
An integrable deformation of $\nabla^{0}$ is a flat meromorphic
connection $\nabla$ on the trivial vector bundle $E = (M \times
D)\times \mathbb{C}^{n} \rightarrow M \times D$, in  Birkhoff
normal form (\ref{forma-conex}),  which coincides with
$\nabla^{0}$ when restricted to $\{ p_{0}\} \times D$ (where
$p_{0}\in M$).
\end{defn}

Assume now that $B^{o}_{0}$ is  regular. Then $\nabla^{0}$ admits
an integrable deformation $\nabla^{\mathrm{can}}$, constructed by
Malgrange \cite{mal1,mal2}, which is universal (see e.g.
\cite{sabbah}, page 208, for the definition of universal
integrable deformations). Following Sabbah (\cite{sabbah}, Chapter
VI, Section 3.a), we now recall its construction. Let $\mathcal
D\subset T (M_{n}(\mathbb{C}^{n}))$ be defined by
\begin{equation}\label{rond}
{\mathcal D}_{\Gamma}:= \mathrm{Span}_{\mathbb{C}} \{ \mathrm{Id},
(B_{0})_{\Gamma}, \cdots ,(B_{0})_{\Gamma }^{n-1}\}\subset
T_{\Gamma}M_{n}({\mathbb{C}}) = M_{n}({\mathbb{C}}),
\end{equation}
where
\begin{equation}\label{B-0}
(B_{0})_{\Gamma }:= B^{o}_{0}-\Gamma  +[B_{\infty}, \Gamma ].
\end{equation}
Because $B^{o}_{0}$ is regular, so is $(B_{0})_{\Gamma}$, for any
$\Gamma\in W$, where $W$ is a small open neighborhood of  $0$ in
$M_{n}({\mathbb{C}})$. For any $\Gamma \in W$,  $\mathcal
D_{\Gamma}$ is the ($n$-dimensional) vector space of polynomials
in $(B_{0})_{\Gamma}$  and the distribution  $\mathcal
D\rightarrow W$ is integrable. The Malgrange universal deformation
of $\nabla^{0}$ is defined as follows \cite{mal1,mal2} (also
\cite{sabbah}):

\begin{defn}\label{def-para} i) The parameter space $M^{\mathrm{can}}= M^{\mathrm{can}} (B_{0}^{o}, B_{\infty})$
of the universal deformation $\nabla^{\mathrm{can}}$ of
$\nabla^{0}$ is the maximal integral submanifold of $\mathcal
D\vert_{W}$, passing through $0$.

ii) The connection $\nabla^{\mathrm{can}}$ of $\nabla^{0}$ is
defined on the trivial bundle $E = (M^{\mathrm{can}} \times
D)\times \mathbb{C}^{n} \rightarrow M^{\mathrm{can}}\times D$,
with connection form in the standard trivialization of $E$ given
by
\begin{equation}\label{omega-cann}
\Omega^{\mathrm{can}} = \left( \frac{B_{0}}{\tau} +
B_{\infty}\right) \frac{d\tau}{\tau } +\frac{\mathcal C}{\tau}.
\end{equation}
Here $B_{0} : M^{\mathrm{can}}\rightarrow M_{n}(\mathbb{C})$,
$(B_{0})(\Gamma) : = (B_{0})_{\Gamma}$ is given by (\ref{B-0}) and
${\mathcal C}_{X}:= X$ is the action of the matrix $X$ on
$\mathbb{C}^{n}$, for any $X\in T_{\Gamma}M^{\mathrm{can}}\subset
M_{n}(\mathbb{C})$.

\end{defn}

\section{Globally nilpotent regular
$F$-manifolds}\label{global-sect}

Our aim in this section is to prove the following result.

\begin{prop}\label{nilp}Let $(M, \circ , e, E)$ be an $F$-manifold
of dimension $n\geq 2$, such that at a point $p_{0}\in M$, the
endomorphism ${\mathcal U}_{p_{0}} = ({\mathcal C}_{E})_{p_{0}} :
T_{p_{0}}M \rightarrow T_{p_{0}} M$ is regular, with exactly one
eigenvalue. Then there is a neighborhood $U$ of $p_{0}$, such that
$(U, \circ ,e , E)$ is globally nilpotent (and regular).
\end{prop}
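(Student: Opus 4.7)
The plan is to show that for every $p$ in some neighborhood of $p_0$ the endomorphism $\mathcal{U}_p$ has exactly one eigenvalue; global nilpotency then follows immediately from Lemma \ref{ajut} ii). Regularity of an endomorphism is an open condition (if $v_0$ is a cyclic vector for $\mathcal{U}_{p_0}$, then the determinant $\det[v_0, \mathcal{U}_p v_0, \ldots, \mathcal{U}_p^{n-1} v_0]$ is a nonzero holomorphic function at $p_0$, hence nonzero on a neighborhood), so I first shrink to a neighborhood $U$ of $p_0$ on which $\mathcal{U}_p$ is regular; on $U$, Lemma \ref{ajut} i) supplies the canonical frame $\{X_0, X_1, \ldots, X_{n-1}\}$, linearly independent at every point of $U$.

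Next I argue by contradiction. Assume that in every neighborhood of $p_0$ there is a point at which $\mathcal{U}$ has at least two distinct eigenvalues, and pick a sequence $q_j \to p_0$ of such points. Since there are only finitely many partitions of $n$, after passing to a subsequence I may assume $\mathcal{U}_{q_j}$ has a constant number $k \ge 2$ of distinct eigenvalues $a_1(q_j), \ldots, a_k(q_j)$ with constant multiplicities $m_1, \ldots, m_k$, $\sum_\alpha m_\alpha = n$. By Theorem \ref{hertling-thm} applied at $q_j$, there is a germ decomposition $((M, q_j), \circ, e, E) \cong \prod_{\alpha=1}^k ((M_\alpha^{(j)}, q_\alpha^{(j)}), \circ_\alpha, e_\alpha, E_\alpha)$ with $\dim M_\alpha^{(j)} = m_\alpha$ and $(\mathcal{U}_\alpha)_{q_\alpha^{(j)}}$ having only the eigenvalue $a_\alpha(q_j)$. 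Because $e = \sum_\alpha e_\alpha$, $E = \sum_\alpha E_\alpha$ and $E_\alpha \circ E_\beta = 0$ for $\alpha \ne \beta$, each subspace $T_{q_j} M_\alpha^{(j)} \subset T_{q_j} M$ is $\mathcal{U}_{q_j}$-invariant; having dimension $m_\alpha$ and carrying $\mathcal{U}_{q_j}$ with only the eigenvalue $a_\alpha(q_j)$, it coincides with the generalized $a_\alpha(q_j)$-eigenspace.

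Using a local trivialization of $TM$ over $U$, compactness of the Grassmannian lets me pass to a further subsequence so that, for each $\alpha$, $T_{q_j} M_\alpha^{(j)}$ converges to an $m_\alpha$-dimensional subspace $V_\alpha \subset T_{p_0} M$. By continuity of $\mathcal{U}$ and of the eigenvalues, $a_\alpha(q_j) \to a$ (the unique eigenvalue of $\mathcal{U}_{p_0}$), and $V_\alpha$ is $\mathcal{U}_{p_0}$-invariant with $V_\alpha \subset \ker((\mathcal{U}_{p_0} - a\,\mathrm{Id})^{m_\alpha})$. Since $\mathcal{U}_{p_0}$ is regular with a single eigenvalue it is conjugate to a single Jordan block; its only invariant subspaces are the members of the chain $V^\ell := \ker((\mathcal{U}_{p_0} - a\,\mathrm{Id})^\ell)$ for $0 \le \ell \le n$, with $\dim V^\ell = \ell$. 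Matching dimensions forces $V_\alpha = V^{m_\alpha}$, hence $\sum_\alpha V_\alpha = V^{\max_\alpha m_\alpha}$. But $k \ge 2$ together with $\sum_\alpha m_\alpha = n$ and $m_\alpha \ge 1$ gives $\max_\alpha m_\alpha \le n-1$, so $\sum_\alpha V_\alpha$ is a proper subspace of $T_{p_0} M$.

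To finish, decompose the canonical frame along the product: $X_\ell(q_j) = E^{\circ \ell}(q_j) = \sum_\alpha E_\alpha^{\circ \ell}(q_\alpha^{(j)})$, with each summand lying in $T_{q_j} M_\alpha^{(j)}$. Passing to the limit $j \to \infty$, each summand lands in $V_\alpha$, so $X_\ell(p_0) \in \sum_\alpha V_\alpha \subset V^{n-1}$ for every $\ell = 0, \ldots, n-1$, contradicting the linear independence of $\{X_\ell(p_0)\}$ guaranteed by Lemma \ref{ajut} i). The delicate step in this plan is the second paragraph, specifically the identification of $T_{q_j} M_\alpha^{(j)}$ with a generalized eigenspace of $\mathcal{U}_{q_j}$ together with the uniform dimension control needed to apply Grassmannian compactness; both follow from the explicit product structure provided by Theorem \ref{hertling-thm}, but writing them cleanly is where the main bookkeeping lies.
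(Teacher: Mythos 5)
Your reduction of the proposition to the claim that $\mathcal{U}_q$ has a single eigenvalue for all $q$ near $p_0$ is correct, and the first two thirds of your argument are sound: openness of regularity, Hertling's decomposition at the points $q_j$, the identification of $T_{q_j}M_\alpha^{(j)}$ with the generalized eigenspaces, Grassmannian compactness, and the identification $V_\alpha=V^{m_\alpha}$ all go through. The gap is in the final step, in the sentence ``passing to the limit $j\to\infty$, each summand lands in $V_\alpha$''. The summand $E_\alpha^{\circ\ell}(q_j)$ is the image of $X_\ell(q_j)$ under the spectral projection $\pi_\alpha^{(j)}$ of $\mathcal{U}_{q_j}$ onto the generalized $a_\alpha(q_j)$-eigenspace, and these projections carry no a priori bound: their norms blow up precisely when distinct eigenvalues collide, which is exactly the regime you are in, since $a_\alpha(q_j)\to a$ for every $\alpha$. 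A convergent sum of summands $v_\alpha^{(j)}\in W_\alpha^{(j)}$ with $W_\alpha^{(j)}\to V_\alpha$ need not have its limit in $\sum_\alpha V_\alpha$ when the summands are unbounded: in $\mathbb{C}^2$ take $W_1^{(j)}=\mathbb{C}\cdot (1,1/j)$ and $W_2^{(j)}=\mathbb{C}\cdot (1,-1/j)$, both converging to $\mathbb{C}\cdot (1,0)$, while $(0,1)=\frac{j}{2}(1,1/j)-\frac{j}{2}(1,-1/j)$ lies in $W_1^{(j)}+W_2^{(j)}$ for every $j$. So from $X_\ell(q_j)=\sum_\alpha E_\alpha^{\circ\ell}(q_j)$ you cannot conclude $X_\ell(p_0)\in\sum_\alpha V_\alpha$ without first bounding the spectral components, and no such bound has been established.

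This is not a removable technicality. The blow-up of the idempotents $e_\alpha$ (equivalently, of the spectral projections of $\mathcal{U}$) as one approaches a point where eigenvalues merge is a genuine phenomenon in $F$-manifolds --- it is exactly what happens on a generically semisimple $F$-manifold at a caustic --- and ruling it out under the regularity hypothesis is essentially equivalent to the proposition itself, so assuming boundedness of the components would be circular. The paper sidesteps the degeneration analysis entirely: it introduces the functions $f_k=\lambda_k-\frac{C_n^k}{n^{n-k}}\lambda_{n-1}^{n-k}$ built from the coefficients $\lambda_k$ of the characteristic polynomial of $\mathcal{U}$ (which vanish simultaneously exactly where $\mathcal{U}$ has one eigenvalue), computes the derivatives $X_i(f_k)$ from the commutation relations $[X_i,X_j]=(j-i)X_{i+j-1}$ and the identity $P(p,\mathcal{U}_p)=0$, observes that they satisfy a first-order linear system $X_i(f_k)=\sum_s a^{(i)}_{ks}f_s$, and concludes $f_k\equiv 0$ from $f_k(p_0)=0$ by the uniqueness part of the Cauchy--Kovalevskaia theorem. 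To rescue your compactness approach you would need an independent argument that the spectral projections of $\mathcal{U}_{q_j}$ stay bounded as $q_j\to p_0$, and I do not see how to obtain that without differential identities of the kind the paper uses.
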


\begin{proof}
Let $U$ be  a small neighborhood of $p_{0}$, such that, for any
$p\in U$, the endomorphism ${\mathcal U}_{p}= ({\mathcal
C}_{E})_{p}: T_{p}M \rightarrow T_{p}M$  is regular, and let
$P(p,z) = z^{n} + \sum_{k=0}^{n-1} \lambda_{k} (p) z^{k}$ be  the
characteristic (or minimal) polynomial  of ${\mathcal U}_{p}$.
Denote by $\{ X_{0} ,X_{1}, \cdots ,\ X_{n-1}\}$  the canonical
frame of the regular $F$-manifold $(U, \circ ,e, E)$ (see
Definition \ref{can-frame}). Since $P(p,{\mathcal U}_{p})=0$,
\begin{equation}\label{f}
X_{n} + \sum_{k=0}^{n-1} \lambda_{k} X_{k} =0.
\end{equation}
Define the functions
$$
f_{k}:= \lambda_{k} - \frac{ C_{n}^{k}}{ n^{n-k}}
\lambda_{n-1}^{n-k},\ 0\leq k\leq n-2.
$$
(Remark that $f_{k}(p)=0$ for $p\in U$  and all $0\leq k\leq n-2$,
if and only if  $P(p, z) = (z + \frac{\lambda_{n-1}(p)}{n})^{n}$,
if and only if $\mathcal U_{p}$ has exactly one eigenvalue). By
hypothesis, $f_{k} (p_{0})=0$, for any $ 0\leq k\leq n-2.$ Our aim
is to compute the derivatives $X_{i}(f_{k})$ and to show, using
the Cauchy-Kovalevskaia theorem, that $f_{k}=0$ on $U$, for any
$0\leq k\leq n-2.$ This implies that $\mathcal U$ has exactly one
eigenvalue at any point of $U$ and we conclude from Lemma
\ref{ajut} that $(U, \circ , e, E)$ is globally nilpotent, as
required. Details are as follows.

We take the Lie
derivative of (\ref{f}) with respect to $X_{s}$ ($s\geq 0$) and we use
(\ref{hertling-f}). We obtain
\begin{equation}
(n-s) X_{s+n-1} + \sum_{k=0}^{n-1} \left( X_{s} (\lambda_{k})X_{k} +
(k-s) \lambda_{k} X_{s+k-1}\right) =0,
\end{equation}
which is equivalent, by taking $\lambda_{n}:=1$, to
\begin{equation}\label{ajutatoare}
\sum_{k=0}^{n-1} X_{s} (\lambda_{k})X_{k} +\sum_{k=0}^{n}(k-s) \lambda_{k} X_{s+k-1}=0.
\end{equation}
Relation (\ref{ajutatoare}), with $s=0$, gives
\begin{equation}\label{0}
X_{0} (\lambda_{k}) =- (k+1) \lambda_{k+1},\ 0\leq k \leq n-1.
\end{equation}
Relation (\ref{ajutatoare}), with $s=1$, gives
$$
\sum_{k=0}^{n-1} X_{1} (\lambda_{k}) X_{k} +\sum_{k=0}^{n} (k-1)\lambda_{k} X_{k}=0.
$$
From (\ref{f}),   $X_{n}= - \sum_{k=0}^{n-1} \lambda_{k} X_{k}$.
Replacing this expression of $X_{n}$ into the above relation we obtain
\begin{equation}\label{1}
X_{1} (\lambda_{k})=  (n-k) \lambda_{k}, \quad  0\leq k\leq n-1.
\end{equation}
The computation of $X_{2}(\lambda_{k})$ is done in the same way, but is a bit more complicated.
Taking in
(\ref{ajutatoare}) $s=2$  we obtain
$$
\sum_{k=0}^{n-1} X_{2} (\lambda_{k}) X_{k} + \sum_{k=0}^{n}
(k-2)\lambda_{k} X_{k+1} =0
$$
or, equivalently,
$$
\sum_{k=0}^{n-1} X_{2} (\lambda_{k}) {\mathcal U}^{k} +
\sum_{k=0}^{n} (k-2)\lambda_{k} {\mathcal U}^{k+1} =0.
$$
Since $P(p,\cdot )$ is the minimal polynomial of $\mathcal U_{p}$
(for any $p\in U$) there are holomorphic functions $b_{0},b_{1}\in
{\mathcal O}_{U}$ such that
\begin{equation}\label{ajut-2}
\sum_{k=0}^{n-1} X_{2}(\lambda_{k}) z^{k} + \sum_{k=0}^{n}
(k-2)\lambda_{k} z^{k+1}  = (b_{0} + b_{1} z)
\sum_{k=0}^{n} \lambda_{k}z^{k}.
\end{equation}
Identifying in (\ref{ajut-2})  the coefficients of $z^{n+1}$ we
obtain $b_{1} = n-2$. Relation (\ref{ajut-2}) becomes
\begin{equation}\label{ajut-2-2}
\sum_{k=0}^{n} (X_{2} (\lambda_{k}) - b_{0} \lambda_{k})z^{k} -\sum_{k=1}^{n}
( n-k+1) \lambda_{k-1} z^{k}=0.
\end{equation}
Then (from the coefficient of $z^{0}$),  $X_{2} (\lambda_{0}) = b_{0} \lambda_{0}$, and the remaining
terms in (\ref{ajut-2-2})  give
\begin{equation}\label{aj-2}
\sum_{k=1}^{n} (X_{2} (\lambda_{k}) - b_{0} \lambda_{k} -
(n-k+1) \lambda_{k-1}) z^{k}=0.
\end{equation}
Using  $\lambda_{n}=1$,
we obtain  $b_{0} =-\lambda_{n-1}$ (from the coefficient of $z^{n}$) and
relation (\ref{aj-2}) becomes
\begin{equation}\label{2-prime}
X_{2}
(\lambda_{k}) = - \lambda_{n-1} \lambda_{k} + (n-k+1)\lambda_{k-1},\
0\leq k\leq n-1.
\end{equation}
(We use the convention $\lambda_{i}=0$ for $i <0$; similarly,
below $f_{i}=0$ whenever $i<0$).

Next, we compute the derivatives $X_{3} (\lambda_{k})$.
Relation (\ref{ajutatoare}), with $s=3$, gives, by a similar argument as for $s=2$,
\begin{equation}\label{3}
X_{3} (\lambda_{k})=  ( \lambda^{2}_{n-1} - 2\lambda_{n-2})
\lambda_{k} + (n-k +2) \lambda_{k-2} - \lambda_{n-1}
\lambda_{k-1},\ 0\leq k\leq n-1.
\end{equation}

From relations (\ref{0}), (\ref{1}), (\ref{2-prime})  and
(\ref{3}), we obtain, from long but straightforward computations,
the expressions for the derivatives $X_{i}(f_{k})$ (for any $0\leq
i\leq 3$ and  $0\leq k\leq n-2$):
\begin{equation}\label{f-1}
X_{0}(f_{k}) =- (k+1) f_{k+1}\ (k\leq n-3),\ X_{0} (f_{n-2})
=0,\quad X_{1}(f_{k}) = (n-k) f_{k}
\end{equation}
and
\begin{align}
\nonumber&X_{2} (f_{k})= -\lambda_{n-1} f_{k} + (n-k+1) f_{k-1} -\frac{2 C_{n}^{k}(n-k)}{n^{n-k}}\lambda_{n-1}^{n-k-1}f_{n-2},\\
\nonumber&X_{3}(f_{k})= (\lambda^{2}_{n-1} -2 \lambda_{n-2})f_{k} +  \frac{ C_{n}^{k} (3n-3k-2) }{n^{n-k}}
\lambda_{n-1}^{n-k}
f_{n-2}\\
\label{f-2}& + (n-k+2) f_{k-2} -\lambda_{n-1}f_{k-1} -\frac{3C_{n}^{k}(n-k)}{n^{n-k}}\lambda_{n-1}^{n-k-1}f_{n-3}.
\end{align}
In particular,  both (\ref{f-1}) and  (\ref{f-2}) are of the form
\begin{equation}\label{final}
X_{i} (f_{k}) = \sum_{s=0}^{n-2} a^{(i)}_{ks} f_{s},\ 0\leq i\leq
3,\ 0\leq k\leq n-2,
\end{equation}
for some  $a_{ks}^{(i)}\in {\mathcal O}_{U}$ . Since $[X_{i},
X_{j}]= (j-i) X_{i+j-1}$ (see relation (\ref{hertling-f})),  we
obtain that the derivatives $X_{i}(f_{k})$, for any $0\leq i\leq
n-1$ (and $0\leq k\leq n-2$), are of the form (\ref{final}). In a
coordinate  chart $\chi = (y^{0},\cdots , y^{n-1}): U \rightarrow
\mathbb{C}^{n}$ with $\chi (p_{0}) =0$  we obtain
$$
\frac{\partial (f_{k}\circ \chi^{-1})}{\partial y^{i}} =
\sum_{s=0}^{n-2} b^{(i)}_{ks} (f_{s}\circ \chi^{-1}), \quad 0\leq
i\leq n-1,\quad 0\leq k \leq n-2,
$$
for some  $b^{(i)}_{ks}\in {\mathcal O}_{\chi (U)}$. Also,
$(f_{k}\circ \chi^{-1})(0)=0$ for any $0\leq k\leq n-2$. Applying
successively the uniqueness statement of the Cauchy-Kovalevskaia
theorem (in the form stated e.g. in \cite{hert-man-unfol},
relations (2.42) and (2.43), with no $(t^{i})$-parameters in the
notation of this reference), we obtain that $f_{k}=0$ on $U$, as
required.\end{proof}

The computations from the above proof imply the following
corollary.

\begin{cor}\label{lie-nilp}
Let $(M, \circ , e, E)$ be an $n$-dimensional globally nilpotent
regular $F$-manifold and $\{ X_{0}, \cdots , X_{n-1}\}$ its
canonical frame. Let $a\in {\mathcal O}_{M}$ be the eigenfunction
of  ${\mathcal U}= {\mathcal C}_{E}$. Then
\begin{equation}\label{br}
[X_{i}, X_{j}] = \begin{cases}
(j-i) X_{i+j-1}, \quad i+j \leq n\\
(i-j) \sum_{k=0}^{n-1} c_{k}^{(i+j-1-n)} a^{i+j-1-k} X_{k}, \quad i+j>n,\\
\end{cases}
\end{equation}
where $c_{k}^{(p)}$ ($p\geq 0$ and $0\leq k \leq n-1$)
are constants, defined inductively by $c_{k}^{(0)}
= (-1)^{n-k}C_{n}^{k}$ and  for any $s\geq 0$,
$c_{k}^{(s+1)} = c_{k-1}^{(s)} - c_{k}^{(0)}
c_{n-1}^{(s)}$ (when $k\geq 1$)  and $c_{0}^{(s+1)} = - c_{0}^{(0)} c_{n-1}^{(s)}$.
Moreover,
\begin{equation}\label{deriv}
 X_{i}(a) = a^{i}, \quad i\geq 0.
\end{equation}

\end{cor}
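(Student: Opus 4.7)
The plan is to derive both formulas from the computations already carried out in the proof of Proposition \ref{nilp}. Throughout, I will use the fact that for a globally nilpotent regular $F$-manifold the vanishing $f_k\equiv 0$ established there is equivalent to the explicit expressions $\lambda_k = C_n^k(-a)^{n-k}$, where $a=-\lambda_{n-1}/n$ is the common eigenvalue function of $\mathcal{U}$.

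First I would establish (\ref{deriv}). Specializing the formulas (\ref{0}), (\ref{1}), (\ref{2-prime}), (\ref{3}) for $X_i(\lambda_k)$ to $k=n-1$ and substituting the explicit values of $\lambda_{n-1}$, $\lambda_{n-2}$, $\lambda_{n-3}$ in terms of $a$, one reads off $X_i(\lambda_{n-1})=-na^i$, and therefore $X_i(a)=a^i$ for $i\in\{0,1,2,3\}$. To extend the statement to all $i\geq 0$ I apply the bracket relation $[X_2,X_i]=(i-2)X_{i+1}$ from (\ref{hertling-f}) to the function $a$: this yields $(i-2)X_{i+1}(a)=X_2(X_i(a))-X_i(X_2(a))$, and under the inductive hypothesis $X_i(a)=a^i$ together with the already-proved $X_2(a)=a^2$, the right-hand side evaluates to $i\, a^{i+1}-2\, a^{i+1}=(i-2)a^{i+1}$. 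For $i\geq 3$ we may divide, giving $X_{i+1}(a)=a^{i+1}$ and completing the induction.

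Next I would prove (\ref{br}). The first branch, $i+j\leq n$, is exactly (\ref{hertling-f}). For $i+j>n$, set $s=i+j-1-n\geq 0$; it suffices to show
\[
X_{n+s}=-\sum_{k=0}^{n-1} c_k^{(s)}\, a^{n+s-k}\, X_k,
\]
with $c_k^{(s)}$ defined by the recursion stated in the corollary. The base case $s=0$ is obtained from the Cayley--Hamilton-type relation (\ref{f}), namely $X_n=-\sum_k \lambda_k X_k$, combined with $\lambda_k = C_n^k(-a)^{n-k}$, which identifies $c_k^{(0)}=(-1)^{n-k}C_n^k$. For the inductive step I would compute $X_{n+s+1}=E\circ X_{n+s}=\mathcal{U}(X_{n+s})$; applying $\mathcal{U}$ term by term in the inductive expansion sends $X_k\mapsto X_{k+1}$, and when the index $k=n-1$ appears I re-expand $X_n$ via the base-case formula. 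Collecting the coefficient of $a^{n+s+1-k}X_k$ then reproduces precisely $c_k^{(s+1)}=c_{k-1}^{(s)}-c_k^{(0)}c_{n-1}^{(s)}$ for $k\geq 1$, and $c_0^{(s+1)}=-c_0^{(0)}c_{n-1}^{(s)}$ for the boundary case $k=0$. Substituting $X_{i+j-1}=X_{n+s}$ into $[X_i,X_j]=(j-i)X_{i+j-1}$ and absorbing the sign into the prefactor $(i-j)$ yields the second branch of (\ref{br}).

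The only point requiring care is the algebraic bookkeeping of the recursion for the $c_k^{(s)}$, but this amounts to a straightforward index shift once the base case is in place; no geometric input beyond the globally nilpotent form of $\mathcal{U}$ and the already-established relations (\ref{0})--(\ref{3}) is required.
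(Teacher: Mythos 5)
Your proposal is correct and follows essentially the same route as the paper: the first branch of (\ref{br}) is (\ref{hertling-f}), the second comes from iterating the relation $({\mathcal U}-a\,\mathrm{Id})^{n}=0$ (your Cayley--Hamilton base case with $\lambda_{k}=C_{n}^{k}(-a)^{n-k}$ is the same identity) to get $X_{n+s}=-\sum_{k}c_{k}^{(s)}a^{n+s-k}X_{k}$ with the stated recursion, and (\ref{deriv}) is read off from the derivatives of $\lambda_{n-1}$ computed in the proof of Proposition \ref{nilp} for $i\leq 3$ and then propagated via (\ref{hertling-f}). Your explicit use of $[X_{2},X_{i}]=(i-2)X_{i+1}$ for the inductive step on (\ref{deriv}) (valid since $i-2\neq 0$ for $i\geq 3$) just spells out the detail the paper leaves implicit.
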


\begin{proof}
Relation (\ref{br}) for $i+j\leq n$ is just (\ref{hertling-f}). We now prove (\ref{br}) for
$i+j > n$.
Since $({\mathcal U} - a\mathrm{Id})^{n}=0$,
\begin{equation}\label{u}
{\mathcal U}^{n}+ \sum_{k=0}^{n-1}c_{k}^{(0)} a^{n-k} {\mathcal U}^{k}=0.
\end{equation}
Multiplying the above relation with $\mathcal U$, $\mathcal U^{2}$, etc, and using
an induction argument, we obtain
$$
{\mathcal U}^{n+s} + \sum_{k=0}^{n-1} c_{k}^{(s)} a^{n-k+s}{\mathcal U}^{k}=0,\quad s\geq 0,
$$
or, equivalently,
\begin{equation}\label{x}
X_{n+s}  = -  \sum_{k=0}^{n-1} c_{k}^{(s)} a^{n-k+s}X_{k},\quad s\geq 0.
\end{equation}
Relations (\ref{hertling-f}) and (\ref{x}) imply (\ref{br}) for
$i+j>n$, as required.

It remains to prove (\ref{deriv}). With the notation from the
proof of Proposition \ref{nilp}, $a=- \frac{\lambda_{n-1}}{n}$ and
$\lambda_{k}= C_{n}^{k} (-a)^{n-k}$, for any $0\leq k\leq n-1$
(because $f_{k}=0$, $(M, \circ , e, E)$ being globally nilpotent).
On the other hand, in the proof of Proposition \ref{nilp} we
computed the following derivatives:
\begin{align*}
&X_{0}(\lambda_{n-1} )= -n,\ X_{1} (\lambda_{n-1}) = \lambda_{n-1}\\
&X_{2}(\lambda_{n-1})=-\lambda_{n-1}^{2}+2\lambda_{n-2}\\
&X_{3} (\lambda_{n-1})= \lambda_{n-1}^{3} + 3 \lambda_{n-3} - 3\lambda_{n-1}\lambda_{n-2}.
\end{align*}
These expressions, written in terms of $a$, give  (\ref{deriv}),
for $0\leq i\leq 3.$ Using (\ref{hertling-f})  we obtain
(\ref{deriv}), for any $i\geq 0$.
\end{proof}

\begin{rem}\label{notatie}{\rm  For the proof of Theorem \ref{main-0} (next section), it is  convenient to express the Lie
brackets $[X_{i}, X_{j}]$, computed in Corollary \ref{lie-nilp},
in a unified form (not as in (\ref{br}), where the cases $i+j\leq
n$ and $i+j>n$ are separated). This can be done as follows.
Consider the constants $c_{k}^{(p)}$ from Corollary
\ref{lie-nilp}. They were defined for  $p\geq 0$ and $0\leq k\leq
n-1$.  For $p<0$ (and $0\leq k\leq n-1$),  let $c_{k}^{(p)}:=0$,
unless $p=k-n$, in which case $c_{k}^{(k-n)}:=-1$. With this
notation, the two relations (\ref{br}) reduce to the single one
$$
[X_{i}, X_{j}] = (i-j) \sum_{k=0}^{n-1}
c_{k}^{(i+j-1-n)}a^{i+j-1-k} X_{k}, \quad i, j\geq 0.
 $$}
\end{rem}

\section{Proof of Theorem \ref{main-0}}\label{proof-section}

Using the material from the previous section, we now prove Theorem
\ref{main-0}. With the explanations from the introduction, the
only statements which  need to be proved are the uniqueness  (up to isomorphism) of the germ in Theorem \ref{main-0} ii) and the uniqueness  
of the isomorphisms in
Theorem \ref{main-0} i) and ii).

We begin by proving the uniqueness of the germ.
Consider two germs $((M, p),
\circ_{M}, e_{M}, E_{M})$ and $((N, q),\circ_{N}, e_{N},E_{N} )$
of  $n$-dimensional $F$-manifolds. Let  ${\mathcal
U}_{M}\in \mathrm{End}(TM)$ and ${\mathcal U}_{N}\in \mathrm{End}(TN)$  be the endomorphisms given by the multiplication with the Euler fields.
We assume that $({\mathcal U}_{M})_{p} : T_{p}M
\rightarrow T_{p}M$ and $({\mathcal U}_{N})_{q} : T_{q} N
\rightarrow T_{q} N$ are regular and belong to the same conjugacy
class. Our  aim is to show that  the two germs are isomorphic. 
Owing to
Hertling's decomposition of $F$-manifolds (see Theorem
\ref{hertling-thm}), we can (and will) assume that $(\mathcal U_{M})_{p}$ and $(\mathcal U_{N})_{q}$ 
have exactly one eigenvalue. From Proposition \ref{nilp}, the germs
$((M, p),\circ_{M} , e_{M}, E_{M})$ and
$((N,q),\circ_{N}, e_{N}, E_{N})$ are globally
nilpotent.  Let $a\in {\mathcal O}_{M}$ and $b\in {\mathcal O}_{N}$ 
be the eigenfunction of $\mathcal U_{M}$ and $\mathcal U_{N}$,  respectively. 
Since $(\mathcal U_{M})_{p}$ and $(\mathcal U_{N})_{q}$ belong to the same 
conjugacy class, $a(p) = b(q).$  We denote by  $\{ X_{i} := (E_{M})^{i}, 0\leq i\leq n-1 \}$ and 
$\{ Y_{i}:= (E_{N})^{i},\ 0\leq i\leq n-1\}$  the canonical frames
of the two germs.

\begin{prop}\label{conclusion} 
In the above setting, there is a biholomorphic transformation $\psi : (M,p) \rightarrow (N,q)$, 
such that  
$\psi_{*} (X_{i}) = Y_{i}$ for any $0\leq i\leq n-1$ and $b\circ \psi = a.$
\end{prop}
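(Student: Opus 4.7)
The plan is to realize $\psi$ as the graph of a maximal integral leaf of an involutive distribution on the product $M \times N$ near $(p, q)$. I lift $X_i$ and $Y_i$ to the product (denoting the lifts by the same letters) and set $Z_i := X_i + Y_i$ for $0 \leq i \leq n - 1$. The function $F(x, y) := a(x) - b(y)$ has differential that is nowhere zero near $(p, q)$ (since $X_0(a) = 1$ by (\ref{deriv})), so its zero set $S := F^{-1}(0)$ is a smooth hypersurface of dimension $2n - 1$ passing through $(p, q)$ by the hypothesis $a(p) = b(q)$.

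First I verify that each $Z_i$ is tangent to $S$: indeed $dF(Z_i) = X_i(a) - Y_i(b) = a^i - b^i$ vanishes on $S$. Hence $\Delta := \mathrm{span}\{Z_0, \ldots, Z_{n-1}\}$ is a rank-$n$ subbundle of $TS$. To check involutivity, I use the unified form of the bracket from Remark \ref{notatie} to compute
\[
[Z_i, Z_j] \;=\; [X_i, X_j] + [Y_i, Y_j] \;=\; (i - j)\sum_{k=0}^{n-1} c_k^{(i+j-1-n)} \bigl( a^{i+j-1-k} X_k + b^{i+j-1-k} Y_k \bigr).
\]
Restricting to $S$, where $a = b$, the right-hand side collapses to $(i - j)\sum_k c_k^{(i+j-1-n)} a^{i+j-1-k} Z_k \in \Delta$, so the Frobenius theorem yields a unique $n$-dimensional integral leaf $L \subset S$ through $(p, q)$. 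The projections $\pi_M \colon L \to M$ and $\pi_N \colon L \to N$ are local biholomorphisms near $(p, q)$, since their differentials carry the basis $\{Z_i\}$ of $T_{(p,q)}L$ to the frames $\{X_i\}$ and $\{Y_i\}$, which are bases of $T_p M$ and $T_q N$. Thus $L$ is the graph of a biholomorphism $\psi := \pi_N \circ (\pi_M|_L)^{-1} \colon (M, p) \to (N, q)$, and by construction $\psi_*(X_i) = Y_i$ (from the form of $Z_i$) and $b \circ \psi = a$ (from $L \subset S$).

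The decisive point is the involutivity calculation: it goes through only because (i) by Corollary \ref{lie-nilp} the structure functions of both canonical frames are given by the \emph{same} universal polynomial expressions in a single scalar function (the eigenfunction), and (ii) the relations $X_i(a) = a^i$ and $Y_i(b) = b^i$ are exactly what forces the $Z_i$ to preserve the auxiliary condition $a(x) = b(y)$ that equates those scalars. Without the global nilpotence established in Proposition \ref{nilp}, one could not isolate a single scalar invariant controlling all of the bracket coefficients, and this diagonal-lift strategy would not succeed.
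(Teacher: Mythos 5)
Your proof is correct and is essentially the paper's own argument in dual form: where the paper builds the graph of $\psi$ as an integral manifold of the Pfaffian system $\{(\vec{i})^{*}(\theta_{i}^{X}-\theta_{i}^{Y})\}$ on the hypersurface $S=\{a(x)=b(y)\}$, you use its annihilator, the diagonal distribution spanned by $Z_{i}=X_{i}+Y_{i}$, and the two key computations (tangency to $S$ via $X_{i}(a)=a^{i}$, and involutivity via the structure coefficients of Remark \ref{notatie} agreeing on $S$ because $a=b$ there) are the same. The vector-field formulation even streamlines the paper's rank count for $\mathcal{D}_{S}$, since linear independence of the $Z_{i}$ is immediate.
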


\begin{proof} The statement follows from the classification of $\{ e\}$-structures (see 
Theorem 4.1 of \cite{sternberg}, page 344).
For completeness of our exposition we present  the argument in detail, by adapting  the proof of Theorem 4.1 
of \cite{sternberg} to our setting. 
Let $\{ \omega_{i}^{X}\,\ 0\leq i\leq n-1\}$ and $\{\omega_{i}^{Y},\ 0\leq i\leq n-1 \}$ be the $1$-forms dual 
to $\{ X_{i},\ 0\leq i\leq n-1\}$  and $\{ Y_{i},\ 0\leq i\leq n-1\}$, respectively. 
From Remark \ref{notatie},  
\begin{equation}\label{lie-form}
d\omega^{X}_{k} =\sum_{0\leq i,j\leq n-1} (c^{X})_{ij}^{k} \omega^{X}_{i}\wedge \omega^{X}_{j},\quad 
(c^{X})_{ij}^{k} := -\frac{1}{2}(i-j) c_{k}^{(i+j-1-n)} a^{i+j-1-k}
\end{equation}
and similarly 
\begin{equation}\label{lie-form-1}
d\omega^{Y}_{k} =\sum_{0\leq i,j\leq n-1} (c^{Y})_{ij}^{k} \omega^{Y}_{i}\wedge \omega^{Y}_{j},\quad 
(c^{Y})_{ij}^{k} := -\frac{1}{2}(i-j) c_{k}^{(i+j-1-n)} b^{i+j-1-k}.
\end{equation}
From relation (\ref{deriv}),  
\begin{equation}\label{dab}
da = \sum_{i=0}^{n-1}a^{i} \omega_{i}^{X},\quad db = \sum_{i=0}^{n-1}b^{i} \omega_{i}^{Y}.
\end{equation}
In particular, $d_{p}a\in T_{p}^{*}M$ and $d_{q}b\in T_{q}^{*}N$ are non-trivial. We restrict $M$ and $N$ such that $d_{x} a\neq 0$ and $d_{y}b\neq 0$, for any
$x\in M$ and $y\in N.$ Let $\pi_{1}: M\times N \rightarrow M$ and $\pi_{2} : M\times N \rightarrow N$ be the natural projections, $\theta_{i}^{X}:= (\pi_{1})^{*} (\omega_{i}^{X})$
and $\theta_{i}^{Y}:= (\pi_{2})^{*} (\omega_{i}^{Y})$ for any $i$. Pulling back (\ref{dab}) to $M\times N$, we obtain
\begin{equation}\label{dab-1}
d (\pi_{1}^{*} a) = \sum_{i=0}^{n-1}(\pi_{1}^{*}a)^{i} \theta_{i}^{X},\quad d(\pi_{2}^{*}b) = \sum_{i=0}^{n-1}(\pi_{2}^{*}b)^{i} \theta_{i}^{Y}.
\end{equation}
Let $S$  be the $(2n-1)$-dimensional submanifold of $M\times N$, defined by
$$
S:= \{ (x,y)\in M\times N,\ a(x) = b(y) \} .
$$ 
Remark that $(p,q)\in S.$  Let $\vec{i}: S\rightarrow M\times N$ be the inclusion. We will show that 
$$
{\mathcal D}_{S}:= \mathrm{Span}\{ (\vec{i})^{*}( \theta_{i}^{X} - \theta_{i}^{Y}),\ 0\leq i\leq n-1\}\subset T^{*}S
$$
is a rank $(n-1)$ distribution on $S$. For this, we remark from  (\ref{dab-1}) that  
\begin{equation}\label{ts}
\theta_{0}^{X} - \theta_{0}^{Y} = d (\pi_{1}^{*} a - \pi_{2}^{*}b) -  \sum_{i=1}^{n-1}((\pi_{1}^{*}a)^{i} \theta_{i}^{X} - (\pi_{2}^{*}b)^{i} \theta_{i}^{Y}) .
\end{equation}
Restricting (\ref{ts}) to $TS$ and using that  $\pi_{1}^{*}a=\pi_{2}^{*}b$ on 
$S$ (and $d(\pi_{1}^{*} a -\pi_{2}^{*}b) =0$ on $TS$), 
we obtain that $(\vec{i})^{*} (\theta_{0}^{X} - \theta_{0}^{Y})$ is a linear combination of the remaining 
$(\vec{i})^{*} (\theta_{i}^{X} - \theta_{i}^{Y})$, $1\leq i\leq n-1.$ We deduce that 
$\mathrm{rank}( \mathcal D_{S}) \leq n-1.$ On the other hand, since $S$ is of codimension 
$1$ in $M\times N$, the kernel of  the map $(\vec{i})^{*}:T^{*}_{(x,y)}(M\times N)\rightarrow T^{*}_{(x,y)}S$ 
is one dimensional (for any $(x,y)\in S$). We deduce that  
the kernel of  the restriction of this map to $\mathrm{Span} \{ (\theta_{i}^{X} -\theta_{i}^{Y})_{(x,y)},\ 
0\leq i\leq n-1 \}\subset T^{*}_{(x,y)} (M\times N)$ is at most one dimensional.  
Since  $\{ (\theta_{i}^{X}- \theta_{i}^{Y})_{(x,y)}\} $ are linearly independent,  
we obtain that
$\mathrm{dim } (\mathcal D_{S})_{(x,y)} \geq  n-1.$
We conclude that $\mathrm{dim} (\mathcal D_{S})_{(x,y)} = n-1$, for any $(x,y)\in S$,
i.e. $\mathcal D_{S}$ is of rank $n-1$, as needed.  

We now prove that the distribution $\mathcal D_{S}$ is involutive.
Pulling back  the first relations (\ref{lie-form}) and 
(\ref{lie-form-1}) to $S$, 
and using $\pi_{1}^{*}(c^{X})_{ij}^{k} =\pi_{2}^{*} (c^{Y})_{ij}^{k}$ on $S$
(which follows from the definition of $(c^{X})_{ij}^{k}$ and $(c^{Y})_{ij}^{k}$  
and from $\pi_{1}^{*}a=\pi_{2}^{*}b$ on $S$), 
we obtain that 
$$
d(\theta_{k}^{X} - \theta_{k}^{Y}) = \sum_{i,j}\pi_{1}^{*}  (c^{X})^{k}_{ij} (\theta_{i}^{X} - \theta_{i}^{Y})\wedge 
\theta^{X}_{j} + \sum_{i,j}\pi_{2}^{*} (c^{X})^{k}_{ij} \theta^{Y}_{i} \wedge
(\theta^{X}_{j}-\theta^{Y}_{j}),\quad \forall k
$$
i.e.  $\mathcal D_{S}$ is an integrable distribution on $S$.
 
Consider now the integral submanifold  $S^{\prime} $ of  $\mathcal D_{S}$ which contains $(p,q)$.
It is of dimension $\mathrm{dim}(S) - \mathrm{rank} (\mathcal D_{S}) = (2n-1) - (n-1) = n$. 
We claim that 
$\{ (\theta_{i}^{X})\vert_{TS^{\prime}},\ 0\leq i\leq n-1\}$ is a basis of forms
on $S^{\prime}.$ 
Indeed,  
since the forms $\{ (\theta_{i}^{X})_{(x,y)}, (\theta_{i}^{X} - \theta_{i}^{Y})_{(x,y)},\ 0\leq i\leq n-1 \}$ 
are a  basis of $T^{*}_{(x,y)}(M\times N)$, their restriction to  $T_{(x,y)}S^{\prime}$
generate $T_{(x,y)}^{*}S^{\prime}$. Therefore,   
$\{ (\theta_{i}^{X})\vert_{T_{(x,y)}S^{\prime}},\ 0\leq i\leq n-1\}$ 
generate $T_{(x,y)}^{*}S^{\prime}$, hence form  a basis of $T_{(x,y)}^{*}S^{\prime}$ (because
$(\theta_{i}^{X} - \theta_{i}^{Y})_{(x,y)}$ vanishes on $T_{(x,y)}S^{\prime}=( {\mathcal D}_{S})_{(x,y)}$ and $\mathrm{dim}(S^{\prime}) =n$).
We proved that $\{ (\theta_{i}^{X})\vert_{TS^{\prime}},\ 0\leq i\leq n-1\}$ is a basis of forms
on $S^{\prime}$, as needed. 
From this fact and
$(\pi_{1}\vert_{S^{\prime}})^{*} (\omega_{i}^{X}) = (\theta_{i}^{X})\vert_{TS^{\prime}}$, we obtain
that $\pi_{1}\vert_{S^{\prime}} : S^{\prime} \rightarrow M$ is locally a biholomorphic 
transformation. 
A similar  argument shows that $\pi_{2}\vert_{S^{\prime}}: S^{\prime} \rightarrow N$ is also, locally,  a 
biholomorphic transformation.
We restrict the representatives $M$ and $N$ of the germs, such that $\pi_{1} : S^{\prime} \rightarrow M$ and $\pi_{2}: S^{\prime} \rightarrow N$ are biholomorphic transformations
and we define $\psi : =\pi_{2}\circ \pi_{1}^{-1}.$
Since $(p,q)\in S^{\prime}$, $\psi (p) =q.$
Since any $(x,y)\in S^{\prime}$ satisfies $a(x) = b(y)$, we obtain that $b\circ \psi = a.$ Since $\pi_{1}^{*} (\omega_{i}^{X}) = \pi_{2}^{*} (\omega_{i}^{Y})$ on $TS^{\prime}$, 
we obtain that  $\psi^{*} ( \omega_{i}^{Y}) = \omega_{i}^{X}$, i.e.  $\psi_{*} (X_{i}) = Y_{i}$, for any $0\leq i\leq n-1$.
\end{proof}

\begin{prop}\label{changed}
The map $\psi : ((M, p),\circ_{M}, e_{M}, E_{M}) \rightarrow  ((N, q),\circ_{N}, e_{N}, E_{N})$ from Proposition \ref{conclusion} is an isomorphism of germs of $F$-manifolds. 
\end{prop}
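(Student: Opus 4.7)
The plan is to verify that the biholomorphism $\psi$ constructed in Proposition \ref{conclusion} intertwines all three pieces of $F$-manifold structure: the unit $e$, the Euler field $E$, and the commutative associative product $\circ$. The first two are almost immediate from what we already have.

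First, since $X_0 = e_M$ and $Y_0 = e_N$ are the zeroth entries of the two canonical frames (i.e.\ the unit fields, by Definition \ref{can-frame}), and since $X_1 = E_M$, $Y_1 = E_N$, the relations $\psi_{*}(X_0) = Y_0$ and $\psi_{*}(X_1) = Y_1$ established in Proposition \ref{conclusion} give at once $\psi_{*}(e_M) = e_N$ and $\psi_{*}(E_M) = E_N$.

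Next, to prove that $\psi$ preserves the multiplication it is enough to check $\psi_{*}(X_i \circ_M X_j) = Y_i \circ_N Y_j$ for $0 \le i, j \le n-1$, because $\{X_i\}$ is a global frame on a representative of the germ $(M,p)$. By associativity of $\circ_M$ one has $X_i \circ_M X_j = X_{i+j}$ (the $(i{+}j)$-fold $\circ$-power of $E_M$), and similarly on $N$. For $i+j \le n-1$ the equality is already part of Proposition \ref{conclusion}. For $i+j = n+s$ with $s \ge 0$ I invoke the Cayley--Hamilton relation (\ref{x}) from Corollary \ref{lie-nilp}, which gives
\begin{equation*}
X_{n+s} = -\sum_{k=0}^{n-1} c_k^{(s)}\, a^{n-k+s}\, X_k, \qquad Y_{n+s} = -\sum_{k=0}^{n-1} c_k^{(s)}\, b^{n-k+s}\, Y_k,
\end{equation*}
with the \emph{same} universal constants $c_k^{(s)}$ on both sides. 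Pushing the first relation forward by $\psi$, and using that $\psi_{*}(X_k) = Y_k$ together with $\psi^{*} b = a$ (so $\psi^{*}(b^{n-k+s}) = a^{n-k+s}$), one obtains $\psi_{*}(X_{n+s}) = Y_{n+s} = Y_i \circ_N Y_j$, as required.

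The one point that requires a bit of care is the interaction between the frame-level identities $\psi_{*}(X_k) = Y_k$ and the function-level identity $b \circ \psi = a$, since they must be combined to transport the nonlinear scalar coefficients $a^{n-k+s}$ across $\psi$; the standard rule $\psi_{*}(fZ) = (f \circ \psi^{-1})\,\psi_{*}(Z)$ for a function $f$ and vector field $Z$ makes this bookkeeping routine. Beyond that, the argument is a direct verification, and no further obstacle is expected.
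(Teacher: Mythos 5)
Your proposal is correct and follows essentially the same route as the paper: reduce to showing $\psi_{*}(E_M^{\,i})=E_N^{\,i}$ for all $i$, handle $i\le n-1$ by Proposition \ref{conclusion}, and handle $i\ge n$ by noting that the coefficients expressing $E^{\,i}$ in the canonical frame depend only on the eigenvalue, which $\psi$ transports ($b\circ\psi=a$). The only cosmetic difference is that you cite the explicit Cayley--Hamilton expansion (\ref{x}) of Corollary \ref{lie-nilp}, whereas the paper phrases the same fact as the coincidence of the characteristic polynomials of $(\mathcal U_M)_x$ and $(\mathcal U_N)_{\psi(x)}$.
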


\begin{proof}
From Proposition \ref{conclusion}
the map $\psi$ preserves the unit and Euler fields.   It remains to check that it preserves the multiplications, too.
From regularity, this is equivalent to $\psi_{*} (E_{M}^{i}) = E_{N}^{i}$, for any $i\geq 0.$ The statement for $i\leq n-1$ follows 
from Proposition \ref{conclusion} (since $E_{M}^{i}= X_{i}$ and $E_{N}^{i}= Y_{i}$ for such $i$). We need to prove that $\psi_{*} (E_{M}^{i}) = E_{N}^{i}$  also for
$i\geq n.$ For this, we notice that  the characteristic polynomials
of $(\mathcal U_{M})_{x}$ and $({\mathcal U}_{N})_{\psi (x)}$
coincide, for any $x\in M$  (both $(\mathcal U_{M})_{x}$ and $({\mathcal
U}_{N})_{\psi (x)}$ are regular, defined on vector spaces of the
same dimension, with the same (unique) eigenvalue $a(x)=(b\circ
\psi )(x)$). Therefore, for any $i\geq n$, the coordinates of $(E_{M})_{x}^{i}$ in the basis
$\{ (X_{j})_{x},\ 0\leq j\leq n-1\}$  
coincide with the coordinates of $(E_{N})_{\psi (x)}^{i}$ in the basis
$\{ (Y_{j})_{\psi (x)},\  0\leq j\leq n-1\} .$  
Using that $\psi_{*} (X_{j}) = Y_{j}$ 
we deduce that $\psi_{*} ( (E_{M}^{i})_{x}) = (E_{N}^{i})_{\psi (x)}$, i.e.
$\psi_{*} (E^{i}_{M}) = E_{N}^{i}$, as needed.
\end{proof}

The uniqueness of the isomorphisms required by Theorem \ref{main-0} i) and ii) is a
consequence of the following simple lemma, which concludes the proof of Theorem \ref{main-0}.

\begin{lem}\label{unicitate-coord}
Any automorphism of a germ $((M, p), \circ , e, E)$ of regular
$F$-manifolds is the identity map.
\end{lem}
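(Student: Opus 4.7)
The plan is to exploit the fact that the canonical frame $\{X_0, X_1, \ldots, X_{n-1}\}$ is built from the structure that $\phi$ must preserve, so $\phi$ must preserve the frame pointwise, and then use flow-box reasoning to conclude $\phi=\mathrm{id}$.

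First, I would observe that any automorphism $\phi$ of the germ fixes $p$, preserves the Euler field $E$, and preserves the multiplication $\circ$. Consequently, $\phi$ preserves each power $E \circ \cdots \circ E = X_i$, that is, $\phi_{*}X_i = X_i$ for every $0 \le i \le n-1$. By regularity of $\mathcal{U}_p$, the covectors $(X_0)_p, \ldots, (X_{n-1})_p$ form a basis of $T_pM$, so the canonical frame trivializes $TM$ near $p$ and $\phi$ is an automorphism of this $\{e\}$-structure fixing $p$.

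Next, I would recall that $\phi_{*} X_i = X_i$ is equivalent to saying that $\phi$ intertwines the local flow $\Phi^{X_i}_t$ of $X_i$ with itself: $\phi \circ \Phi^{X_i}_t = \Phi^{X_i}_t \circ \phi$ on its domain. Since $\phi(p)=p$, iterating this yields
\[
\phi\bigl(\Phi^{X_0}_{t_0}\circ \Phi^{X_1}_{t_1}\circ\cdots\circ \Phi^{X_{n-1}}_{t_{n-1}}(p)\bigr) = \Phi^{X_0}_{t_0}\circ \Phi^{X_1}_{t_1}\circ\cdots\circ \Phi^{X_{n-1}}_{t_{n-1}}(p)
\]
for all sufficiently small $(t_0,\ldots,t_{n-1})\in\mathbb{C}^n$. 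The map $F:(t_0,\ldots,t_{n-1})\mapsto \Phi^{X_0}_{t_0}\circ\cdots\circ \Phi^{X_{n-1}}_{t_{n-1}}(p)$ has differential at $0$ equal to the linear map sending the standard basis to $((X_0)_p,\ldots,(X_{n-1})_p)$, which is an isomorphism by regularity. Hence $F$ is a local biholomorphism at $0$, and $\phi$ is the identity on its image, which is a neighborhood of $p$. This proves $\phi=\mathrm{id}$ as germs.

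The argument is essentially routine once one recognizes that no obstacle arises: the only mildly delicate point is justifying that composed flows of a frame sweep out a neighborhood of the basepoint, which is immediate from the inverse function theorem applied to $F$. No appeal to Hertling's decomposition, Cauchy--Kovalevskaia, or the global nilpotency material is needed; the proof rests solely on the existence of the canonical frame provided by Lemma \ref{ajut} i).
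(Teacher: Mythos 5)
Your proposal is correct and follows essentially the same route as the paper: an automorphism fixes each power $E^{\circ i}$, hence the canonical frame, hence commutes with the flows of the frame fields, and since it fixes $p$ it is the identity on the neighborhood swept out by composing those flows. Your write-up is in fact slightly more careful than the paper's (which asserts $\psi_*X=X$ for arbitrary $X$ without comment), since you restrict attention to the frame fields and justify via the inverse function theorem that the composed flows cover a neighborhood of $p$.
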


\begin{proof} Let $\psi$ be such an automorphism. Then $\psi_{*}(E^{i}) = E^{i}$
for any $i\geq 0.$ From regularity,  $\psi_{*} (X) =
X$, i.e. $\phi_{t}^{X}\circ \psi = \psi \circ \phi_{t}^{X}$, where 
$\phi_{t}^{X}$ is the flow of $X$ and $X\in{\mathcal T}_{M}$ is arbitrary. Since $\psi (p) =p$, we obtain
that $\psi $ is the identity map.\end{proof}

In the following sections we develop  applications of Theorem
\ref{main-0}.

\section{Frobenius metrics in canonical
coordinates}\label{frobenius-sect}

In this section we study Frobenius metrics in the coordinate
system provided by Theorem \ref{main-0} i). In Subsection
\ref{sub-euler} we express the conditions  which involve the unit
and Euler fields. The flatness of the metric will be treated in
Subsection \ref{section-flatness}.

\subsection{The unit and Euler fields}\label{sub-euler}

Let $M:= \mathbb{C}^{m_{1}} \times \cdots \times
\mathbb{C}^{m_{n}}$. We denote by $(t^{i (\alpha )})$ ($0\leq
i\leq m_{\alpha } -1$, $1\leq \alpha \leq n$) the canonical
coordinates on $M$ and by $\{
\partial_{i(\alpha )}:= \frac{\partial}{\partial t^{i(\alpha
)}}\}$ the associated vector fields. According to Theorem
\ref{main-0} i), the multiplication
$$
\partial_{i(\alpha)} \circ \partial_{j(\beta )}
=\begin{cases} \partial_{(i+j)(\alpha )}, \quad  \alpha
=\beta ,\  i+j \leq m_{\alpha}-1\\
0,\quad \text{otherwise},\\
\end{cases}
$$
and the vector field
$$
E = \sum_{\alpha = 1}^{n}\left(  (t^{0(\alpha )}+ a_{\alpha })
\partial_{0(\alpha )} + (t^{1(\alpha )}+ 1)
\partial_{1(\alpha )} + \sum_{i=2}^{m_{\alpha} -1} t^{i(\alpha )}\partial_{i(\alpha
)}\right)
$$
give $M$ the structure of an $F$-manifold, with unit field
$$
e= \sum_{\alpha =1}^{n}\partial_{0(\alpha )}.
$$
Any multiplication
invariant metric on $M$ is of the form
\begin{equation}\label{metrica-g}
g= \delta_{\alpha\beta }\eta_{(i+j)(\alpha )}
dt^{i(\alpha )}\otimes dt^{j(\beta )}
\end{equation}
for some functions $\eta_{i(\alpha )}$, where $1\leq \alpha ,\beta
\leq n$, $0\leq i\leq m_{\alpha } -1$ and $\eta_{i(\alpha )}=0$,
for $i\geq m_{\alpha }$. (To simplify notation, in
(\ref{metrica-g}) and in other places we omit the summation sign).

\begin{prop}\label{local-prop} i) The coidentity $e^{\flat} := g(e, \cdot )$ is closed
if and only if there is  a function $H$ (called a metric
potential) such that $\eta_{i(\alpha )} =\partial_{i(\alpha )}(H)$
for any $i(\alpha ).$\

ii) The unit field $e$ is flat (with respect to the Levi-Civita
connection $\nabla^{\mathrm{LC}}$ of $g$) if and only if
$d(e^{\flat} )=0$ and $e(\eta_{i (\alpha )})=0$, for any $i(\alpha
).$\

iii) The Euler field rescales $g$ (i.e. $L_{E}(g) = Dg$ for a
constant $D$) if and only if $E(\eta_{i(\alpha )})= (D-2)
\eta_{i(\alpha )}$ for any $i(\alpha ).$

\end{prop}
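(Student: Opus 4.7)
The plan is to verify each item by direct computation in the canonical coordinates $(t^{i(\alpha)})$, exploiting that the coefficient $g_{k(\gamma),l(\delta)} = \delta_{\gamma\delta}\,\eta_{(k+l)(\gamma)}$ depends only on the index sum $k+l$ within a single block.

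For (i), I would first compute $e^{\flat}$. Since $e = \sum_{\alpha}\partial_{0(\alpha)}$, we have $e^{\flat}(\partial_{l(\delta)}) = \sum_{\alpha}\delta_{\alpha\delta}\eta_{l(\alpha)} = \eta_{l(\delta)}$, so $e^{\flat} = \sum_{l,\delta}\eta_{l(\delta)}\,dt^{l(\delta)}$. The condition $de^{\flat}=0$ reads $\partial_{k(\gamma)}\eta_{l(\delta)} = \partial_{l(\delta)}\eta_{k(\gamma)}$ for all index pairs, which by the (holomorphic) Poincar\'e lemma on a polydisc is equivalent to the existence of a local potential $H$ with $\eta_{i(\alpha)} = \partial_{i(\alpha)}H$.

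For (ii), the key identity I would use is the consequence of Koszul's formula
\[
2g(\nabla^{\mathrm{LC}}_{X}e,Z) = de^{\flat}(X,Z) + (L_{e}g)(X,Z),\qquad X,Z\in\mathcal{T}_{M},
\]
valid whenever $e^{\flat}=g(e,\cdot)$. Since $de^{\flat}$ is antisymmetric and $L_{e}g$ is symmetric, $\nabla^{\mathrm{LC}}e=0$ is equivalent to both vanishing. Now $[e,\partial_{k(\gamma)}]=\sum_{\alpha}[\partial_{0(\alpha)},\partial_{k(\gamma)}]=0$, so
\[
(L_{e}g)(\partial_{k(\gamma)},\partial_{l(\delta)}) = e\!\left(g_{k(\gamma),l(\delta)}\right) = \delta_{\gamma\delta}\,e(\eta_{(k+l)(\gamma)}),
\]
which vanishes for all indices precisely when $e(\eta_{i(\alpha)})=0$ for every $i(\alpha)$. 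This proves (ii).

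For (iii), I would compute the bracket $[E,\partial_{j(\delta)}]$ directly. Writing $E = E^{i(\alpha)}\partial_{i(\alpha)}$, the components $E^{i(\alpha)}$ are each of the form $t^{i(\alpha)}+c_{i(\alpha)}$ with $c_{i(\alpha)}\in\mathbb{C}$, hence $\partial_{j(\delta)}E^{i(\alpha)} = \delta_{ij}\delta_{\alpha\delta}$ and $[E,\partial_{j(\delta)}] = -\partial_{j(\delta)}$. Consequently,
\[
(L_{E}g)(\partial_{k(\gamma)},\partial_{l(\delta)}) = E(g_{k(\gamma),l(\delta)}) + 2\,g_{k(\gamma),l(\delta)} = \delta_{\gamma\delta}\bigl(E(\eta_{(k+l)(\gamma)}) + 2\eta_{(k+l)(\gamma)}\bigr),
\]
so $L_{E}g = Dg$ is equivalent to $E(\eta_{i(\alpha)}) = (D-2)\eta_{i(\alpha)}$ for every $i(\alpha)$, as claimed.

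None of the three parts presents a real obstacle: the only point that requires care is the Koszul-type identity used in (ii), which must be justified once (it is essentially the decomposition of $2g(\nabla^{\mathrm{LC}}_{X}e,Z)$ into its antisymmetric and symmetric parts in $(X,Z)$), and the observation that the Euler field $E$ commutes ``affinely'' with each coordinate vector in the sense $[E,\partial_{j(\delta)}]=-\partial_{j(\delta)}$, which produces the $(D-2)$ shift in (iii).
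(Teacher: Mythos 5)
Your proof is correct and follows essentially the same route as the paper: the Poincar\'e lemma for (i), the equivalence $\nabla^{\mathrm{LC}}e=0\Leftrightarrow d(e^{\flat})=0$ and $L_{e}g=0$ for (ii), and the Lie-derivative computation using $[E,\partial_{j(\delta)}]=-\partial_{j(\delta)}$ for (iii). The only difference is that you supply the Koszul-formula justification for the equivalence in (ii) and write out (iii) explicitly, both of which the paper takes for granted.
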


\begin{proof}
Since $e= \sum_{\alpha =1}^{n}\partial_{0(\alpha )}$, the
coidentity is given by $e^{\flat} = \eta_{i(\alpha )} dt^{i(\alpha
)}$. It is closed if and only if it is exact, i.e. $e^{\flat} = d
H$, for a function $H.$ Claim i) follows. For claim ii), we use
that $\nabla^{\mathrm{LC}}(e)=0$ if and only if $d(e^{\flat})=0$
and $L_{e}(g)=0.$ But
\begin{align*}
L_{e} (g) (\partial_{i(\alpha )}, \partial_{j(\beta ) } ) &=
\delta_{\alpha \beta }e( \eta_{(i+j) (\alpha )} ) - g([e,
\partial_{i(\alpha )}], \partial_{j(\beta)})-  g(
\partial_{i(\alpha )}, [e,\partial_{j(\beta)}])\\
&= \delta_{\alpha \beta }e( \eta_{(i+j) (\alpha )} ),
\end{align*}
where in the second line we used $e= \sum_{\alpha
=1}^{n}\partial_{0(\alpha )}$ and the fact that the vector fields
$\{
\partial_{i(\alpha )}\}$ commute. Claim ii) follows. Claim iii)
follows equally easy.\end{proof}

\subsection{The flatness condition}\label{section-flatness}

It remains to study the flatness. For this, let us consider again
Dubrovin's description of  Frobenius metrics on constant (not
necessarily regular) $F$-manifolds, recalled in Subsection
\ref{constant-dub}. In Lemma \ref{general} we prove that the
rotation coefficient operator is determined (modulo a term
${\mathcal C}_{X}$, for $X\in {\mathcal T}_{N}$) by the Frobenius
metric. Therefore, the generalized Darboux-Egoroff equations
(\ref{gen-darboux}) may be written directly in terms of the metric
(rather than the rotation coefficient operator). Proposition
\ref{corolar-general} below is a rewriting of Theorem 3.1 of
\cite{dubrovin}. We will apply it in order to obtain a description
of Frobenius metrics on regular, globally nilpotent $F$-manifolds
(see Theorem \ref{flat-gen}).

We use the notation from Subsection \ref{constant-dub}. In
particular, we identify $TN$ with $T^{*}N$ using $\epsilon =
\epsilon_{ij}dt^{i}\otimes dt^{j}$. We denote by $\epsilon^{-1} :
T^{*}N \rightarrow TN$ this isomorphism. The induced metric on
$T^{*}N$ will also be denoted by $\epsilon .$ It is given by
$\epsilon = \epsilon^{ij} \partial_{i}\otimes
\partial_{j}$ where $(\epsilon^{ij})$ is the inverse of
$(\epsilon_{ij})$.

\begin{lem}\label{general}
Let $(N,\circ , e)$ be a constant $F$-manifold, with constant
multiplication invariant metric $\epsilon\in S^{2}(T^{*}N)$, $\psi
= \psi_{j} dt^{j}\in \Omega^{1} (N)$  an invertible $1$-form and
$T= \epsilon^{-1}(\psi )\in {\mathcal T}_{N}$ the $\epsilon$ dual
vector field. There is an $\epsilon$ symmetric endomorphism
$\tilde{\gamma} \in \mathrm{End}(TN)$ which satisfies
\begin{equation}\label{necesitate-1}
L_{\partial_{i}} (\psi ) = \psi [ {\mathcal C}_{i}, \tilde{\gamma}
],\ \forall i,
\end{equation}
if and only if $\epsilon (\psi , \psi )$ is constant and the
endomorphism $\gamma \in \mathrm{End}(TN)$, defined by
\begin{equation}\label{gamma-def}
\gamma = L_{\epsilon^{-1}(dt^{i})}(\psi )\otimes
(\partial_{i}\circ T^{-1}),
\end{equation}
is $\epsilon$ symmetric. If $\epsilon (\psi ,\psi )$ is constant
and $\gamma$ is $\epsilon$ symmetric, then $\gamma$ satisfies
(\ref{necesitate-1}), and, moreover,  any other $\epsilon$
symmetric endomorphism $\tilde{\gamma}$, which satisfies
(\ref{necesitate-1}), is of the form $\tilde{\gamma} = \gamma +
{\mathcal C}_{X}$, for $X\in {\mathcal T}_{N}$.
\end{lem}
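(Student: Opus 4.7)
I would reduce the equivalence to two key observations that separate the analytic content from the bookkeeping. The first is a reformulation that replaces equation (\ref{necesitate-1}) for the specific endomorphism $\gamma$ from (\ref{gamma-def}) by the scalar condition $L_{e}\psi = 0$. The second is the identification of the kernel of the linear map $\delta \mapsto (\psi [{\mathcal C}_{i},\delta])_{0\leq i\leq n-1}$ from ${\rm End}(TN)$ to tuples of $1$-forms, which I claim equals $\{ {\mathcal C}_{X} : X\in {\mathcal T}_{N}\}$. Once both are established, the "if and only if" and the uniqueness statement drop out in a few lines.

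First I would unwind the formula (\ref{gamma-def}). Setting $\eta_{j} := L_{\partial_{j}}\psi$ and $\zeta_{Y} := \eta_{j}(Y)\,dt^{j}$, one has $\gamma (Y) = \epsilon^{-1}(\zeta_{Y})\circ T^{-1}$. Since $\epsilon$ is multiplication invariant, each ${\mathcal C}_{k}$ is $\epsilon$-symmetric, and using $T\circ T^{-1} = e$ together with associativity one computes
\[
\psi (\partial_{k}\circ \gamma Y) = \epsilon (T\circ \partial_{k}\circ T^{-1},\, \epsilon^{-1}(\zeta_{Y})) = \epsilon (\partial_{k},\, \epsilon^{-1}(\zeta_{Y})) = \eta_{k}(Y),
\]
and similarly $\psi (\gamma Z) = \zeta_{Z}(e) = (L_{e}\psi )(Z)$ for every $Z$. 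Subtracting yields
\[
(\psi [{\mathcal C}_{k},\gamma ])(Y) = \eta_{k}(Y) - (L_{e}\psi )(\partial_{k}\circ Y);
\]
since every vector $W$ can be written as $W = e\circ W = e^{k}(\partial_{k}\circ W)$, the right-hand side vanishes for all $(k,Y)$ if and only if $L_{e}\psi = 0$. Thus $\gamma$ satisfies (\ref{necesitate-1}) precisely when $L_{e}\psi = 0$.

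For the kernel, suppose $\psi [{\mathcal C}_{i},\delta ] = 0$ for all $i$. Expanding in an arbitrary $Z = Z^{i}\partial_{i}$ gives $\psi (Z\circ \delta Y) = \psi (\delta (Z\circ Y))$ for all $Y,Z$. Setting $Y = e$ and $X := \delta e$ yields $\psi (Z\circ X) = \psi (\delta Z)$, so $R := \delta - {\mathcal C}_{X}$ satisfies $\psi \circ R = 0$. Using $[{\mathcal C}_{Z},{\mathcal C}_{X}]=0$ one reruns the same identity for $R$ and obtains $\psi (Z\circ RY) = \psi (R(Z\circ Y)) = 0$ for all $Y,Z$. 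Because the map $W\mapsto \epsilon (T\circ W,\cdot )$ from $TN$ to $T^{*}N$ is an isomorphism (invertibility of $T$, non-degeneracy of $\epsilon$), the vanishing of the $1$-form $\psi (\cdot \circ RY) = \epsilon (T\circ \cdot ,RY)$ forces $RY = 0$, hence $\delta = {\mathcal C}_{X}$.

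With these two tools the remaining assembly is short. For necessity, if $\tilde{\gamma}$ is $\epsilon$-symmetric and satisfies (\ref{necesitate-1}), then $[{\mathcal C}_{i},\tilde{\gamma}]$ is $\epsilon$-antisymmetric, so $\partial_{i}(\epsilon (\psi ,\psi )) = 2\psi ([{\mathcal C}_{i},\tilde{\gamma}]T) = 2\epsilon (T, [{\mathcal C}_{i},\tilde{\gamma}]T) = 0$; moreover, the linear combination of (\ref{necesitate-1}) corresponding to ${\mathcal C}_{e} = \mathrm{Id}$ gives $L_{e}\psi = \psi [\mathrm{Id},\tilde{\gamma}] = 0$. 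By the first observation $\gamma$ then satisfies (\ref{necesitate-1}), and the kernel computation yields $\gamma = \tilde{\gamma} + {\mathcal C}_{X}$, which is $\epsilon$-symmetric since both summands are. For sufficiency, constancy of $\epsilon (\psi ,\psi )$ gives $\eta_{j}(T) = \tfrac{1}{2}\partial_{j}\epsilon (\psi ,\psi ) = 0$, hence $\zeta_{T} = 0$ and $\gamma T = 0$; combined with the $\epsilon$-symmetry of $\gamma$ this forces $(L_{e}\psi )(Z) = \psi (\gamma Z) = \epsilon (\gamma T, Z) = 0$, so $L_{e}\psi = 0$ and $\gamma$ satisfies (\ref{necesitate-1}) by the first observation. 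The final assertion on the form of any admissible $\tilde{\gamma}$ is then the kernel statement. The main technical hurdle is the identity $\psi (\partial_{k}\circ \gamma Y) = \eta_{k}(Y)$, which is precisely what makes the otherwise opaque definition (\ref{gamma-def}) of $\gamma$ work.
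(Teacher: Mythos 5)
Your proof is correct, and it takes a genuinely different (though closely parallel) route from the paper's. The paper's pivot is a reconstruction formula, valid for every $\epsilon$-symmetric endomorphism $\tilde{\gamma}$, which recovers $\tilde{\gamma}$ from the $1$-forms $\psi [{\mathcal C}_{k},\tilde{\gamma}]$ up to the correction term ${\mathcal C}_{\tilde{\gamma}(T)\circ T^{-1}}$; necessity, sufficiency and the uniqueness clause are all obtained by feeding (\ref{necesitate-1}) into that single identity. You replace this by two separate tools: the identity $(\psi [{\mathcal C}_{k},\gamma ])(Y)=(L_{\partial_{k}}\psi )(Y)-(L_{e}\psi )(\partial_{k}\circ Y)$, which isolates the scalar condition $L_{e}\psi =0$ as the exact obstruction for $\gamma$ itself to satisfy (\ref{necesitate-1}), and the identification of the kernel of $\delta \mapsto (\psi [{\mathcal C}_{i},\delta ])_{i}$ on all of $\mathrm{End}(TN)$ with $\{ {\mathcal C}_{X}\}$. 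The computational core is shared (multiplication invariance of $\epsilon$, the relation $\psi =\epsilon (T,\cdot )$, skew-symmetry of $[{\mathcal C}_{i},\tilde{\gamma}]$ forcing $\epsilon (\psi ,\psi )$ to be constant, and the vanishing of $\gamma (T)$), but your decomposition buys a cleaner logical structure and a marginally stronger uniqueness statement, since your kernel computation needs no symmetry hypothesis on $\delta$; the paper's formula, in exchange, exhibits the vector field $X$ explicitly as $\tilde{\gamma}(T)\circ T^{-1}$. One small point worth making explicit: both your use of ${\mathcal C}_{e}=\mathrm{Id}$ as a constant-coefficient linear combination of the equations (\ref{necesitate-1}) and the identity $(L_{e}\psi )(Z)=\zeta_{Z}(e)$ rely on the components of $e$ in canonical coordinates being constant, which does hold for a constant $F$-manifold but deserves a sentence.
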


\begin{proof}
We divide the proof into several steps.\

{\bf Step 1.} We claim that the operator $\gamma$ defined by
(\ref{gamma-def}) satisfies
\begin{equation}\label{gamma-T}
\gamma (T)= \frac{1}{2} \epsilon^{-1} (dt^{i}) (\epsilon (\psi ,
\psi )) \partial_{i} \circ T^{-1}.
\end{equation}
To prove (\ref{gamma-T}), we use the definition of $\gamma$ and
$L_{\partial_{i}} (\epsilon ) =0$:
\begin{align*}
\gamma (T) & = L_{\epsilon^{-1}(dt^{i})} (\psi ) (T)
\partial_{i}\circ T^{-1} = \epsilon ( L_{\epsilon^{-1}(dt^{i})} (\psi )
, \psi )\partial_{i}\circ T^{-1}\\
&= \frac{1}{2} \epsilon^{-1}(dt^{i}) (\epsilon (\psi , \psi ))
\partial_{i}\circ T^{-1}.
\end{align*}

{\bf Step 2.} We claim that any $\epsilon$ symmetric endomorphism
$\tilde{\gamma}$  of $TN$ satisfies
\begin{equation}\label{proprietate-generala}
\tilde{\gamma } = \epsilon^{ik} \psi [{\mathcal C}_{k},
\tilde{\gamma} ] (\partial_{j}) dt^{j}\otimes (\partial_{i} \circ
T^{-1}) + {\mathcal C}_{\tilde{\gamma}(T)\circ T^{-1}}.
\end{equation}
To prove (\ref{proprietate-generala}), let $\tilde{\gamma}$ be
such an endomorphism. Using that ${\mathcal C}_{i}$ and
$\tilde{\gamma}$ are $\epsilon$ symmetric, we obtain
\begin{equation}\label{calcul}
\psi [ {\mathcal C}_{i}, \tilde{\gamma} ] (\partial_{j}) =
\epsilon (T, [ {\mathcal C}_{i},\tilde{\gamma} ] (\partial_{j}))=
\epsilon (T\circ \tilde{\gamma} (\partial_{j}) - \tilde{\gamma}
(T) \circ\partial_{j},\partial_{i}).
\end{equation}
From $X = \epsilon^{ik}\epsilon (X, \partial_{k})
\partial_{i}$, for any $X\in {\mathcal T}_{N}$,  and relation (\ref{calcul}), we obtain
\begin{align*}
T\circ \tilde{\gamma} (\partial_{j}) - \tilde{\gamma} (T)\circ
\partial_{j}&= \epsilon^{ik} \epsilon ( T\circ \tilde{\gamma}
(\partial_{j}) - \tilde{\gamma} (T)\circ \partial_{j},\partial_{k}) \partial_{i}\\
&= \epsilon^{ik} \psi [{\mathcal C}_{k}, \tilde{\gamma} ]
(\partial_{j})\partial_{i},
\end{align*}
which implies (\ref{proprietate-generala}).\

{\bf Step 3.} We claim that if there is an $\epsilon$ symmetric
endomorphism $\tilde{\gamma}$ of $TN$, which satisfies
(\ref{necesitate-1}), then  $\epsilon (\psi , \psi )$ is constant
and the operator $\gamma$, defined by (\ref{gamma-def}),  is
$\epsilon$ symmetric. Let $\tilde{\gamma}$ be such an
endomorphism. From (\ref{necesitate-1}) and $L_{\partial_{i}}
(\epsilon )=0$,
\begin{equation}\label{partial-i}
\frac{1}{2} \partial_{i}( \epsilon (\psi , \psi ))= \epsilon
(L_{\partial_{i}} (\psi ), \psi ) = \epsilon ( \psi [ {\mathcal
C}_{i}, \tilde{\gamma }], \psi ) = -\epsilon ( [{\mathcal C}_{i},
\tilde{\gamma}](T), T) =0.
\end{equation}
(In the third equality we used that $[{\mathcal C}_{i},
\tilde{\gamma}]\in \mathrm{End}(TN)$ is $\epsilon$ skew-symmetric;
owing to this, the $1$-form $\psi [ {\mathcal C}_{i},
\tilde{\gamma }]\in \Omega^{1} (N)$ is $\epsilon$ dual to $- [
{\mathcal C}_{i}, \tilde{\gamma }](T)\in {\mathcal T}_{N}$. In the
fourth equality we used again that $[{\mathcal C}_{i},
\tilde{\gamma}]$ is $\epsilon$ skew-symmetric). Relation
(\ref{partial-i}) shows that $\epsilon (\psi , \psi )$ is
constant. Using (\ref{proprietate-generala}) ($\tilde{\gamma}$ is
$\epsilon$ symmetric), (\ref{necesitate-1}) and
$\epsilon^{-1}(dt^{i}) =\epsilon^{ik}\partial_{k}$, we obtain
\begin{equation}\label{t}
\tilde{\gamma} = \epsilon^{ik}\partial_{k} (\psi_{j} )
dt^{j}\otimes (\partial_{i}\circ T^{-1}) +{\mathcal
C}_{\tilde{\gamma} (T) \circ T^{-1}} = \gamma +{\mathcal
C}_{\tilde{\gamma} (T) \circ T^{-1}}.
\end{equation}
Since $\tilde{\gamma}$ is $\epsilon$ symmetric, so is $\gamma .$
Our claim follows.\

{\bf Step 4.}  We assume that  $\epsilon (\psi , \psi )$ is
constant and  $\gamma$ is $\epsilon$ symmetric. We claim that
$\gamma$ satisfies (\ref{necesitate-1}). Since $\epsilon (\psi ,
\psi )$ is constant, $\gamma (T)=0$ (see relation
(\ref{gamma-T})). Since $\gamma$ is symmetric and $\gamma (T)=0$,
relation (\ref{proprietate-generala}) implies that
\begin{equation}\label{doi}
\gamma = \epsilon^{ik} \psi [ {\mathcal C}_{k}, \gamma
](\partial_{j}) dt^{j}\otimes ( \partial_{i}\circ T^{-1}) .
\end{equation}
On the other hand, from its definition (\ref{gamma-def}),
\begin{equation}\label{doi-1}
\gamma = \epsilon^{ik} \partial_{k}(\psi_{j})dt^{j}\otimes
(\partial_{i}\circ T^{-1}).
\end{equation}
Combining (\ref{doi}) with (\ref{doi-1}) we obtain that
$\partial_{k} (\psi_{j} ) =\psi [ {\mathcal C}_{k}, \gamma
](\partial_{j})$, i.e.  $\gamma$ satisfies (\ref{necesitate-1}),
as claimed.\

{\bf Step 5.} In the hypothesis from Step 4, we claim that any
other $\epsilon$ symmetric endomorphism $\tilde{\gamma}$, which
satisfies (\ref{necesitate-1}), is equal to $\gamma + {\mathcal
C}_{\tilde{\gamma }(T) \circ T^{-1}}.$ Let $\tilde{\gamma}$ be
such an endomorphism. Since it is $\epsilon$ symmetric, it
satisfies (\ref{proprietate-generala}). Using
(\ref{necesitate-1}), relation (\ref{proprietate-generala})
becomes $\tilde{\gamma} = \gamma  +{\mathcal C}_{\tilde{\gamma}(T)
\circ T^{-1}}$, as needed.
\end{proof}

\begin{prop}\label{corolar-general}
Let $(N, \circ ,e )$ be a constant $F$-manifold and $\epsilon\in
S^{2} (T^{*}N)$ a constant, multiplication invariant metric. Let
$\psi =\psi_{j} dt^{j}\in \Omega^{1} (N)$ be an invertible
$1$-form. Then the metric
$$
g(X ,Y):= (\psi\circ \psi )(X\circ Y)
$$
is Frobenius on $(N, \circ , e)$ if and only if $\epsilon (\psi ,
\psi )$ is constant and the endomorphism
\begin{equation}
\gamma = L_{\epsilon^{-1}(dt^{i})}(\psi )\otimes
(\partial_{i}\circ T^{-1})
\end{equation}
is $\epsilon$ symmetric and satisfies the generalized
Darboux-Egoroff equations (\ref{gen-darboux}). Above $T =
\epsilon^{-1} (\psi )\in {\mathcal T}_{N}$ is  $\epsilon$ dual to
$\psi .$
\end{prop}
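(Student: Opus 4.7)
The idea is to reduce Proposition \ref{corolar-general} to the combination of Dubrovin's Theorem \ref{dub-thm} with Lemma \ref{general}, using one auxiliary observation: the generalized Darboux--Egoroff equations (\ref{gen-darboux}) are invariant under $\gamma \mapsto \gamma + \mathcal{C}_X$ for every $X \in \mathcal{T}_N$. This invariance follows from the constancy of $\circ$ in canonical coordinates. Commutativity and associativity of $\circ$ give $\mathcal{C}_i \mathcal{C}_X = \mathcal{C}_{\partial_i \circ X} = \mathcal{C}_X \mathcal{C}_i$, so $[\mathcal{C}_i, \mathcal{C}_X]=0$; this forces $[\mathcal{C}_i, \gamma + \mathcal{C}_X] = [\mathcal{C}_i, \gamma]$ and neutralises any change in the quadratic bracket term of (\ref{gen-darboux}). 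The constancy of the structure constants also gives $L_{\partial_j}(\mathcal{C}_X) = \mathcal{C}_{\partial_j X}$, whence $[\mathcal{C}_i, L_{\partial_j}(\mathcal{C}_X)] = 0$, neutralising the change in the derivative terms; so the left-hand side of (\ref{gen-darboux}) coincides for $\gamma$ and $\gamma + \mathcal{C}_X$.

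For the direction ``$\Leftarrow$'', under the stated hypotheses Step 4 of Lemma \ref{general} yields at once that the canonical $\gamma$ of (\ref{gamma-def}) satisfies the compatibility (\ref{necesitate-1}) with the given $\psi$. Together with the assumed Darboux--Egoroff property, this exhibits $\gamma$ as a rotation coefficient operator satisfying (\ref{necesitate}) with $\psi$, so Theorem \ref{dub-thm} delivers that the metric $g = (\psi \circ \psi)(X \circ Y)$ is Frobenius.

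For the direction ``$\Rightarrow$'', assume $g$ is Frobenius. Applying the converse part of Theorem \ref{dub-thm} with our given $\psi$ (the only obstruction would be a different square root of the coidentity $g(e,\cdot) = \psi\circ\psi$, but one is free to take that square root equal to $\psi$ itself), one obtains a rotation coefficient operator $\tilde{\gamma}$ satisfying $L_{\partial_i}(\psi) = \psi[\mathcal{C}_i, \tilde{\gamma}]$. Then Lemma \ref{general} (its ``only if'' part, via Step 3) supplies three conclusions simultaneously: $\epsilon(\psi,\psi)$ is constant, the canonical $\gamma$ of (\ref{gamma-def}) is $\epsilon$-symmetric, and $\tilde{\gamma} = \gamma + \mathcal{C}_X$ for some $X \in \mathcal{T}_N$. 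The invariance observation then transfers (\ref{gen-darboux}) from $\tilde{\gamma}$ to $\gamma$, concluding the argument.

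The principal subtlety is the parenthetical remark in the preceding paragraph: in invoking the converse of Dubrovin's theorem one wants to insist that the produced 1-form coincide with our $\psi$. This is a point about extracting a distinguished ``square root'' of the coidentity $g(e,\cdot)$ in the commutative (possibly non-reduced) algebra $(T^*N, \circ)$, and it is the only step beyond a direct assembly of Lemma \ref{general} with the bracket invariance computed above.
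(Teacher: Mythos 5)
Your proof is correct and follows essentially the same route as the paper: combine Dubrovin's Theorem \ref{dub-thm} with Lemma \ref{general}, and use that any two admissible rotation coefficient operators differ by a term ${\mathcal C}_{X}$, which leaves the generalized Darboux--Egoroff equations unchanged. Your explicit verification of this invariance (via $[{\mathcal C}_{i},{\mathcal C}_{X}]=0$ and $L_{\partial_{j}}({\mathcal C}_{X})={\mathcal C}_{[\partial_{j},X]}$) is a detail the paper compresses into a single ``Therefore'', and the square-root ambiguity you flag when invoking the converse of Theorem \ref{dub-thm} with the \emph{given} $\psi$ is present, and likewise left unaddressed, in the paper's own proof.
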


\begin{proof} From Theorem \ref{dub-thm},
$g$ is Frobenius if and only if there is an $\epsilon$ symmetric
endomorphism $\tilde{\gamma} \in \mathrm{End}(TN)$ (a rotation
coefficient operator), which satisfies (\ref{necesitate-1}) and
the generalized Darboux-Egoroff equations (\ref{gen-darboux}).
From Lemma \ref{general}, the existence of an $\epsilon$ symmetric
endomorphism  $\tilde{\gamma}$, which satisfies
(\ref{necesitate-1}), is equivalent to the $\epsilon$ symmetry of
$\gamma$ and to $\epsilon (\psi , \psi )$ being constant. Suppose
that these equivalent conditions hold. From Lemma \ref{general}
again, $\gamma = \tilde{\gamma} - {\mathcal
C}_{\tilde{\gamma}(T)\circ T^{-1}}$. Therefore, $\tilde{\gamma}$
satisfies the generalized Darboux-Egoroff equations if and only if
$\gamma$ does. Our claim follows.
\end{proof}

\begin{rem}\label{gamma-alternativ}{\rm There is  an alternative formula for the endomorphism $\gamma$
from Lemma \ref{general}, which is  more suitable for
computations. Let $c_{ij}^{k}$ and $c^{ij}_{k}$ be the structure
constants, in canonical coordinates $(t^{i})$, of the
multiplications on $TN$ and $T^{*}N$. We claim that
\begin{equation}\label{gamma-def-alt}
\gamma = \partial_{k}(\psi_{j}) \beta_{s} \epsilon^{ik} c_{i}^{st}
dt^{j}\otimes\partial_{t}  ,
\end{equation}
where $\beta \in \Omega^{1} (N)$ is the inverse of $\psi$.
Relation (\ref{gamma-def-alt}) is obtained from the following
computation: from  (\ref{gamma-def}),
\begin{align*}
\gamma  &= \epsilon^{ik}
\partial_{k} (\psi_{j}) dt^{j}\otimes(\partial_{i} \circ \epsilon^{-1}(\beta ))  =
\epsilon^{ik} \partial_{k} (\psi_{j}) \beta_{s}dt^{j}\otimes
(\partial_{i}\circ \epsilon^{-1} (dt^{s}))\\
&= \partial_{k} (\psi_{j}) \beta_{s}\epsilon^{ik} \epsilon^{sf}
dt^{j} \otimes (\partial_{i}\circ \partial_{f})  =
\partial_{k} (\psi_{j}) \beta_{s}\epsilon^{ik} \epsilon^{sf}
c_{if}^{t} dt^{j}\otimes\partial_{t}   \\
&= \partial_{k} (\psi_{j} )\beta_{s}\epsilon^{ik} c^{st}_{i}
dt^{j}\otimes\partial_{t},
\end{align*}
where we used $\epsilon^{-1} (dt^{s}) = \epsilon^{sf}
\partial_{f}$,  $T^{-1}= \epsilon^{-1}(\beta )$ and
$c_{i}^{st} = \epsilon^{sf} c_{if}^{t}$.}
\end{rem}

We now return to the setting of regular $F$-manifolds. For
simplicity, we assume that $(M, \circ , e ,E)$ is globally
nilpotent (and regular, of dimension $m$). Let $(t^{0},\cdots ,
t^{m-1})$ be the coordinate system of $M$ provided by Theorem
\ref{main-0} i) and $\epsilon \in S^{2}(T^{*}M)$  the
(multiplication invariant) metric given by
\begin{equation}\label{def-ep}
\epsilon = \epsilon_{ij} dt^{i}\otimes dt^{j},\
\epsilon_{ij}=\epsilon (\partial_{i},\partial_{j}):= \delta_{i+j,
m-1}.
\end{equation}
We identify $TM$ with $T^{*}M$ using $\epsilon .$ The induced
multiplication on $T^{*}M$ is given by $dt^{i} \circ dt^{j} =
dt^{i+j- (m-1)}$ (with the convention $dt^{s}=0$ when $s\geq m$ or
$s< 0$) and $dt^{m-1}$ is the unit. A $1$-form $\psi =
\psi_{j}dt^{j}\in \Omega^{1} (M)$ is invertible if and only if
$\psi_{m-1}$ is non-vanishing. If $\psi$ is invertible and $\beta
= \beta_{j} dt^{j}$ is its inverse, then
\begin{equation}
\beta_{m-1} \psi_{m-1} =1,\quad \sum_{r+s =k}\beta_{s} \psi_{r}
=0,\ m-1\leq  k< 2(m-1).
\end{equation}

The following theorem is our main result from this section.

\begin{thm}\label{flat-gen} Let $(M, \circ , e, E)$ be a regular,
globally nilpotent, $m$-dimensional $F$-manifold, with fixed
constant metric $\epsilon \in S^{2}(T^{*}M)$ given by
(\ref{def-ep}). Let $g$ be a multiplication invariant metric,
given by
\begin{equation}\label{g-i-j}
g = \eta_{i+j} dt^{i}\otimes dt^{j}.
\end{equation}
We fix a branch of $(\eta_{m-1})^{1/2}.$\

i) There is a unique invertible $1$-form $\psi =\psi_{j}dt^{j} \in
\Omega^{1} (M)$, related to $g$ by
\begin{equation}
g (X, Y)= (\psi\circ \psi )(X\circ Y),\ X, Y\in TM.
\end{equation}
Its $(m-1)$-component is given by $\psi_{m-1} =(\eta_{m-1})^{1/2}$
and its remaining components are determined inductively by the
conditions:
\begin{equation}
\sum_{s+t = (m-1) +k} \psi_{s}\psi_{t} = \eta_{k},\ 0\leq k\leq
m-2.
\end{equation}
ii) The metric $g$ is Frobenius on $(M, \circ , e)$ if and only if
$$
\epsilon (\psi, \psi ) = \sum_{i+j=m-1} \psi_{i}\psi_{j}
$$
is
constant and
$$
\gamma := \sum_{j}\sum_{i\leq s} \beta_{s}
\partial_{m-1-i} (\psi_{j})
\partial_{m-1+i-s}\otimes dt^{j}
$$
is $\epsilon$ symmetric and satisfies the generalized
Darboux-Egoroff equations (\ref{gen-darboux}), where $\beta :=
\beta_{j}dt^{j} \in \Omega^{1}(M)$ is the inverse of $\psi$. In
particular, if $g$ is Frobenius then there is (locally) a function
$H$ such that $\eta_{i} =\partial_{i}(H)$, for any $i$, and
$\partial_{i}(H)$ is independent of $t^{0}.$\

iii) The metric $g$ is Frobenius on $(M, \circ , e,E)$  if and
only if the conditions from ii) hold and, moreover, $E(\eta_{i})=
(D-2) \eta_{i}$ for any $i.$

\end{thm}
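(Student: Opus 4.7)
The plan is to treat the three parts separately, using Propositions \ref{local-prop} and \ref{corolar-general} together with direct computations in the coordinates provided by Theorem \ref{main-0} i).

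For part i), the point is that the $\epsilon$-isomorphism $TM\to T^{*}M$ transports the multiplication on $TM$ to one on $T^{*}M$ given by $dt^{i}\circ dt^{j} = dt^{i+j-(m-1)}$ (with the convention that $dt^{s}$ vanishes for $s\notin\{0,\ldots,m-1\}$), with unit $dt^{m-1}$. Writing $\psi = \psi_{j}dt^{j}$ and unfolding $\psi\circ\psi$ yields $(\psi\circ\psi)(\partial_{k}) = \sum_{s+t=k+m-1}\psi_{s}\psi_{t}$. The defining equation $g(\partial_{i},\partial_{j})=(\psi\circ\psi)(\partial_{i+j})$ then becomes the triangular system $\sum_{s+t=k+m-1}\psi_{s}\psi_{t}=\eta_{k}$ for $0\leq k\leq m-1$. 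The case $k=m-1$ collapses to $\psi_{m-1}^{2}=\eta_{m-1}$, and for $k<m-1$ the equation is linear in $\psi_{k}$ with leading coefficient $2\psi_{m-1}\neq 0$, so $\psi_{k}$ is determined inductively. The invertibility of $\psi$ is equivalent to $\psi_{m-1}\neq 0$, which holds because $g$ is non-degenerate (the matrix $(\eta_{i+j})$ is anti-triangular with anti-diagonal $\eta_{m-1}$).

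For part ii), the $F$-manifold $(M,\circ,e)$ is a constant $F$-manifold in these coordinates (this is the one-block case of Theorem \ref{main-0}) and $\epsilon$ is a constant, multiplication invariant metric on it, so Proposition \ref{corolar-general} applies. What has to be checked is that the endomorphism $\gamma$ appearing in the statement coincides with the one from Remark \ref{gamma-alternativ}. With $\epsilon_{ij}=\delta_{i+j,m-1}$ one has $\epsilon^{ik}=\delta_{i+k,m-1}$, and the constants of the cotangent multiplication work out to $c_{i}^{st}=\delta_{t,\,i+m-1-s}$ when $i\leq s$ (else $0$). Substituting these into the general formula and renaming $k=m-1-i$ produces exactly the expression stated for $\gamma$, and Proposition \ref{corolar-general} then delivers the Frobenius criterion. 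For the additional assertion, $g$ Frobenius implies $\nabla^{\mathrm{LC}}e=0$; Proposition \ref{local-prop} ii) gives $d(e^{\flat})=0$ together with $\partial_{0}(\eta_{i})=e(\eta_{i})=0$, and Proposition \ref{local-prop} i) produces the potential $H$ with $\eta_{i}=\partial_{i}H$, whose derivatives are then independent of $t^{0}$.

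Finally, part iii) is ii) combined with Proposition \ref{local-prop} iii), which characterizes $L_{E}g=Dg$ by $E(\eta_{i})=(D-2)\eta_{i}$. The main technical obstacle is the bookkeeping in the second paragraph: one must carefully match the two expressions for $\gamma$, keeping track of the range constraints on the summation indices imposed by the nilpotent structure of the multiplications on $TM$ and $T^{*}M$.
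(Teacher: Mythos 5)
Your proposal is correct and follows essentially the same route as the paper, whose proof of this theorem is simply the observation that it follows from Propositions \ref{local-prop} and \ref{corolar-general} together with the formula (\ref{gamma-def-alt}) from Remark \ref{gamma-alternativ}. You have merely supplied the (correct) bookkeeping that the paper leaves implicit: the triangular system determining $\psi$, the computation $\epsilon^{ik}=\delta_{i+k,m-1}$, $c_i^{st}=\delta_{t,\,i+m-1-s}$ for $i\leq s$, and the resulting identification of the two expressions for $\gamma$.
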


\begin{proof} The proof follows from Propositions \ref{local-prop} and
\ref{corolar-general} and relation (\ref{gamma-def-alt}).
\end{proof}

\begin{example}\label{2}{\rm We consider the setting of Theorem \ref{flat-gen}.

i) The metric $\epsilon$ itself is  Frobenius  with $\psi =
dt^{m-1}$ and $\gamma =0.$\

ii) Assume that $m=2$. The $1$-form $\psi$, its inverse $\beta$,
the operator $\gamma$ and $\epsilon (\psi , \psi )$ are given by
\begin{align*}
&\psi  = \frac{1}{2}\eta_{0}(\eta_{1})^{-1/2} dt^{0} +
(\eta_{1})^{1/2} dt^{1},\ \beta = -\frac{1}{2} (\eta_{1})^{-3/2}
\eta_{0}dt^{0} +
(\eta_{1})^{-1/2}dt^{1}\\
&\gamma (\partial_{i}) = \beta_{1}\partial_{1} (\psi_{i})
\partial_{0} + (\beta_{0}\partial_{1}(\psi_{i})+\beta_{1}
\partial_{0}(\psi_{i}))\partial_{1},\ 0\leq i\leq 1\\
& \epsilon (\psi , \psi )= 2 \psi_{0} \psi_{1} = \eta_{0}.
\end{align*}
The generalized Darboux-Egoroff equations reduce to $[ {\mathcal
C}_{1}, L_{\partial_{0}}(\gamma )]=0$. The unit field $e$ is flat
if and only if $\partial_{0}(\eta_{1}) =\partial_{1} (\eta_{0})$
and $\eta_{i}$ are independent of $t^{0}$ ($i=1,2$). Suppose that
$e$ is flat. Then $\eta_{0}$ is constant, the generalized
Darboux-Egoroff equations are automatically satisfied and $\gamma$
is $\epsilon$ symmetric. A metric is Frobenius on $(M, \circ , e)$
if and only if it is of the form $g = \dot{f} (dt^{0} \otimes
dt^{1}+ dt^{1} \otimes dt^{0})$, where $f= f(t^{1})$ and its
derivative $\dot{f}$ (with respect to $t^{1}$) is non-vanishing.
The metric $g$ is Frobenius on $(M, \circ ,e, E)$ if, moreover,
$t^{1} \ddot{f} = (D-2)\dot{f}$, for a constant $D\in
\mathbb{C}.$\

iii) Assume that $m=3$. The $1$-form $\psi= \psi_{j}dt^{j}$, its
inverse $\beta = \beta_{j}dt^{j}$ and $\epsilon (\psi , \psi )$
are given by
\begin{align*}
&\psi = (\frac{1}{2}\eta_{0}(\eta_{2})^{-1/2}-\frac{1}{8}
(\eta_{1})^{2}(\eta_{2})^{-3/2})dt^{0} +
\frac{1}{2}\eta_{1}(\eta_{2})^{-1/2} dt^{1} +(\eta_{2})^{1/2}
dt^{2}\\
&\beta =  ( -\frac{1}{2}\eta_{0} (\eta_{2})^{-3/2}+\frac{3}{8}
(\eta_{1})^{2}(\eta_{2})^{-5/2}) dt^{0} -\frac{1}{2} \eta_{1}
(\eta_{2})^{-3/2} dt^{1} +(\eta_{2})^{-1/2}dt^{2}\\
&\epsilon (\psi , \psi )= 2\psi_{0}\psi_{2} + (\psi_{1})^{2} =
\eta_{0}.
\end{align*}
Suppose that $e$ is flat. Like in the case $m=2$, $\eta_{i}$ are
independent of $t^{0}$ and $\partial_{i}(\eta_{j})
=\partial_{j}(\eta_{i})$, for any $i, j.$ In particular,
$\eta_{0}$ is constant. The operator $\gamma$ is given by: for any
$0\leq i\leq 2$,
$$
\gamma (\partial_{i}) = \partial_{2} (\psi_{i})
\beta_{2}\partial_{0} + (\partial_{2}(\psi_{i}) \beta_{1} +
\partial_{1}(\psi_{i})\beta_{2} )\partial_{1} +
(\beta_{0}\partial_{2}(\psi_{i})+ \beta_{1}
\partial_{1}(\psi_{i}))\partial_{2}.
$$
It is $\epsilon$ symmetric if and only if $\gamma_{10}
=\gamma_{21}$, $\gamma_{00}=\gamma_{22}$ and $\gamma_{01} =
\gamma_{12}$, where $\gamma (\partial_{i}) =
\gamma_{ji}\partial_{j}.$ Suppose that these relations are
satisfied. The generalized Darboux-Egoroff equations become the
highly non-trivial condition
$$
[ {\mathcal C}_{1} ,L_{\partial_{2}} (\gamma )] - [{\mathcal
C}_{2}, L_{\partial_{1}} (\gamma ) ] + [ [ {\mathcal C}_{1},
\gamma ], [{\mathcal C}_{2}, \gamma ]]=0,
$$
which, in terms of $\gamma_{ij}$, gives
\begin{align*}
&\partial_{2} (\gamma_{11} - \gamma_{00} ) -\partial_{1}
(\gamma_{01}) + (\gamma_{01})^{2} - (\gamma_{11} -
\gamma_{00})\gamma_{02}=0\\
&\partial_{2}(\gamma_{01}) -\partial_{1} (\gamma_{02}) -
\gamma_{02}\gamma_{01}=0\\
&\partial_{2} (\gamma_{02}) + (\gamma_{02})^{2}=0.
\end{align*}}
\end{example}

\section{Infinitesimal symmetries in canonical
coordinates}\label{lie-sim}

\begin{defn}\label{def-inf-dez} An infinitesimal symmetry of an $F$-manifold $(M,
\circ , e ,E)$ is a vector field $X$ which preserves the
multiplication and the Euler field:
$$
L_{X}(\circ ) =0,\quad [X, E]=0.
$$
\end{defn}

Using the Jacobi identity and the general formula $L_{[X, Y]} = [
L_{X}, L_{Y}]$ for the Lie derivative, we obtain that the set
$\mathcal L$ of infinitesimal symmetries of any $F$-manifold is a
subalgebra of the Lie algebra of vector fields. In this section we
compute the Lie algebra $\mathcal L$ of germs of  regular
$F$-manifolds. According to Theorem 2.11 of \cite{hert-book}, an
infinitesimal symmetry of a product $F$-manifold decomposes into a
product of infinitesimal symmetries of the factors. The Lie
algebra $\mathcal L$ decomposes accordingly and, from Theorem
\ref{main-0},  there is no loss of generality to assume that the
germ is the standard model $((\mathbb{C}^{m},0), \circ ,e, E)$,
with coordinates $(t^{0}, \cdots , t^{m-1})$ and $F$-manifold
structure given by (\ref{prima-form-1}) and (\ref{prima-form-2})
(with no index $\alpha$). We begin with the following lemma.

\begin{lem} A vector field $X$ on
$((\mathbb{C}^{m},0), \circ , e , E)$ satisfies $L_{X}(\circ)=0$
if and only if
\begin{align}
\nonumber& [\partial_0, X]=0,\quad  [\partial_1 , X]\circ
\partial_{m-1}=0,\\
\label{cond-circ} &[\partial_i, X]= i\partial_{i-1}\circ
[\partial_1 , X],\ 2\leq i\leq m-1.
\end{align}
\end{lem}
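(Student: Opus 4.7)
The plan is to prove both directions by unpacking the condition $L_X(\circ)=0$ on the canonical basis $\{\partial_i\}$, using that $L_X(\circ)(Y,Z)=[X,Y\circ Z]-[X,Y]\circ Z-Y\circ[X,Z]$ is $\mathcal{O}_M$-bilinear in $(Y,Z)$, so it vanishes identically iff it vanishes on all pairs $(\partial_i,\partial_j)$.

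For necessity, I would feed in special pairs and read off the three conditions. Setting $Y=Z=\partial_0$ and using $\partial_0\circ\partial_0=\partial_0$ together with $e=\partial_0$ being the unit gives $[X,\partial_0]=2[X,\partial_0]$, hence $[\partial_0,X]=0$; this is condition (1). Once this holds, an induction on $i\geq 1$ via $\partial_1\circ\partial_{i-1}=\partial_i$ shows $[\partial_i,X]=i\partial_{i-1}\circ[\partial_1,X]$ for $1\leq i\leq m-1$, which is condition (3). Finally, applying $L_X(\circ)=0$ to $(\partial_1,\partial_{m-1})$, where $\partial_1\circ\partial_{m-1}=\partial_m=0$, and substituting the already-established formula for $[\partial_{m-1},X]$, collapses to $m\,\partial_{m-1}\circ[\partial_1,X]=0$, which is condition (2).

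For sufficiency, I would assume (1)--(3) and verify $L_X(\circ)(\partial_i,\partial_j)=0$ case by case on $i+j$. The vanishing of $[X,\partial_0]$ handles every pair with $\min(i,j)=0$. For $i,j\geq 1$ with $i+j\leq m-1$, condition (3) (extended trivially to $i=1$) reduces both sides to $(i+j)\partial_{i+j-1}\circ[X,\partial_1]$, giving equality. For $i,j\geq 1$ with $i+j\geq m+1$, both $\partial_i\circ\partial_j$ and $\partial_{i+j-1}$ vanish, so there is nothing to check. The only boundary case $i+j=m$ produces $m\,\partial_{m-1}\circ[\partial_1,X]$ on the right, which vanishes exactly by condition (2).

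Everything here is bookkeeping; there is no real obstacle, only a single subtlety: condition (2) is precisely what is needed to close the gap at $i+j=m$, where the product truncates but $\partial_{m-1}$ itself is still nonzero. The only care needed is to treat the degenerate index $i=0$ separately (since condition (3) starts at $i=2$) and to use the extension of (3) to $i=1$ as a trivial identity when running the derivation calculus.
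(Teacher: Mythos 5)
Your proposal is correct and follows essentially the same route as the paper: evaluate $L_X(\circ)$ on the pairs $(\partial_0,\partial_0)$, $(\partial_1,\partial_{j-1})$ and $(\partial_1,\partial_{m-1})$ to extract the three conditions by induction, then verify sufficiency on all remaining pairs by the same bookkeeping. Your case analysis for the converse (in particular isolating the boundary case $i+j=m$ where condition (2) is needed) is exactly the ``easy check'' the paper leaves to the reader.
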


\begin{proof} For any vector field $X$,
$$L_{X}(\circ)(\partial_i,\partial_j) = \begin{cases}
[X,\partial_{i+j}]-[X,\partial_i]\circ\partial_j
-\partial_i\circ [X,\partial_j],\quad i+j\leq m-1,\\
-[X,\partial_i]\circ\partial_j -\partial_i\circ [X,\partial_j]
\quad i+j\geq m.
\end{cases}$$
In particular,
\begin{align*}
&L_X(\circ) (\partial_0,\partial_0) = [\partial_0,X],\\
&L_X(\circ) (\partial_1,\partial_{j-1}) =
[X,\partial_j]-[X,\partial_1]\circ \partial_{j-1}
-\partial_1\circ [X,\partial_{j-1}],\quad 2\leq j\leq m-1,\\
&L_X(\circ) (\partial_1,\partial_{m-1}) =
-[X,\partial_1]\circ\partial_{m-1}-\partial_1\circ
[X,\partial_{m-1}].
\end{align*}
By induction, we obtain that  the right hand side of these
relations vanish if and only if the relations (\ref{cond-circ})
hold. Moreover, if the relations (\ref{cond-circ}) hold, then
$L_{X}(\circ )(\partial_{i},
\partial_{j})=0$, for any $i, j$ (easy check).
\end{proof}

\begin{prop}\label{thm-inf-dez}
The system of vector fields $\{ Y_{1}, \cdots , Y_{m-1}\}$,
defined by
$$
Y_1 := (t^{1}+1) \partial_{1} +
\sum_{j=2}^{m-1}jt^j\partial_j,\quad Y_{k} := \partial_{k-1}\circ
Y_{1},\ 2\leq k\leq m-1,
$$
is a basis of the Lie algebra $\mathcal L$ of infinitesimal
symmetries of the standard model $((\mathbb{C}^{m},0), \circ , e,
E)$ and
\begin{equation}\label{check-direct}
[Y_i,Y_j] = \begin{cases} (i-j)Y_{i+j-1},\quad i+j\leq m,\\
0,\quad  i+j > m.\\
\end{cases}
\end{equation}
\end{prop}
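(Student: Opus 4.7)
I would establish in order: (i) each $Y_k$ lies in $\mathcal L$; (ii) the $Y_k$ are linearly independent; (iii) $\dim\mathcal L = m-1$, so $\{Y_k\}$ is a basis; (iv) the bracket relation \eqref{check-direct}.

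For (i), first write $Y_k$ explicitly: from $Y_k = \partial_{k-1}\circ Y_1$ and the multiplication rule on the standard model,
\[
Y_k = (t^1+1)\,\partial_k + \sum_{j=2}^{m-k} j\,t^j\,\partial_{j+k-1} \qquad (2 \leq k \leq m-1),
\]
with the convention that the sum is empty when $k = m-1$ (so $Y_{m-1} = (t^1+1)\partial_{m-1}$). The three conditions of the preceding lemma are then checked term by term: $Y_k$ has no $\partial_0$-component and its coefficients are independent of $t^0$, giving $[\partial_0, Y_k] = 0$; a short computation gives $[\partial_1, Y_k] = \partial_k$, and $\partial_k\circ \partial_{m-1} = 0$; and for $2 \leq i \leq m-1$, one checks $[\partial_i, Y_k] = i\,\partial_{i+k-1} = i\,\partial_{i-1}\circ\partial_k = i\,\partial_{i-1}\circ[\partial_1, Y_k]$, with both sides vanishing as soon as $i+k > m$. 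The equality $[Y_k, E] = 0$ then follows by matching, for each index $j$, the components $Y_k(E^j)$ and $E(Y_k^j)$, using $E^0 = t^0 + a$, $E^1 = t^1 + 1$, $E^i = t^i$ for $i \geq 2$.

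For (ii), $Y_1(0) = \partial_1$ and, for $k \geq 2$, $Y_k(0) = \partial_{k-1}\circ\partial_1 = \partial_k$; hence $Y_1(0), \ldots, Y_{m-1}(0)$ are linearly independent. For (iii), the key observation is that $X \mapsto X(0)$ is injective on $\mathcal L$: if $X \in \mathcal L$ with $X(0) = 0$, then the flow $\phi_t^X$ is a one-parameter family of automorphisms of the germ fixing $0$, which by Lemma \ref{unicitate-coord} must each be the identity, so $X = 0$. To bound the image, write $X = \sum_i f_i\,\partial_i$; the conditions of the preceding lemma force every $f_i$ to be independent of $t^0$ and impose $\partial_1 f_0 = 0$. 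The $0$-component of $[X, E] = 0$ then reads $f_0 = E(f_0) = \sum_{i=2}^{m-1} t^i\,\partial_i f_0$, whose right-hand side vanishes at $0$, forcing $f_0(0) = 0$. Therefore $X(0) \in \mathrm{Span}\{\partial_1, \ldots, \partial_{m-1}\}$, so $\dim\mathcal L \leq m-1$; combined with (ii), $\dim\mathcal L = m-1$ and $\{Y_k\}_{k=1}^{m-1}$ is a basis.

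Finally, for (iv), $[Y_i, Y_j] \in \mathcal L$ (as $\mathcal L$ is a Lie subalgebra) and is determined by its value at $0$ by injectivity of evaluation. From $Y_l^s(0) = \delta_{ls}$ one has $Y_i(Y_j^k)(0) = (\partial_i Y_j^k)(0)$, and reading off the explicit formula for $Y_j$ gives $(\partial_1 Y_j^k)(0) = \delta_{k,j}$ while, for $i \geq 2$, $(\partial_i Y_j^k)(0) = i\,\delta_{k,\, i+j-1}$ (vanishing whenever $i+j > m$, since then $i+j-1$ exceeds $m-1$). Subtracting symmetrically yields $[Y_i, Y_j](0) = (i-j)\,\partial_{i+j-1}$ when $i+j \leq m$ and $0$ otherwise, which matches $(i-j) Y_{i+j-1}(0)$ or $0$ respectively, proving \eqref{check-direct}. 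The main obstacle is step (iii): one must combine the three lemma conditions with the Euler field constraint $[X, E] = 0$ in just the right way to pin down $f_0(0) = 0$, which is the crux of the dimension upper bound.
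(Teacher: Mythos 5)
Your proof is correct, but it organizes the argument differently from the paper in two places, and the differences are worth recording. For membership $Y_k\in\mathcal L$, the paper avoids your explicit formula for $Y_k$: it checks only that $Y_1\in\mathcal L$ and then derives $Y_k=\partial_{k-1}\circ Y_1\in\mathcal L$ structurally from the $F$-manifold axioms, via $L_{\partial_{k-1}\circ Y_1}(\circ)=\partial_{k-1}\circ L_{Y_1}(\circ)+Y_1\circ L_{\partial_{k-1}}(\circ)$ and the Euler identity for $[E,\partial_{k-1}\circ Y_1]$; this is shorter and works without writing $Y_k$ out, whereas your term-by-term verification is more computational but equally valid. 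The larger divergence is in the spanning step. The paper writes $X=f_0\partial_0+\sum f_kY_k$ and proves $f_0\equiv 0$ globally by a geometric argument: the flow of $X$ preserves the eigenvalue $t^0+a$ of $\mathcal U$, hence the hypersurfaces $\{t^0+a=\mathrm{const}\}$; it then reduces to $X(0)=0$ and invokes Lemma \ref{unicitate-coord}. You instead isolate the observation that evaluation at $0$ is injective on $\mathcal L$ (same use of Lemma \ref{unicitate-coord}) and only need the pointwise statement $f_0(0)=0$, which you extract by elementary bookkeeping from the conditions \eqref{cond-circ} ($f_i$ independent of $t^0$, $\partial_1f_0=0$) together with the $\partial_0$-component of $[X,E]=0$, namely $f_0=E(f_0)=\sum_{i\ge2}t^i\partial_if_0$. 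This buys you a fully local, purely algebraic dimension bound with no appeal to the flow's action on level sets, and as a bonus the injectivity of evaluation lets you read off the bracket relations \eqref{check-direct} from the values $[Y_i,Y_j](0)$ rather than checking them "directly" as the paper does. Both routes are sound; the paper's gives slightly more information along the way (that $f_0$ vanishes identically before normalizing at the origin), while yours is more economical and self-contained.
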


\begin{proof}
It is easy to check that $Y_{1}$ satisfies the relations
(\ref{cond-circ}) and $[E, Y_{1}]=0$, i.e. $Y_{1}$ belongs to
$\mathcal L .$ Using that $Y_{1}\in \mathcal L$, $L_{\partial_{k}}
(\circ )=0$ and that $E$ is an Euler field, we obtain: for any
$k\geq 2$,
\begin{align*}
&[E, Y_{k}] = [ E, \partial_{k-1}\circ Y_{1}] = [E,
\partial_{k-1}] \circ Y_{1} + \partial_{k-1}\circ [E, Y_{1}] +
\partial_{k-1}\circ Y_{1}=0\\
&L_{Y_{k}} (\circ ) = L_{\partial_{k-1}\circ Y_{1}} (\circ ) =
\partial_{k-1} \circ L_{Y_{1}} (\circ ) + Y_{1} \circ
L_{\partial_{k-1}}(\circ )=0.
\end{align*}
We proved that $Y_{k}\in \mathcal L$, for any $k\geq 1.$ Relation
(\ref{check-direct}) can be checked directly.

Consider now an arbitrary vector field $X\in {\mathcal L}$. We
write it as $X= f_{0}\partial_{0} + f_{1}Y_{1} +\cdots + f_{m-1}
Y_{m-1}$, where $f_{k}$ are functions. We will prove that
$f_{0}=0$ and $f_{k}$ are constant, for any $k\geq 1.$ For any $s$
(sufficiently close to 0), $\Phi_s^X$ is an automorphism of the
$F$-manifold. Within the $F$-manifold, the hypersurfaces $\{t\,
|\, t^0+a=const\}$ are the subspaces where the only eigenvalue of
${\mathcal U}= E\circ$, namely $t^0+a$, is constant. As $\Phi_s^X$
is an automorphism which respects multiplication and Euler field,
it does not change this eigenvalue. Therefore the flow  of $X$
respects the hypersurfaces $\{t\, |\, t^0+a=const\}$ and we obtain
that $f_0=0$. Now, subtracting from $X$ a suitable linear
combination of $Y_1,...,Y_{m-1}$ (with constant coefficients), we
can suppose that $f_1(0)=...=f_{m-1}(0)=0$. But then the flow
$\Phi_s^X$, for any $s$, fixes the point $0$, so it is an
automorphism of the germ $((\mathbb{C}^m,0), \circ , e, E)$. By
Lemma \ref{unicitate-coord}, $\Phi_s^X=\mathrm{Id}$. This implies
$X=0$.
\end{proof}

\begin{rem}{\rm  The $F$-manifold in Theorem 2 i),
$(\mathbb{C}^m,\circ,e,E)$ with $\circ,e$ and $E$ given there
(with no index $\alpha$), is regular and globally nilpotent on
$\mathbb{C}\times (\mathbb{C}-\{1\})\times \mathbb{C}^{m-2}$. By
Theorem 2 ii), for any two values $t_1,t_2\in \mathbb{C}\times
(\mathbb{C}-\{1\})\times \mathbb{C}^{m-2}$ with $t^0_1=t^0_2$, the
germs $((\mathbb{C}^m, t_{1}),\circ,e,E)$ and $((\mathbb{C}^m,
t_{2}),\circ,e,E)$ are isomorphic, and the isomorphism is unique.
The flows $\Phi_s^X$ of the infinitesimal vector fields $X$ in
Proposition 30 realize these isomorphisms for nearby germs.}
\end{rem}

\section{Regular $F$-manifolds and meromorphic
connections}\label{merom-sect}

Let $\nabla^{0}$ be a meromorphic connection on the trivial vector
bundle $V^{0} = D \times \mathbb{C}^{n} \rightarrow  D$ (where $D$
is a small disc around the origin in $\mathbb{C}$), with
connection form $\Omega^{0}$ given by (\ref{omega-0}), in the
standard trivialization of $V^{0}$.  We assume that $B_{0}^{o}\in
M_{n}(\mathbb{C})$ is regular. Let $M^{\mathrm{can}}=
M^{\mathrm{can}} (B_{0}^{o}, B_{\infty})$ be the parameter space
of the Malgrange universal deformation $\nabla^{\mathrm{can}}$ of
$\nabla^{0}$ (see Definition \ref{def-para}). Recall that it is
the maximal integrable submanifold of the distribution $\mathcal
D\vert_{W}$, defined by (\ref{rond}), passing through $0$. The
tangent bundle $TM^{\mathrm{can}}$ admits a natural multiplication
$\circ_{\mathrm{can}}$: for any $\Gamma \in M^{\mathrm{can}}$,
$(\circ_{\mathrm{can}})_{\Gamma}$, acting on
$T_{\Gamma}M^{\mathrm{can}} = {\mathcal D}_{\Gamma}\subset
M_{n}(\mathbb{C})$, is the multiplication of matrices (it
preserves ${\mathcal D}_{\Gamma}$). It is clear that
$\circ_{\mathrm{can}}$ is associative, commutative, with unit
field $(\mathrm{Id}_{\mathrm{can}})_{\Gamma } = \mathrm{Id}$ (the
identity matrix), for any $\Gamma \in M^{\mathrm{can}}.$

\begin{prop}\label{model-uni} i) The multiplication $\circ_{\mathrm{can}}$ gives $M^{\mathrm{can}}$ the structure of a (regular) $F$-manifold,
with Euler field
\begin{equation}\label{euler-reg}
(E_{\mathrm{can}})_{\Gamma }:= - (B_{0})_{\Gamma}= - B_{0}^{o} +
\Gamma - [B_{\infty},\Gamma ],\quad  \Gamma \in M^{\mathrm{can}}.
\end{equation}

ii) Conversely, let $(M, \circ ,e,E)$ be a regular $F$-manifold,
$p\in M$, and $-B^{o}_{0}$ the representation of ${\mathcal U}_{p}
: T_{p}M \rightarrow T_{p}M$, ${\mathcal U}_{p}(X) = X\circ
E_{p}$, in a basis of $T_{p}M.$ Let $B_{\infty}$ be any matrix and
$M^{\mathrm{can}} := M^{\mathrm{can}} (B_{0}^{o}, B_{\infty})$.
The germs  $((M, p),\circ , e, E)$ and
$((M^{\mathrm{can}},0),\circ_{\mathrm{can}} ,
\mathrm{Id}_{\mathrm{can}},E_{\mathrm{can}})$   are isomorphic.
\end{prop}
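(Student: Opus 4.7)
The plan is to prove i) by exhibiting $M^{\mathrm{can}}$ as the base of the Saito bundle attached to $\nabla^{\mathrm{can}}$ and reading off the induced $F$-manifold structure, then to deduce ii) from the uniqueness clause of Theorem \ref{main-0} combined with i).

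For i), I would first apply Proposition \ref{F-mer} to $\nabla^{\mathrm{can}}$, which by (\ref{omega-cann}) is in Birkhoff normal form on the trivial bundle over $M^{\mathrm{can}}\times D$: this equips the trivial bundle $V = M^{\mathrm{can}}\times \mathbb{C}^{n}$ with the Saito bundle structure $(D, {\mathcal C}, B_{0}, -B_{\infty})$, where $\Phi_{X} = {\mathcal C}_{X}$ is the matrix action of $X\in \mathcal{D}_{\Gamma}$ on $\mathbb{C}^{n}$. To upgrade this to an $F$-manifold via Proposition \ref{F-saito}, I need a section $s$ of $V$ for which $I(X) = \Phi_{X}(s) = X\cdot s$ is a bundle isomorphism; equivalently, $s(\Gamma)$ must be a cyclic vector for $(B_{0})_{\Gamma}$. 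Since $B_{0}^{o}$ is regular it admits a cyclic vector $s_{0}\in \mathbb{C}^{n}$, and cyclicity being an open condition, $s_{0}$ remains cyclic for $(B_{0})_{\Gamma}$ in a neighborhood of $\Gamma=0$; I would take $s$ to be the constant section $s_{0}$ there. Implicit in this is the identification $T_{\Gamma}M^{\mathrm{can}} = \mathcal{D}_{\Gamma}$ of dimension $n$, which is justified by the regularity of $(B_{0})_{\Gamma}$.

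The induced structure then follows by direct identification. Matrix multiplication gives $\Phi_{X}\Phi_{Y}(s_{0}) = (XY)s_{0}$, so $\circ_{M}$ coincides with the matrix multiplication $\circ_{\mathrm{can}}$; the unit $I^{-1}(s_{0})$ is the unique $X\in \mathcal{D}_{\Gamma}$ with $Xs_{0} = s_{0}$, namely $\mathrm{Id}$; and the Euler field $E_{M} = -I^{-1}((B_{0})_{\Gamma}s_{0})$ equals $-(B_{0})_{\Gamma}$, matching (\ref{euler-reg}). Regularity of $\mathcal{U}_{\mathrm{can}}$ at every $\Gamma$ holds because multiplication by $-(B_{0})_{\Gamma}$ on the $n$-dimensional polynomial algebra $\mathcal{D}_{\Gamma}$ admits $\mathrm{Id}$ as a cyclic vector. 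Since the multiplication and Euler field supplied by Proposition \ref{F-saito} do not depend on the choice of section, these conclusions extend globally over $M^{\mathrm{can}}$.

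For ii), part i) exhibits $((M^{\mathrm{can}},0), \circ_{\mathrm{can}}, \mathrm{Id}_{\mathrm{can}}, E_{\mathrm{can}})$ as a germ of regular $F$-manifolds whose Euler-multiplication $\mathcal{U}_{\mathrm{can},0}$ is multiplication by $-B_{0}^{o}$ on $\mathcal{D}_{0}$; this endomorphism has the same minimal polynomial as $-B_{0}^{o}$, hence the same Jordan normal form, hence the same conjugacy class as $\mathcal{U}_{p}$ by hypothesis. Theorem \ref{main-0} ii) then delivers the desired isomorphism of germs. The main obstacle is producing the cyclic section $s_{0}$ in order to trigger Proposition \ref{F-saito}; once that is in hand, everything else is bookkeeping about the induced data followed by an appeal to the classification theorem.
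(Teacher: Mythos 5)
Your proposal is correct and follows essentially the same route as the paper: both obtain the Saito bundle from Proposition \ref{F-mer} applied to $\nabla^{\mathrm{can}}$, use a cyclic vector of $B_{0}^{o}$ as a constant primitive section to trigger Proposition \ref{F-saito}, identify the induced multiplication and Euler field with $\circ_{\mathrm{can}}$ and $-(B_{0})_{\Gamma}$, and then deduce ii) from the conjugacy of $({\mathcal U}_{\mathrm{can}})_{0}$ with $-B_{0}^{o}$ together with Theorem \ref{main-0} ii). The only difference is that you spell out the openness-of-cyclicity argument for why $I$ is an isomorphism, which the paper leaves implicit.
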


\begin{proof}
Let $V= M^{\mathrm{can}} \times \mathbb{C}^{n} \rightarrow
M^{\mathrm{can}}$ be the trivial bundle. Elements of $V$ are pairs
$(\Gamma , v)$ where $\Gamma \in M^{\mathrm{can}}$ and $v\in
\mathbb{C}^{n}$. We shall denote by $V_{\Gamma} = \mathbb{C}^{n}$
the fiber of $V$ at $\Gamma\in  M^{\mathrm{can}}.$ From
Proposition \ref{F-mer} and relation (\ref{omega-cann}),
$\nabla^{\mathrm{can}}$ induces a Saito structure $(D , \Phi,
B_{0}, - B_{\infty})$ on $V$, as follows: $D$ is the canonical
flat connection (the constant sections of $V$ are $D$-flat); $\Phi
\in \Omega^{1}(M^{\mathrm{can}},\mathrm{End}( V))$ is given by
$\Phi_{X} =X\in M_{n}(\mathbb{C}) = \mathrm{End}(V_{\Gamma})$, for
any $X\in T_{\Gamma}(M^{\mathrm{can}}) \subset
M_{n}(\mathbb{C}^{n})$ (i.e. for any $v\in
V_{\Gamma}=\mathbb{C}^{n}$, $\Phi_{X}(v) = X(v)$ is the action of
the matrix $X$ on the vector $v$); $(B_{0})_{\Gamma},
(B_{\infty})_{\Gamma}\in \mathrm{End}(V_{\Gamma})$ are given by
$$
(B_{0})_{\Gamma }= B^{o}_{0} - \Gamma + [B_{\infty}, \Gamma ],\
(B_{\infty})_{\Gamma}= B_{\infty}.
$$
Let $v\in \mathbb{C}^{n}$ be a cyclic vector for $B_{0}^{o}$ and
$s\in \Gamma (V)$ the associated constant section. Thus, $s:
M^{\mathrm{can}} \rightarrow V = M^{\mathrm{can}}\times
\mathbb{C}^{n}$, $s(\Gamma ) = (\Gamma , v)$, for any $\Gamma \in
M^{\mathrm{can}}.$ The map
$$
I : TM^{\mathrm{can}}\rightarrow V,\  I(X) :=\Phi_{X}(s)= (\Gamma
, X(v)),\ X\in T_{\Gamma} M^{\mathrm{can}}
$$
is an isomorphism. From the definition of $\circ_{\mathrm{can}}$
and $E_{\mathrm{can}}$, $\Phi_{X\circ_{\mathrm{can}}Y} (s)=
\Phi_{X} \Phi_{Y}(s)$ and $\Phi_{E_{\mathrm{can}}}(s)= -
B_{0}(s)$, i.e. $E_{\mathrm{can}} = - I^{-1} B_{0}(s)$. It follows
that $(\circ_{\mathrm{can}},E_{\mathrm{can}})$ is induced from the
Saito bundle $(V,D , \Phi, B_{0}, - B_{\infty})$, as in
Proposition \ref{F-saito}. In particular, $(M^{\mathrm{can}},
\circ_{\mathrm{can}}, \mathrm{Id}_{\mathrm{can}},
E_{\mathrm{can}})$ is a (regular) $F$-manifold, as required. This
proves  claim {\it i)}.

For claim {\it ii}), let ${\mathcal U}_{\mathrm{can}} \in
\mathrm{End}(TM^{\mathrm{can}})$ be defined by ${\mathcal
U}_{\mathrm{can}} (X):= X\circ_{\mathrm{can}} E_{\mathrm{can}}.$
From Proposition \ref{F-saito}, $({\mathcal
U}_{\mathrm{can}})_{0}$ is conjugated to $- (B_{0})_{0}= -
B_{0}^{o}$. Since $ - B_{0}^{o}$ is the representation of
${\mathcal U}_{p}$ in a basis of $T_{p}M$, $({\mathcal
U}_{\mathrm{can}})_{0}$ and ${\mathcal U}_{p}$ belong to the same
conjugacy class. We conclude with Theorem \ref{main-0}.
\end{proof}

\begin{cor}\label{mer-rel} Any regular $F$-manifold $(M, \circ , e, E)$
is the parameter space of an integrable deformation of a
meromorphic connection on $V^{0} = D\times
\mathbb{C}^{n}\rightarrow D$, in Birkhoff normal form, with a pole
of Poincar\'{e} rank one in the origin.\end{cor}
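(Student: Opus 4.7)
The plan is to derive the corollary as a direct application of Proposition \ref{model-uni} ii), which already establishes the needed local isomorphism with a Malgrange parameter space; the work reduces to assembling the input data for that proposition and observing that its output is exactly the statement of the corollary (understood in the local/germ sense consistent with the introduction of Section \ref{merom-sect}).

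First I would fix a point $p \in M$ and choose any basis of $T_pM$; using that basis, represent the endomorphism ${\mathcal U}_p(X) = X \circ E_p$ of $T_pM$ by a matrix $-B_0^o \in M_n(\mathbb{C})$. By regularity of $(M,\circ,e,E)$ the endomorphism ${\mathcal U}_p$ is regular, hence $B_0^o$ is a regular matrix. Next I would choose any $B_\infty \in M_n(\mathbb{C})$ (the simplest choice being $B_\infty = 0$), and form the connection $\nabla^0$ on $V^0 = D\times\mathbb{C}^n\rightarrow D$ with connection form $\Omega^0 = (B_0^o/\tau + B_\infty)\,d\tau/\tau$ as in (\ref{omega-0}). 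By construction $\nabla^0$ is in Birkhoff normal form with a pole of Poincaré rank one at the origin.

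Because $B_0^o$ is regular, Malgrange's construction (Definition \ref{def-para}) produces the parameter space $M^{\mathrm{can}} = M^{\mathrm{can}}(B_0^o, B_\infty)$ together with the universal integrable deformation $\nabla^{\mathrm{can}}$ of $\nabla^0$. Proposition \ref{model-uni} i) endows $M^{\mathrm{can}}$ with a regular $F$-manifold structure $(\circ_{\mathrm{can}}, \mathrm{Id}_{\mathrm{can}}, E_{\mathrm{can}})$, and Proposition \ref{model-uni} ii), applied to our data $(M,p,B_0^o,B_\infty)$, gives an isomorphism of germs
\[
((M,p),\circ,e,E) \;\cong\; ((M^{\mathrm{can}},0),\circ_{\mathrm{can}},\mathrm{Id}_{\mathrm{can}},E_{\mathrm{can}}).
\]
Transporting $\nabla^{\mathrm{can}}$ via this isomorphism (together with $\mathrm{Id}_D$) yields an integrable deformation of $\nabla^0$ whose parameter space is (a neighborhood of $p$ in) $M$, which is the required conclusion.

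There is essentially no obstacle here: the content of the corollary is already packaged in Proposition \ref{model-uni} ii), so the only thing to verify is that the data needed to invoke that proposition can indeed be produced from an arbitrary regular $F$-manifold and an arbitrary point, which is immediate from the definition of regularity. If one wished a global rather than a local statement, the genuine difficulty would be to patch the local Malgrange models compatibly across $M$; but, consistently with the section's framing (\emph{locally isomorphic to such a parameter space}), the corollary is to be read locally and no such patching is required.
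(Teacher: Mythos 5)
Your proposal is correct and follows exactly the paper's own route: the paper's proof is the one-line "Trivial, from Proposition \ref{model-uni} ii)," and you have simply spelled out the details of assembling the data $(B_0^o, B_\infty)$, invoking the Malgrange construction, and transporting $\nabla^{\mathrm{can}}$ through the germ isomorphism. Your closing remark that the statement is to be read locally is also consistent with the paper's framing.
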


\begin{proof}
Trivial, from Proposition \ref{model-uni} ii).
\end{proof}

\section{Initial conditions for Frobenius
metrics}\label{initial}

In this section we prove an initial condition theorem for
Frobenius metrics on regular $F$-manifolds (see Theorem
\ref{question} below). Our argument relies on Theorem \ref{main-0}
and the theory developed in \cite{hert-man-unfol}. A self
contained proof for the existence of a Frobenius metric with given
initial condition, which avoids the technicalities of
\cite{hert-man-unfol}, will be presented in Section
\ref{appendix}. The following remark justifies the properties of
${\mathcal V}_{p}$ from Theorem \ref{question}.

\begin{rem}{\rm If $(M, \circ , e, E, g)$ is a Frobenius manifold
and $L_{E}(g) = Dg$ then $\nabla^{\mathrm{LC}} E = {\mathcal V}+
\frac{D}{2}\mathrm{Id}$, where $\nabla^{\mathrm{LC}}$ is the
Levi-Civita connection of $g$ and $\mathcal V$ is the $g$
skew-symmetric part of $\nabla^{\mathrm{LC}}E.$ Using $[e, E] = e$
and $\nabla^{\mathrm{LC}} (e)=0$, we obtain ${\mathcal V}(e) =
(1-\frac{D}{2}) e.$}
\end{rem}

Our main result from this section is the following.

\begin{thm}\label{question} Let $(M, \circ , e, E)$ be a regular
$F$-manifold and $p\in M.$ Suppose that $g_{p}\in
S^{2}(T_{p}^{*}M)$ and ${\mathcal V}_{p}\in \mathrm{End}(T_{p}M)$
are given, such that the following conditions are satisfied:\

i) $g_{p}$ is multiplication invariant and non-degenerate;\

ii) ${\mathcal V}_{p}$ is $g_{p}$ skew-symmetric and ${\mathcal
V}_{p}(e_{p}) = (1-\frac{D}{2}) e_{p}$, for $D\in \mathbb{C}$.\

Then $g_{p}$ can be extended to a unique  Frobenius metric $g$ on
the germ $((M, p), \circ , e, E)$, such that
$(\nabla^{\mathrm{LC}} E)\vert_{T_{p}M} =\mathcal
V_{p}+\frac{D}{2}\mathrm{Id}$.
\end{thm}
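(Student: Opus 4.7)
The plan is to reduce the statement, via the structure theory of regular $F$-manifolds developed earlier in the paper, to Theorem 4.5 of \cite{hert-man-unfol}, by transporting the problem to the world of Saito bundles and meromorphic connections. First, using Proposition \ref{model-uni} ii), I would realize the germ $((M,p), \circ, e, E)$ as the germ at the origin of the parameter space $(M^{\mathrm{can}}(B_0^o, B_\infty), \circ_{\mathrm{can}}, \mathrm{Id}_{\mathrm{can}}, E_{\mathrm{can}})$ of a suitable Malgrange universal deformation. The choice of $B_0^o$ is forced (the matrix of $-\mathcal{U}_p$ in some basis of $T_pM$ containing a cyclic vector $v$), but $B_\infty$ is free and should be tuned to encode $\mathcal{V}_p$: concretely, $B_\infty$ is the matrix of $-\mathcal{V}_p$ under the identification $T_pM \cong \mathbb{C}^n$ given by that basis.

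Next, I would translate the initial data to the Saito bundle side. The isomorphism $I_0 \colon T_pM \to V_0 = \mathbb{C}^n$ provided by Proposition \ref{F-saito}, induced by the constant section $s$ corresponding to $v$, transports $g_p$ to a non-degenerate symmetric bilinear form on the central fiber. The condition $\mathcal{V}_p(e_p) = (1 - D/2) e_p$ becomes $R_\infty(s) = qs$ with $q = 1 - D/2$, i.e.\ $s$ is primitive homogeneous. I would then flat-transport the fiber metric to a $D$-parallel metric $g_V$ on the trivial bundle $V = M^{\mathrm{can}} \times \mathbb{C}^n$, so that $\nabla g_V = 0$ holds automatically. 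The main obstacle is verifying the remaining Saito-bundle-with-metric axioms at every point of $M^{\mathrm{can}}$: $(B_0)_\Gamma$ must be $g_V$-self-adjoint, $B_\infty$ must be $g_V$-skew-symmetric, and $\Phi_X$ must be $g_V$-self-adjoint for all $X \in T_\Gamma M^{\mathrm{can}}$. These hold at $\Gamma = 0$ by hypotheses i) and ii), and must be propagated across $M^{\mathrm{can}}$; this propagation is precisely the content of Theorem 4.5 of \cite{hert-man-unfol} (a self-contained argument, tailored to regular $F$-manifolds, is what the appendix is to provide).

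Once the Saito bundle with metric is constructed, the procedure recalled in Subsection \ref{frob-saito} yields a Frobenius metric $g_{\mathrm{can}}(X, Y) := g_V(I(X), I(Y))$ on $(M^{\mathrm{can}}, \circ_{\mathrm{can}}, \mathrm{Id}_{\mathrm{can}}, E_{\mathrm{can}})$; pulling it back through the germ isomorphism of the first step gives the desired Frobenius metric $g$ on $((M,p), \circ, e, E)$. By construction $g\vert_{T_pM} = g_p$, and formula (\ref{der-Euler}), combined with $R_\infty = -B_\infty$ and $q = 1 - D/2$, yields $(\nabla^{\mathrm{LC}} E)\vert_{T_pM} = \mathcal{V}_p + (D/2)\mathrm{Id}$, as required. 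For uniqueness, any Frobenius metric on the germ with the prescribed initial data produces, via the inverse of the Subsection \ref{frob-saito} construction, a Saito-bundle-with-metric structure on $V$ extending the same fiber data; the uniqueness part of Theorem 4.5 of \cite{hert-man-unfol} (reflecting the universality of the Malgrange deformation together with the rigidity statement of Lemma \ref{unicitate-coord}) then forces the two Frobenius metrics to agree on the germ.
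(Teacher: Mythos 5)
Your route is essentially the one the paper follows in its appendix (Section \ref{appendix}): realize the germ as $((M^{\mathrm{can}},0),\circ_{\mathrm{can}},\mathrm{Id}_{\mathrm{can}},E_{\mathrm{can}})$ via Proposition \ref{model-uni} ii), transport $g_{p}$ and ${\mathcal V}_{p}$ to the fibre $V_{0}=\mathbb{C}^{n}$ through the isomorphism $I_{0}$ of Proposition \ref{F-saito}, extend the fibre metric constantly to $g_{V}$, and verify the Saito-bundle-with-metric axioms. The one step you flag as ``the main obstacle'' --- propagating the $g_{V}$-self-adjointness of $(B_{0})_{\Gamma}$ and of $\Phi_{X}$ from $\Gamma=0$ to all of $M^{\mathrm{can}}$ --- is exactly where the appendix does its real work, and it does not need Theorem 4.5 of \cite{hert-man-unfol}: since $B_{0}^{o}$ is $g_{0}$-symmetric and $B_{\infty}$ is $g_{0}$-skew (these are hypotheses i) and ii) read through $I_{0}$), the assignment $\Gamma\mapsto (B_{0})_{\Gamma}=-(B_{0}^{o}+\Gamma+[B_{\infty},\Gamma])$ sends $g_{0}$-symmetric matrices to $g_{0}$-symmetric matrices, so the distribution ${\mathcal D}_{\Gamma}=\mathrm{Span}\{\mathrm{Id},(B_{0})_{\Gamma},\dots,(B_{0})_{\Gamma}^{n-1}\}$ is tangent to the submanifold $M_{n}^{\mathrm{sym}}(\mathbb{C})$ of $g_{0}$-symmetric matrices along that submanifold, and the integral leaf $M^{\mathrm{can}}$ through $0$ therefore stays inside $M_{n}^{\mathrm{sym}}(\mathbb{C})$; this makes every tangent vector $X\in T_{\Gamma}M^{\mathrm{can}}$ (hence every $\Phi_{X}$) and every $(B_{0})_{\Gamma}$ automatically $g_{V}$-symmetric, and the rest of your argument goes through as written. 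So the proposal is sound, but as stated it leaves its central verification as a black box, and the appeal to Theorem 4.5 of \cite{hert-man-unfol} at that spot is slightly circular in spirit: if one is willing to invoke that theorem, the paper's primary proof in Section \ref{initial} is shorter and bypasses the Saito-bundle construction entirely --- it applies Theorem 4.5 of \cite{hert-man-unfol} directly to the linear data $(T_{p}M,{\mathcal U}_{p},{\mathcal V}_{p},g_{p})$ to produce \emph{some} Frobenius germ with the prescribed tangent data, and then uses Theorem \ref{main-0} to identify its underlying $F$-manifold germ with $((M,p),\circ,e,E)$. Your uniqueness argument coincides with the paper's (Theorem 4.5 of \cite{hert-man-unfol} combined with the rigidity Lemma \ref{unicitate-coord}).
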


\begin{proof}
We consider the linear data $(T_{p}M, {\mathcal U}_{p}, {\mathcal
V}_{p}, g_{p})$ (as usual, $\mathcal U_{p}$ is the multiplication
by $E_{p}$). From regularity, $e_{p}$ together with ${\mathcal
U}_{p}^{k}(e_{p})$ ($k\geq 1$), generate $T_{p}M$. Therefore, we
can apply Theorem 4.5 of \cite{hert-man-unfol}, with the Frobenius
type structure reduced to the vector space $(T_{p}M, {\mathcal
U}_{p}, {\mathcal V}_{p}, g_{p})$ and $\tau := e_{p}$ (see also
Remark 4.6 of \cite{hert-man-unfol}). We obtain a germ of
Frobenius manifolds $((\tilde{M},\tilde{p}), \tilde{\circ},
\tilde{e}, \tilde{E}, \tilde{g})$, with $L_{\tilde{E}}
(\tilde{g})= D\tilde{g}$, and an isomorphism
\begin{equation}\label{iso-j}
j: (T_{p}M,e_{p}, {\mathcal U}_{p}, {\mathcal V}_{p}, g_{p})
\rightarrow (T_{\tilde{p}}\tilde{M}, \tilde{e}_{\tilde{p}},
\tilde{\mathcal U}_{\tilde{p}},
(\tilde{\nabla}^{\mathrm{LC}}\tilde{E})\vert_{T_{\tilde{p}}
\tilde{M}}-\frac{D}{2}\mathrm{Id},\tilde{g}_{\tilde{p}})
\end{equation}
(where $\tilde{\mathcal U}_{\tilde{p}}$ is the multiplication by
$\tilde{E}_{\tilde{p}}$ and $\tilde{\nabla}^{\mathrm{LC}}$ is the
Levi-Civita connection of $\tilde{g}$). Since $j(e_{p}) =
\tilde{e}_{\tilde{p}}$ and $j \circ {\mathcal U}_{p} =
\tilde{\mathcal U}_{\tilde{p}} \circ j$, we obtain that
$j(E_{p}^{k}) = \tilde{E}_{\tilde{p}}^{k}$, for any $k\geq 0.$
Since ${\mathcal U}_{p}$ and $\tilde{\mathcal U}_{\tilde{p}}$ are
conjugated, the germs $((M,p), \circ ,e ,E)$ and
$((\tilde{M},\tilde{p}),\tilde{\circ},\tilde{e}, \tilde{E})$ are
isomorphic (from Theorem \ref{main-0}). Let $f: ((M,p), \circ ,e
,E) \rightarrow ((\tilde{M},\tilde{p}),\tilde{\circ},\tilde{e},
\tilde{E})$ be an isomorphism and $g:= f^{*}\tilde{g}$. The metric
$g$ is Frobenius on $((M, p),\circ , e, E)$. Since $f_{*}(e) =
\tilde{e}$, $f_{*} (E) = \tilde{E}$ and $f_{*}$ preserves
multiplications, $f_{*}(E^{k})= \tilde{E}^{k}$, for any $k\geq 0.$
In particular, $(f_{*})_{p}(E^{k}_{p})= \tilde{E}^{k}_{\tilde{p}}$
and hence $(f_{*})_{p}= j.$ It follows that $g\vert_{T_{p}M\times
T_{p}M} = j^{*} (\tilde{g}_{\tilde{p}}) = g_{p}$, i.e. $g$ extends
$g_{p}$. The Levi-Civita connections $\nabla^{\mathrm{LC}}$ and
$\tilde{\nabla}^{\mathrm{LC}}$ are related by
$$
f_{*} \nabla^{\mathrm{LC}}_{X} (Y) =
\tilde{\nabla}^{\mathrm{LC}}_{f_{*}(X)} f_{*}(Y),\quad  X, Y\in
{\mathcal T}_{M}.
$$
Applying this relation to $Y: = E$, using that $f_{*}(E)
=\tilde{E}$, $(f_{*})_{p}= j$ and
$$
((\tilde{\nabla}^{\mathrm{LC}}\tilde{E})
\vert_{T_{\tilde{p}}\tilde{M}}- \frac{D}{2} \mathrm{Id} )\circ j =
j\circ {\mathcal V}_{p}
$$
(from (\ref{iso-j})), we obtain
$$
\nabla^{\mathrm{LC}}_{X_{p}}(E) = j^{-1}
\tilde{\nabla}^{\mathrm{LC}}_{j(X_{p})} (\tilde{E} ) = {\mathcal
V}_{p}(X_{p} ) +\frac{D}{2} X_{p},\ X_{p}\in T_{p}M,
$$
as required. The existence of the extension is proved.

The unicity follows also from Theorem 4.5 of
\cite{hert-man-unfol}. More precisely, from this theorem we know
that any two extensions of $\tilde{g}_{p}$, with the required
properties, are related by an isomorphism of the germ $((M,p),
\circ , e, E)$. But any such  isomorphism is the identity map (see
Lemma \ref{unicitate-coord}). Our claim follows.
\end{proof}

\begin{rem}\label{non-weak}{\rm
In \cite{hert-man} (Chapter 3) it was asked whether there exist
F-manifolds which do not admit, in the neighborhood of any point,
any  Frobenius metric. There are F-manifolds for which the answer
to this question is not known (e.g. some generically semisimple
F-manifolds near points where they are not semisimple). Below we
describe two sources of examples for which the answer is negative.

a) Proposition 5.32 and Remark 5.33 of \cite{hert-book} provide
examples of germs $(M,0)$ of generically semisimple F-manifolds
such that $T_0M$ is a local algebra, but not a Frobenius algebra,
so it does not allow a nondegenerate multiplication invariant
metric. In Proposition 5.32 of \cite{hert-book} the F-manifolds
are 3 dimensional, and $T_0M$ is as an algebra isomorphic to
$\mathbb{C}\{x,y\}/(x^2,xy,y^2)$.

b) There are examples of (globally nilpotent) $F$-manifolds which
do not support any Frobenius metric. Such $F$-manifolds are
described in \cite{teleman}, Sections 2.5.2 and 2.5.3. Recall that
an associative, commutative, with unit multiplication $\circ$ on
the tangent bundle $TM$ of a manifold $M$ defines a (possible
non-reduced) subvariety $Y$ of $T^{*}M$, the spectral cover,  by
the ideal $I = (y^{0}-1, y^{i}y^{j} - \sum_{k}
a_{ij}^{k}(x)y^{k})\subset {\mathcal O}_{T^{*}M}$, where $(x^{i})$
are coordinates on $M$, with $\partial_{0} =e$ the unit field,
$(x^{i}, y^{j})$  are the induced coordinates on $T^{*}M$ and
$a_{ij}^{k}$ are defined by $\partial_{i} \circ \partial_{j} =
a_{ij}^{k} \partial_{k}$. The integrability condition
(\ref{integr}) from the definition of $F$-manifolds is equivalent
to $\{I,I\} \subset I$, where $\{ \cdot , \cdot \}$ is the
canonical Poisson bracket of $T^{*}M$ (see Theorem 2.5 of
\cite{teleman}). The reduced variety $Y_{\mathrm{red}}$, defined
by $\sqrt{I}$, is the support of the Higgs bundle $(TM, {\mathcal
C}_{X}(Y) = X\circ Y)$:
$$
Y_{\mathrm{red}}=\cup_{x\in M} \{ \lambda \in T^{*}_{x}M,\quad
\forall X\in T_{x}M,\ \mathrm{ker} ( {\mathcal C} _{X}-\lambda (X)
\mathrm{id} : T_{x}M \rightarrow T_{x}M)\neq 0\} .
$$
If the F-manifold can be enriched to a Frobenius manifold (even
without Euler field), this induces on the pull back of $T^*M$ to
$\mathbb{C}\times M$  a (T)-structure (in the notation of
\cite{hert}) respectively a holonomic $\mathcal{R}_\mathcal{X}$
module (in the notation of \cite{sabbah-2}, where $X=M$). This is
essentially the construction of the Saito bundle from the
Frobenius manifold, but without the data from the Euler field. A
result of Sabbah (\cite{sabbah-2}, Proposition 1.2.5) on holonomic
$\mathcal{R}_\mathcal{X}$-modules says that the reduced variety
$Y_{\mathrm{red}}$ is Lagrangian, or, equivalently, $\{\sqrt{I} ,
\sqrt{I} \}\subset \sqrt{I}$. The ideals defining the spectral
covers in the examples of $F$-manifolds from \cite{teleman},
mentioned above, do not satisfy this last condition. Thus, these
$F$-manifolds do not support any Frobenius metric.}
\end{rem}

\section{Appendix: proof of Theorem \ref{question} revised}\label{appendix}

As promised in Section \ref{initial}, we develop here an
alternative argument for the existence part in Theorem
\ref{question}. Consider the setting from this theorem. Let
$B_{0}^{o}$ and $B_{\infty}\in M_{n}(\mathbb{C})$ be the matrix
representations of ${\mathcal U}_{p} = ({\mathcal C}_{E})_{p}$ and
$\mathcal V_{p}$ in the basis $\mathcal B := \{ e_{p}, E_{p},
\cdots , E_{p}^{n-1}\}$ of $T_{p}M$ (where $n:= \mathrm{dim}(M)$):
$$
{\mathcal U}_{p}(E_{p}^{i}) = (B^{o}_{0})_{ji}E_{p}^{j},\
{\mathcal V}_{p} (E_{p}^{i}) = (B_{\infty})_{ji} E_{p}^{j}.
$$
For any $0\leq i\leq n-2$,
\begin{equation}\label{componente}
(B^{o}_{0})_{i+1,i}=1,\quad (B^{o}_{0})_{ji}=0,\   j\neq i+1.
\end{equation}
Since ${\mathcal V}_{p}(e_{p}) = ( 1-\frac{D}{2}) e_{p}$,
\begin{equation}\label{b-infty}
(B_{\infty})_{j0} =  (1-\frac{D}{2}) \delta_{0j},\ 0\leq j\leq
n-1.
\end{equation}
From the skew-symmetry of $\mathcal V_{p}$,
\begin{equation}\label{sa-adaug}
(B_{\infty})_{ki}e^{\flat}_{p} (E_{p}^{k+j}) + (B_{\infty})_{kj}
e^{\flat}_{p} (E_{p}^{k+i})=0,\quad 0\leq i, j\leq n-1,
\end{equation}
where $e^{\flat}_{p} (X) = g_{p}(e_{p}, X)$, for any $X\in
T_{p}M$.

Let $M^{\mathrm{can}} := M^{\mathrm{can}} (-B^{o}_{0}, -
B_{\infty})$, with its $F$-manifold structure provided by
Proposition \ref{model-uni}. From this proposition, we know that
there is an isomorphism
\begin{equation}\label{definition-f}
f:( (M,p) ,\circ , e, E) \rightarrow ((M^{\mathrm{can}},0),
\circ_{\mathrm{can}},
\mathrm{Id}_{\mathrm{can}},E_{\mathrm{can}}).
\end{equation}
Recall that the Euler field of $M^{\mathrm{can}}$ is given by
$$
(E_{\mathrm{can}})_{\Gamma}:= B^{o}_{0} + \Gamma + [ B_{\infty},
\Gamma ],\ \Gamma \in M^{\mathrm{can}}.
$$
In particular, $(E_{\mathrm{can}})_{0} = B^{o}_{0}$ and, since
$f_{*}(E^{i}) = (E_{\mathrm{can}})^{i}$, we obtain that
\begin{equation}\label{ultima-relatie}
(f_{*})_{p} (E^{i}_{p}) =(B^{o}_{0})^{i},\quad i\geq 0.
\end{equation}
Let
$$
(g_{\mathrm{can}} )_{0}: T_{0}M^{\mathrm{can}} \times T_{0}
M^{\mathrm{can}} \rightarrow \mathbb{C},\quad
(g_{\mathrm{can}})_{0}:= (f_{*}^{-1}) (g_{p})
$$
be the push-forward metric, given by
\begin{equation}\label{g-can-def}
(g_{\mathrm{can}})_{0} ( (B^{o}_{0})^{i}, (B^{o}_{0})^{j}) = g_{p}
(E_{p}^{i}, E_{p}^{j}) = e^{\flat}_{p} (E_{p}^{i+j}),\quad 0\leq
i, j\leq n-1.
\end{equation}
The endomorphism $({\mathcal U}_{\mathrm{can}})_{0}(X) =
X\circ_{\mathrm{can}} E_{\mathrm{can}}$ of $T_{0}
M^{\mathrm{can}}$ is the multiplication by $B^{o}_{0}\in
M_{n}(\mathbb{C})$ on $T_{0}M^{\mathrm{can}} \subset
M_{n}(\mathbb{C}).$ It is $(g_{\mathrm{can}})_{0}$ symmetric.
Using the isomorphism (\ref{definition-f}), the existence part in
Theorem \ref{question} is a consequence of the following lemma.

\begin{lem}The metric $(g_{\mathrm{can}})_{0}$ defined by (\ref{g-can-def}) admits an extension to a
Frobenius metric $g_{\mathrm{can}}$ on the germ
$((M^{\mathrm{can}}, 0), \circ_{\mathrm{can}},
\mathrm{Id}_{\mathrm{can}}, E_{\mathrm{can}})$, such that
\begin{equation}\label{final-dem}
(D^{\mathrm{LC}}E_{\mathrm{can}})_{0}   = (f_{*})_{p}\circ
{\mathcal V}_{p} \circ (f^{-1}_{*})_{p} + \frac{D}{2}\mathrm{Id}.
\end{equation}
Above $D^{\mathrm{LC}}$ is the Levi-Civita connection of
$g_{\mathrm{can}}$.
\end{lem}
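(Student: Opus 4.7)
The plan is to upgrade the Saito bundle $(V,D,\Phi,B_0,R_\infty)$ on $V=M^{\mathrm{can}}\times\mathbb{C}^n\to M^{\mathrm{can}}$ constructed in the proof of Proposition \ref{model-uni} to a Saito bundle with metric, and then to apply the construction recalled in Subsection \ref{frob-saito} to produce $g_{\mathrm{can}}$. First I identify $V_0=\mathbb{C}^n$ with $T_pM$ in such a way that the standard basis vector $e_i$ corresponds to $E_p^i$; under this identification $B_0^o$ and $B_\infty$ are, by construction, the matrices of $\mathcal{U}_p$ and $\mathcal{V}_p$ in the basis $\mathcal{B}$. I then define $\tilde g$ on $V$ to be the constant (in the standard trivialization) metric whose restriction to $V_0$ is the pullback of $g_p$; equivalently, $\tilde g(e_i,e_j)=e_p^{\flat}(E_p^{i+j})$. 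The conditions $D\tilde g=0$ and $\nabla R_\infty=0$ are then automatic from constancy.

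The symmetry conditions of Definition \ref{saito-def} ii) split into two immediate ones and one substantial one. Multiplication invariance of $g_p$ yields directly that $B_0^o$ is $\tilde g$-symmetric, while the $g_p$-skew-symmetry of $\mathcal{V}_p$ (hypothesis ii) of Theorem \ref{question}) yields that $B_\infty$ is $\tilde g$-skew. What remains is to check that at every $\Gamma\in M^{\mathrm{can}}$ the operator $R_0=(B_0)_\Gamma=-B_0^o-\Gamma-[B_\infty,\Gamma]$ and each $\Phi_X=X$ with $X\in T_\Gamma M^{\mathrm{can}}=\mathcal{D}_\Gamma$ are $\tilde g$-symmetric. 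Since $\mathcal{D}_\Gamma$ is spanned by the powers of $(B_0)_\Gamma$, both requirements reduce to the single statement that $M^{\mathrm{can}}$ lies in the linear subspace $\mathcal{S}\subset M_n(\mathbb{C})$ of $\tilde g$-symmetric matrices. To prove this, I observe that if $\Gamma\in\mathcal{S}$, then $\Gamma^*=\Gamma$ and $B_\infty^*=-B_\infty$, so $[B_\infty,\Gamma]^*=\Gamma^*B_\infty^*-B_\infty^*\Gamma^*=-\Gamma B_\infty+B_\infty\Gamma=[B_\infty,\Gamma]\in\mathcal{S}$; hence $(B_0)_\Gamma\in\mathcal{S}$ and $\mathcal{D}_\Gamma\subset\mathcal{S}$. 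Thus the distribution $\mathcal{D}$ is tangent to $\mathcal{S}$ along $\mathcal{S}$; since $0\in\mathcal{S}$, uniqueness of the maximal integral submanifold through $0$ forces $M^{\mathrm{can}}\subset\mathcal{S}$ in a neighborhood of $0$. This inclusion is the main obstacle of the argument; the rest is bookkeeping.

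For the primitive homogeneous section I take the constant section $s=e_0$ corresponding to $e_p$. By (\ref{b-infty}), $B_\infty e_0=(1-\tfrac{D}{2})e_0$, so $R_\infty s=qs$ with $q=1-\tfrac{D}{2}$. Moreover $e_0$ is cyclic for $B_0^o=-(B_0)_0$, so by openness of cyclicity it remains cyclic for $(B_0)_\Gamma$, and the map $I(X):=X(e_0)$ is a bundle isomorphism $TM^{\mathrm{can}}\to V$ in a neighborhood of $0$. By Proposition \ref{F-saito} and the discussion following it, $g_{\mathrm{can}}(X,Y):=\tilde g(I(X),I(Y))$ is a Frobenius metric on the germ $((M^{\mathrm{can}},0),\circ_{\mathrm{can}},\mathrm{Id}_{\mathrm{can}},E_{\mathrm{can}})$ with $L_{E_{\mathrm{can}}}(g_{\mathrm{can}})=Dg_{\mathrm{can}}$. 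At $\Gamma=0$ one has $I_0((B_0^o)^i)=e_i$, so $g_{\mathrm{can}}((B_0^o)^i,(B_0^o)^j)=\tilde g(e_i,e_j)=e_p^{\flat}(E_p^{i+j})$, which recovers (\ref{g-can-def}). Finally, relation (\ref{der-Euler}) with $q=1-\tfrac{D}{2}$ gives $D^{\mathrm{LC}}E_{\mathrm{can}}=I^{-1}R_\infty I+\tfrac{D}{2}\mathrm{Id}$, and a direct computation at $0$, using $I_0:(B_0^o)^i\mapsto e_i$ and the correspondence $B_\infty\leftrightarrow\mathcal{V}_p$, shows that $I_0^{-1}B_\infty I_0$ and $(f_*)_p\circ\mathcal{V}_p\circ(f_*^{-1})_p$ agree as endomorphisms of $T_0M^{\mathrm{can}}$, establishing (\ref{final-dem}).
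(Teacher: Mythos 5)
Your proposal is correct and follows essentially the same route as the paper's appendix proof: endow $V$ with the constant metric extending $g_p$ (transported via $v_i\leftrightarrow E_p^i$), reduce all the symmetry conditions of a Saito bundle with metric to the inclusion of $M^{\mathrm{can}}$ in the symmetric matrices, prove that inclusion by showing the distribution $\mathcal D$ is tangent to that linear subspace, and then invoke the primitive homogeneous section $s=v_0$ together with (\ref{der-Euler}) and the identification $I_0^{-1}B_\infty I_0=(f_*)_p\circ\mathcal V_p\circ(f_*^{-1})_p$. The only differences are cosmetic (you identify $V_0$ with $T_pM$ directly instead of pushing $(g_{\mathrm{can}})_0$ forward through $I_0$), so there is nothing to add.
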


\begin{proof} We preserve the notation from the proof of
Proposition \ref{model-uni}. Let $V= M^{\mathrm{can}} \times
\mathbb{C}^{n} \rightarrow M^{\mathrm{can}}$ be the trivial bundle
over $M^{\mathrm{can}}= M^{\mathrm{can}} (- B_{0}^{o}, -
B_{\infty})$ and $s \in \Gamma (V)$ the constant section $s(\Gamma
) = (\Gamma , v_{0})$, where $v_{0}:= (1,0,\cdots , 0)\in
\mathbb{C}^{n}$. We denote by $v_{1}:= (0,1,0,\cdots , 0)$,
$v_{2}:= (0,0,1,0,\cdots , 0)$, ..., $v_{n-1} = (0,\cdots , 0,1)$
the remaining standard vectors of $\mathbb{C}^{n}$. The
$F$-manifold structure of $M^{\mathrm{can}}$ is obtained (as
explained in Proposition \ref{F-saito}) from the Saito bundle
$(V,D , \Phi, B_{0}, B_{\infty})$ (defined as in the proof of
Proposition \ref{model-uni}, with $B_{0}^{o}$ replaced by
$-B_{0}^{o}$ and $B_{\infty}$ by $- B_{\infty}$), by means of the
isomorphism
\begin{equation}\label{def-I}
I: T M^{\mathrm{can}} \rightarrow V,\ I (X)=\Phi_{X}(s)=(\Gamma ,
X(v_{0})),\  X\in T_{\Gamma }M^{\mathrm{can}}\subset M_{n}
(\mathbb{C}).
\end{equation}
From (\ref{componente}), $v_{0}$ is a cyclic vector for
$B_{0}^{o}$ and
\begin{equation}\label{a-o-i}
I_{0} : T_{0}M^{\mathrm{can}}\rightarrow V_{0}=\mathbb{C}^{n},\
I_{0}((B^{o}_{0})^{i} )=(B_{0}^{o})^{i}(v_{0}) = v_{i},\quad 0\leq
i\leq n-1.
\end{equation}
Let $g_{0}:= (I^{-1}_{0})^{*}(g_{\mathrm{can}})_{0}\in S^{2}
(V_{0}^{*})$ be the push-forward of $(g_{\mathrm{can}})_{0}\in
S^{2} (T^{*}_{0}M^{\mathrm{can}} )$:
\begin{equation}\label{g-0}
g_{0} (v_{i},v_{j}) := (g_{\mathrm{can}})_{0} (I^{-1}_{0}(v_{i}),
I^{-1}_{0}(v_{j}))= (g_{\mathrm{can}})_{0} ( (B^{o}_{0})^{i},
(B^{o}_{0})^{j}) =e^{\flat}_{p} (E_{p}^{i+j}).
\end{equation}
Since $({\mathcal U}_{\mathrm{can}})_{0}$ is
$(g_{\mathrm{can}})_{0}$ symmetric (as stated before the lemma),
$I_{0} \circ ({\mathcal U}_{\mathrm{can}})_{0} \circ I^{-1}_{0}$
is $g_{0}$ symmetric. But $I_{0} \circ ({\mathcal
U}_{\mathrm{can}})_{0} \circ I^{-1}_{0} = - (B_{0})_{0}$ (see
Proposition \ref{F-saito}). Since $B_{0}\in \mathrm{End}(V)$ is
given by
\begin{equation}\label{given-b-0}
(B_{0})_{\Gamma } = - ( B^{o}_{0}+ \Gamma + [B_{\infty},
\Gamma]),\quad \Gamma \in M^{\mathrm{can}},
\end{equation}
we obtain that $- (B_{0})_{0}= B^{o}_{0}$. Therefore,
$B^{o}_{0}\in M_{n}(\mathbb{C})$ is $g_{0}$ symmetric. From
(\ref{sa-adaug}) and (\ref{g-0}), $B_{\infty}\in
M_{n}(\mathbb{C})$ is $g_{0}$ skew-symmetric. Since $B^{o}_{0}$ is
$g_{0}$ symmetric and $B_{\infty}$ is $g_{0}$ skew-symmetric,
$(B_{0})_{\Gamma}$ is $g_{0}$ symmetric when $\Gamma$ is so.

Let $M^{\mathrm{sym}}_{n}(\mathbb{C})$ be the manifold of $g_{0}$
symmetric matrices. We claim  that $(M^{\mathrm{can}},0) \subset
(M^{\mathrm{sym}}_{n}(\mathbb{C}),0)$. For this, we use the above
observation (namely, $(B_{0})_{\Gamma}\in
M^{\mathrm{sym}}_{n}(\mathbb{C})$ when $\Gamma \in
M^{\mathrm{sym}}_{n}(\mathbb{C})$) and the following general fact
(which can be easily checked): if $\mathcal D$ is an integrable
distribution on a manifold $M$,  $N$ is a submanifold of $M$ such
that ${\mathcal D}\vert_{N}\subset TN$ and $I^{\mathrm{max}}$ is
the maximal integrable submanifold of $\mathcal D$, which contains
$p\in N$, then there is a neighborhood $U$ of $p$ in $M$, such
that $I^{\mathrm{max}}\cap U\subset N\cap U$. Applying this fact
to $M:= W$ (a small open neighborhood of $0\in
M_{n}(\mathbb{C})$), $N:= M^{\mathrm{sym}}_{n}(\mathbb{C})\cap W$
and the distribution $\mathcal D\vert_{W}$ whose maximal
integrable submanifold is $M^{\mathrm{can}}$ (and whose fiber at
$\Gamma \in W$ is the vector space of polynomials in
$(B_{0})_{\Gamma}$, with $(B_{0})_{\Gamma}$ as in
(\ref{given-b-0})), we obtain $(M^{\mathrm{can}},0) \subset
(M^{\mathrm{sym}}_{n}(\mathbb{C}),0)$, as needed.

Let $g_{V}\in S^{2} (V^{*})$ be the constant extension of $g_{0}$
to $V$. It follows  that $(V,D , \Phi, B_{0}, B_{\infty}, g_{V})$
is a Saito bundle with metric (see Definition \ref{saito-def}).
The section $s$ is primitive homogeneous, with $B_{\infty}(s) =
(1-\frac{D}{2}) s$ (we use (\ref{b-infty}); recall that $s$ is the
constant section of $V$, determined by $v_{0} = (1,0,\cdots , 0)
\in \mathbb{C}^{m}$). The metric $g_{\mathrm{can}}(X, Y) := g_{V}(
I(X), I(Y))$ extends $(g_{\mathrm{can}})_{0}$ (from (\ref{g-0})).
From Subsection \ref{frob-saito}, $g_{\mathrm{can}}$ is a
Frobenius metric on $((M^{\mathrm{can}},0) ,\circ_{\mathrm{can}},
\mathrm{Id}_{\mathrm{can}}, E_{\mathrm{can}})$ and
$$
D^{\mathrm{LC}}E_{\mathrm{can}} = I^{-1}  B_{\infty} I
+\frac{D}{2} \mathrm{Id}.
$$
In order to conclude the proof, we need to check that
\begin{equation}\label{last-check}
I^{-1}_{0}  B_{\infty} I_{0} = (f_{*})_{p}\circ {\mathcal
V}_{p}\circ (f^{-1}_{*})_{p}.
\end{equation}
From (\ref{a-o-i}), the left hand side of (\ref{last-check}),
applied to $(B_{0}^{o})^{i}$ (with $0\leq i\leq n-1$), is given by
$$
(I^{-1}_{0}  B_{\infty} I_{0})(B_{0}^{o})^{i}=
I_{0}^{-1}B_{\infty} (v_{i})= (B_{\infty})_{ji} (B_{0}^{o})^{j}.
$$
From (\ref{ultima-relatie}), the right hand side of
(\ref{last-check}), applied to $(B^{o}_{0})^{i}$,  is given by
$$
((f_{*})_{p}\circ {\mathcal V}_{p}\circ (f^{-1}_{*})_{p})
(B_{0}^{o})^{i} = (f_{*})_{p} {\mathcal V}_{p} (E_{p}^{i}) =
(B_{\infty})_{ji} (f_{*})_{p} (E_{p}^{j}) = (B_{\infty})_{ji}
(B_{0}^{o})^{j}.
$$
Relation (\ref{last-check}) follows.
\end{proof}

{\it Liana David}:  Institute of Mathematics 'Simion Stoilow' of the
Romanian Academy, Research Unit 4, Calea Grivitei nr. 21,
Bucharest, Romania; liana.david@imar.ro\\

{\it Claus Hertling}: Lehrstuhl f\"{u}r Mathematik VI, Institut f\"{u}r Mathematik,
Universit\"{a}t  Mannheim,
A5, 6, 68131, Mannheim, Germany;
hertling@math.uni-mannheim.de

\end{document}